\newtheorem{exmp}{Example}
\def\sD{\mathcal{D}}
\def\sN{\mathcal{N}}
\def\bE{\mathbb{E}}
\def\bR{\mathbb{R}}
\def\sI{\mathcal{I}}
\def\sE{\mathcal{E}}
\def\sF{\mathcal{F}}
\def\sI{\mathcal{I}}
\def\snk{\mathcal{S}_k}
\def\dnk{d^k}
\def\lnk{\lambda_k}
\def\mnk{\mu_k}
\def\pnk{P}
\def\tnk{t_k}
\def\argmin{\mbox{argmin}}
\def\rg{\mbox{range}}
\def\sX{\mathcal{X}}
\DeclareMathOperator*{\st}{subject\;to}
\def\rank{\mathop{\hbox{\rm rank}}}
\def\spose#1{\hbox to 0pt{#1\hss}}
\def\text #1{\hbox{\quad#1\quad}}
\def\nthinsp{\mskip -2   mu}
\def\superstar{^{\raise 0.5pt\hbox{$\nthinsp *$}}}
\def\SUPERSTAR{^{\raise 0.5pt\hbox{$*$}}}
\def\lamstarT {\lambda^{\raise 0.5pt\hbox{$\nthinsp *$}T}}
\def\hbar{\skew{4.2}\bar h}
		\def\bkE{{\rm I\kern-.17em E}}
		\def\bkE{\mathbb{E}}
		\def\bk1{{\rm 1\kern-.17em l}}
		\def\bkD{{\rm I\kern-.17em D}}
		\def\bkR{{\rm I\kern-.17em R}}
		\def\bkP{{\rm I\kern-.17em P}}
		\def\bkY{{\bf \kern-.17em Y}}
		\def\bkZ{{\bf \kern-.17em Z}}
		\def\beq{\begin{eqnarray}}
		\def\bc{\begin{center}}
		\def\be{\begin{enumerate}}
		\def\bi{\begin{itemize}}
		\def\bs{\begin{small}}
		\def\bS{\begin{slide}}
		\def\ec{\end{center}}
		\def\ee{\end{enumerate}}
		\def\ei{\end{itemize}}
		\def\es{\end{small}}
		\def\eS{\end{slide}}
		\def\eeq{\end{eqnarray}}
		\def\qed{\quad \vrule height7.5pt width4.17pt depth0pt}
	\def\cp2problem#1#2#3#4{\fbox
		 {\begin{tabular*}{0.9\textwidth}
			{@{}l@{\extracolsep{\fill}}l@{\extracolsep{6pt}}l@{\extracolsep{\fill}}c@{}}
				#1 & & $#4 $ 
			\end{tabular*}}}
\newcommand{\pmat}[1]{\begin{pmatrix} #1 \end{pmatrix}}
		\renewcommand{\emph}[1]{\textbf{#1}}
		\def\bk1{{\rm 1\kern-.17em l}}
		\def\bkD{{\rm I\kern-.17em D}}
		\def\bkR{{\rm I\kern-.17em R}}
		\def\bkP{{\rm I\kern-.17em P}}
		\def\bkZ{{\bf{Z}}}
\newcommand {\beeq}[1]{\begin{equation}\label{#1}}
\newcommand {\eeeq}{\end{equation}}
\newcommand {\bea}{\begin{eqnarray}}
\newcommand {\eea}{\end{eqnarray}}
\def\texitem#1{\par\smallskip\noindent\hangindent 25pt
               \hbox to 25pt {\hss #1 ~}\ignorespaces}
\def\st{\mbox{subject to}}
\def\EA{\mathcal{E}_A}
\def\sinkhorn{\textbf{Sinkhorn}}
\def\arbcd{{\textbf{ARBCD}}}
\def\rbcd{\textbf{RBCD$_0$}}
\def\rbcddb{\textbf{RBCD-DB}}
\def\rbcdsdb{\textbf{RBCD-SDB}}
\DeclareMathOperator{\nul}{null}
\DeclareMathOperator{\supp}{supp}
\DeclareMathOperator{\ve}{vec}
\DeclareMathOperator{\mo}{mod}
\def\rev#1{\textcolor{black}{#1}}
\def\he{\hat{\epsilon}}
\def\ipm{{\textbf{IPM}}}
\begin{document}

\title{Randomized methods for computing optimal transport without regularization and their convergence analysis
}


\author{Yue Xie         \and
        Zhongjian Wang \and 
        Zhiwen Zhang
}


\institute{Yue Xie\at
              Department of Mathematics,
              The University of Hong Kong, Pokfulam Road, Hong Kong SAR, China \\
              \email{yxie21@hku.hk}           
           \and
           Zhongjian Wang \at
              School of Physical and Mathematical Sciences, Nanyang Technological University, 21 Nanyang Link, Singapore 637371 \\
              \email{zhongjian.wang@ntu.edu.sg}
              \and
        Zhiwen Zhang \at 
        Department of Mathematics, The University of Hong Kong, Pokfulam Road, Hong Kong SAR, China \\
        \email{zhangzw@hku.hk}
}

\date{Received: date / Accepted: date}

\maketitle

\begin{abstract}
The optimal transport (OT) problem can be reduced to a linear programming (LP) problem through discretization. In this paper, we introduced the random block coordinate descent (RBCD) methods to directly solve this LP problem. Our approach involves restricting the potentially large-scale optimization problem to small LP subproblems constructed via randomly chosen working sets. By using a random Gauss-Southwell-$q$ rule to select these working sets, we equip the vanilla version of (\rbcd) with almost sure convergence and a linear convergence rate to solve general standard LP problems. To further improve the efficiency of the (\rbcd) method, we explore the special structure of constraints in the OT problems and leverage the theory of linear systems to propose several approaches for refining the random working set selection and accelerating the vanilla method. Inexact versions of the RBCD methods are also discussed. Our preliminary numerical experiments demonstrate that the accelerated random block coordinate descent (\arbcd) method compares well with other solvers including Sinkhorn's algorithm when seeking solutions with relatively high accuracy, and offers the advantage of saving memory. 
\keywords{Optimal transport\and deep particle method\and convex optimization\and random block coordinate descent\and convergence analysis.}
\subclass{65C35\and  68W20\and  90C08\and  90C25}
\end{abstract}

\section{Introduction} \label{sec:introduction.}
\paragraph{Background and motivation} The optimal transport problem was first introduced by Monge in 1781, which aims to find the most cost-efficient way to transport mass from a set of sources to a set of sinks. Later, the theory was modernized and revolutionized by Kantorovich in 1942, who found a key link between optimal transport and linear programming. In recent years, optimal transport has become a popular and powerful tool in data science, especially in image processing, machine learning, and deep learning areas, where it provides a very natural way to compare and interpolate probability distributions. For instance, in generative models \cite{arjovsky2017wasserstein,lei2019geometric,wang2022deepparticle}, a natural penalty function is the Wasserstein distance (a concept closely related to OT) between the data and the generated distribution. In image processing, the optimal transport plan, which minimizes the transportation cost, provides solutions to image registration \cite{haker2004optimal} and seamless copy \cite{perrot2016mapping}.  Apart from data science, in the past three decades, there has been an explosion of research interest in optimal transport because of the deep connections between the optimal transport problems with quadratic cost functions and a diverse class of partial differential equations (PDEs) arising in statistical mechanics and fluid mechanics; see e.g. \cite{brenier1991polar,benamou2000computational,otto2001geometry,jordan1998variational,villani2021topics} for just a few of the most prominent results and references therein.

Inspired by this research progress, we have developed efficient numerical methods to solve multi-scale PDE problems using the optimal transport approach. Specifically, in our recent paper, we proposed a deep particle method for learning and computing invariant measures of parameterized stochastic dynamical systems \cite{wang2022deepparticle}. To achieve this goal, we designed a deep neural network (DNN) to map a uniform distribution (source) to an invariant measure (target),  where the P\'eclet number is an input parameter for the DNN. The network is trained by minimizing the 2-Wasserstein distance ($W_2$) between the measure of network output  $\mu$ and target measure $\nu$. We consider a discrete version of $W_2$ for finitely many samples of $\mu$ and $\nu$, which involves a linear program (LP) optimized over doubly stochastic matrices \cite{sinkhorn1964relationship}.

Solving the LP directly using the interior point method \cite{wright1997primal} is too costly. Motivated by the domain decomposition method \cite{toselli2004domain} in scientific computing, which solves partial differential equations using subroutines that solve problems on subdomains and has the advantage of saving memory (i.e., using the same computational resource, it can compute a larger problem), we devised a mini-batch interior point method. This approach involves sampling smaller sub-matrices while preserving row and column sums. It has proven to be highly efficient and integrates seamlessly with the stochastic gradient descent (SGD) method for overall network training. However, we did not obtain convergence analysis for this mini-batch interior point method in our previous work \cite{wang2022deepparticle}.

The objectives of this paper are twofold. First, we aim to provide rigorous convergence analysis for the mini-batch interior point method presented in \cite{wang2022deepparticle}, with minimal modifications. Second, we seek to enhance the mini-batch selection strategy, thereby achieving improved and more robust performance in computing optimal transport problems. We recognize that the mini-batch interior point method aligns with the random block coordinate descent (RBCD) method in optimization terminology. Specifically, it applies the block coordinate descent (BCD) method to the LP problem directly, selects the working set randomly, and solves subproblems using the primal-dual interior point method \cite{wright1997primal} or any other efficient linear programming solver. Encouraged by the demonstrated efficiency of this approach, we will develop theoretical results for solving LP with RBCD methods and explore various strategies for selecting working sets.

\paragraph{Theorectical contributions} In this work, we first introduce an expected Gauss-Southwell-$q$ rule to guide the selection of the working set. It enables almost sure convergence and a linear convergence rate in expectation when solving a general standard LP. Based on this rule, we develop a vanilla RBCD method - {\rbcd}, which selects the working set with complete randomness. Then, we investigate the special linear system present in the LP formulation of OT. We characterize all the elementary vectors of the null space and provide a strategy for finding the conformal realization of any given vector in the null space at a low computational cost. Based on these findings, we propose various approaches to refine the working set selection and improve the performance of {\rbcd}. A better estimation of the constant in the linear convergence rate is shown. Moreover, we incorporate an acceleration technique inspired by the momentum concept to improve the algorithm's efficiency. Inexact versions of the RBCD methods are also discussed.

\paragraph{Numerical experiments}
We perform numerical experiments to evaluate the performance of the proposed methods. Synthetic data sets of various shapes/dimensions and invariant measures generated from IPM methods are utilized to create distributions. Our experiments first compare different RBCD methods proposed in this paper, demonstrating the benefits of refining working set selection and verifying the effectiveness of the acceleration technique. We also illustrate the gap between theory and practice regarding convergence rate, sparse solutions generated by the proposed RBCD methods, and discuss the choice of subproblem size. In the second set of experiments, we compare the best-performance method, {\arbcd}, with Sinkhorn's algorithm. Preliminary numerical results show that {\arbcd} is comparable to Sinkhorn's algorithm in computation time when seeking solutions with relatively high accuracy. \rev{{\arbcd} is also comparable to a recently proposed interior point inspired algorithm in memory-saving settings.} We also test {\arbcd} on a large-scale OT problem, where Gurobi runs out of memory. This further justifies the memory-saving advantage of {\arbcd}.

\paragraph{Previous research on (R)BCD} BCD and RBCD are well-studied for essentially unconstrained smooth optimization (sometimes allow separable constraints or nonsmooth separable objective functions): \cite{beck2013convergence,gurbuzbalaban2017cyclic,sun2021worst} investigate BCD with cyclic coordinate search; \cite{nesterov2012efficiency,lu2015complexity,richtarik2014iteration} study RBCD to address problems with possibly nonsmooth separable objective functions; other related works include theoretical speedup of RBCD (\cite{richtarik2016parallel,necoara2016parallel}), second-order sketching (\cite{qu2016sdna,berahas2020investigation}). However, much less is known for their convergence properties when applied to problems with nonseparable nonsmooth functions as summands or coupled constraints. To our best knowledge, no one has ever considered using the RBCD to solve general LP before and the related theoretical guarantees are absent. In \cite{necoara2017random}, the authors studied the RBCD method to tackle problems with a convex smooth objective and coupled linear equality constraints $x_1 + x_2 + \hdots + x_N = 0$; a similar algorithm named random sketch descent method \cite{necoara2021randomized} is investigated to solve problems with a general smooth objective and general coupled linear equality constraints $Ax = b$. However, after adding the simple bound constraints $x \ge 0$, the analysis in \cite{necoara2017random,necoara2021randomized} may not work anymore, nor can it be easily generalized. Beck \cite{beck20142} studied a greedy coordinate descent method but focused on a single linear equality constraint and bound constraints. In Paul Tseng and his collaborators' work \cite{tseng2009block,tseng2009coordinate,tseng2010coordinate}, a block coordinate gradient descent method is proposed to solve linearly constrained optimization problems including general LP. In these works, a Gauss-Southwell-$q$ rule is proposed to guide the selection of the working set in each iteration. Therefore, the working set selected in a deterministic fashion can only be decided after solving a quadratic program with a similar problem size as the original one. In contrast, our proposed mini-batch interior point/RBCD method approach selects the working set
through a combination of randomness and low computational cost. Another research direction that addresses separable functions, linearly coupled constraints, and additional separable constraints involves using the alternating direction method of multipliers (ADMM) \cite{chen2016direct,he20121,xie2019si,xie2021tractable}. This method updates blocks of primal variables in a Gauss-Seidal fashion and incorporates multiplier updates as well.


\paragraph{Existing algorithms for OT} \rev{Encouraged by the success in applying Sinkhorn's algorithm to the dual of entropy regularized OT \cite{cuturi2013sinkhorn}, researchers have conducted extensive studies in this area, including other types of regularization \cite{blondel2018smooth}\cite{gasnikov2016efficient}, acceleration \cite{guminov2021combination}\cite{lin2022efficiency} and numerical stability \cite{mandad2017variance}. In \cite{xie2020fast}, a proximal point algorithm (PPA) is considered to solve the discrete LP. The entropy inducing distance function is introduced to contruct the proximal term and each subproblem has the same formulation as the entropy regularized OT. This approach is found to stabilize the numerical computation while maintaining the efficiency of the Sinkhorn's algorithm. In \cite{schmitzer2019stabilized}, techniques such as log-domain stabilization, and epsilon scaling are discussed to further improve the performance of the Sinkhorn's algorithm. The approach of iterative Bregman projection is discussed in \cite{benamou2015iterative}. It is discovered that the Sinkhorn's iteration is equivalent to a special case of this method. In \cite{pmlr-v84-genevay18a}, a notion of Sinkhorn distance is proposed, which allows computable differentiation when serving as the loss function when training generative models. Sinkhorn's algorithm can also be generalized to solve unbalanced OT in \cite{chizat2018scaling}, and applied to solve OT over geometric domains in \cite{solomon2015convolutional}. In \cite{huang2021riemannian}, a Riemannian block coordinate descent method is applied to solve projection robust Wasserstein distance. The problem is to calculate Wasserstein distance after projecting the distributions onto lower-dimensional subspaces. The proposed approach employs entropy regularization, deterministic block coordinate descent, and techniques in Riemannian optimization. A domain decomposition method is considered in \cite{bonafini2021domain}. Unlike our method, the decomposition is deterministic, focusing on one pair of distributions, and the authors try to tackle entropy-regularized OT.}

\rev{Other works that significantly deviate from the entropy regularization framework include \cite{li2018computations}, which computes the Schr\"{o}dinger bridge problem (equivalent to OT with Fisher information regularization), and multiscale strategies such as \cite{gerber2017multiscale},  \cite{liu2022multiscale} and \cite{schmitzer2016sparse}. In \cite{liu2022multiscale}, the problem size is reduced using a multiscale strategy, and a semi-smooth Newton method is applied to solve smaller-scale subproblems. The RBCD method employed in this study is a regularization-free method. As a result, it avoids dealing with inaccurate solutions and numerical stability issues introduced by the regularization term. Furthermore, each subproblem in RBCD is a small-size LP, allowing for flexible resolution choices. Interior-point methods and simplex methods are also revisited and enhanced \cite{zanetti2023interior,wijesinghe2023matrix,natale2021computation,gottschlich2014shortlist}. In particular, the authors of \cite{zanetti2023interior} propose an interior point inspired algorithm with reduced subproblems to address large-scale OT. The authors of \cite{facca2021fast} consider an equivalent formulation of minimizing an energy functional to solve OT over graphs. Backward Euler (equivalent to the proximal point method) is used in the outer loop of the algorithm and Newton-Rapson is used in the inner loop. Other approaches include numerical solution of 
the Monge--Amp{\`e}re equation
\cite{benamou2014numerical,benamou2016monotone}, stochastic gradient descent to resolve OT in discrete, semi-discrete and continuous formulations \cite{genevay2016stochastic}, and an alternating direction method of multipliers (ADMM) approach to solve the more general Wasserstein barycenter \cite{yang2021fast}.}

\paragraph{Organization} The rest of the paper is organized as follows. In Section~\ref{sec:WdistanceOT}, we review the basic idea of optimal transport and Wasserstein distance. In Section~\ref{sec: converg.}, we introduce the expected Gauss-Southwell-$q$ rule and a vanilla RBCD (\rbcd) method for solving general LP problems. An inexact version of {\rbcd} is also discussed. In Section~\ref{sec:RBCD-OT}, we investigate the properties of the linear system in OT and propose several approaches to refine and accelerate the {\rbcd} method. In Section~\ref{sec:num}, preliminary numerical results are presented to demonstrate the performance of our proposed methods. Finally, concluding remarks are made in Section~\ref{sec:conclusion}.

\noindent{\it Notation. } For any matrix $X$, let $X(i,j)$ denote its element in the $i$th column and $j$th row, and let $X(:,j)$ represent its $j$th row vector. For a vector $v$, we usually use superscripts to denote its copies (e.g., $v^k$ in $k$th iteration of an algorithm) and use subscripts to denote its components (e.g., $v_i$); for a scalar, we usually use subscripts to denote its copies. Occasional inconsistent cases will be declared in context. $\mo(k,n)$ means $k$ modulo $n$. For any vector $v$, we define $\supp(v) \triangleq \{ i \in \{1,\hdots,n \} \mid v_i \neq 0 \}$. Given a matrix $X \in \bR^{n \times n}$, we define its vectorization as follows:
\begin{align*} 
    \ve(X) \triangleq (X(:,1)^T,X(:,2)^T,...,X(:,n)^T)^T.
\end{align*}
For any positive integer $k\ge 2$, we denote $[1,k] \triangleq \{1,...,k\}$. ${\bf 1}_{n \times n}$ represents the $n \times n$ matrix of all ones.

\section{Optimal transport problems and Wasserstein distance} \label{sec:WdistanceOT}

	
The Kantorovich formulation of optimal transport can be described as follows,
\begin{align}\label{def:OT}
\inf _{\gamma \in \Gamma(\mu, \nu)} \int_{X \times Y} C(x, y) \mathrm{~d} \gamma(x, y)
	\end{align}
where $\Gamma(\mu,\nu)$ is the set of all measures on $X \times Y$ whose marginal distribution on $X$ is $\mu$ and marginal distribution on $Y$ is $\nu$, $C(x,y)$ is the transportation cost. In this article, we refer to the Kantorovich formulation when we mention optimal transport.

Wasserstein distances are metrics on probability distributions inspired by the problem of optimal mass transport. They measure the minimal effort required to reconfigure the probability mass of one distribution in order to recover the other distribution. They are ubiquitous in mathematics, especially in fluid mechanics, PDEs, optimal transport, and probability theory \cite{villani2021topics}. One can define the $p$-Wasserstein distance between probability measures $\mu$ and $\nu$ on a metric space $Y$ with distance function $dist$ by 

	\begin{align}\label{def:p-Wdistance}
	W_{p}(\mu, \nu):=\left(\inf _{\gamma \in \Gamma(\mu, \nu)} \int_{Y \times Y} dist (\tilde y, y)^{p} \mathrm{~d} \gamma(\tilde y, y)\right)^{1 / p}
	\end{align}
	where $\Gamma(\mu, \nu)$ is the set of probability measures $\gamma$ on $Y\times Y$ satisfying $\gamma(A\times Y)=\mu(A)$ and $\gamma(Y\times B)=\nu(B)$ for all Borel subsets $A,B \subset Y$. Elements $\gamma \in \Gamma(\mu, \nu)$ are called couplings of the measures $\mu$ and $\nu$, i.e., joint distributions on $Y\times Y$ with marginals $\mu$ and $\nu$ on each axis. $p$-Wasserstein distance is a special case of optimal transport when $X = Y$ and the cost function $c(\tilde y,y)=dist(\tilde y,y)^{p}$.
	
	In the discrete case, the definition \eqref{def:p-Wdistance} has a simple intuitive interpretation: given a $\gamma \in \Gamma(\mu, \nu)$ and any pair of locations $(\tilde y,y)$, the value of $\gamma(\tilde y,y)$ tells us what proportion of $\mu$ mass at $\tilde y$ should be transferred to $y$, in order to reconfigure $\mu$ into $\nu$. Computing the effort of moving a unit of mass from $\tilde y$ to $y$ by $dist(\tilde y,y)^{p}$ yields the interpretation of $W_{p}(\mu, \nu)$ as the minimal effort required to reconfigure $\mu$ mass distribution into that of $\nu$. 
	
	In a practical setting \cite{COTFNT}, referred to as a point cloud, the closed-form solution of $\mu$ and $\nu$ may be unknown, instead only $n$ independent and identically distributed (i.i.d.) samples of $\mu$ and $n$  i.i.d. samples of $\nu$ are available. In further discussion, $n$ refers to the size of the problem. We approximate the probability measures $\mu$ and $\nu$ by empirical distribution functions:
	
	\begin{align}\label{def:empiricalPDF}
	\mu=\frac{1}{n}\sum_{i=1}^{n}\delta_{\tilde y^i} \quad \text{and}\quad  
	\nu=\frac{1}{n}\sum_{j=1}^{n}\delta_{y^j},
	\end{align}
	where $\delta_x$ is the Dirac measure. Any element in $\Gamma(\mu,\nu)$ can clearly be represented by a transition matrix, denoted as  $\gamma=(\gamma_{i,j})_{i,j}$ satisfying:   
	
	\begin{align}
	\label{def:bistochasticity}
		\gamma_{i,j} \geq 0; \quad \quad
	 \forall j, ~\sum_{i=1}^{n}\gamma_{i,j}=\frac{1}{n};
	 \quad \quad  
	 \forall i, ~\sum_{j=1}^{n}\gamma_{i,j}=\frac{1}{n}.
	\end{align}	
	Then $\gamma_{i,j}$ means the mass of $\tilde y^i$ that is transferring to $y^j$.
	
	We denote all matrices in $\bR^{n\times n}$ satisfying \eqref{def:bistochasticity} as $\Gamma^{n}$, then \eqref{def:p-Wdistance} becomes
\begin{align}\label{def:Wdistance_approx}
\hat{W}(f):=\left(\inf_{\gamma \in \Gamma^{n}}\sum_{i,j=1}^{n,n}\, dist(\tilde y^i,y^j)^p \gamma_{i,j}\right)^{1/p}.
\end{align}
\begin{remark}
$\Gamma^{n}$ is in fact the set of $n\times n$ doubly stochastic matrix \cite{sinkhorn1964relationship} divided by $n$. 
\end{remark}

Another practical setting, which is commonly used in fields of computer vision \cite{peleg1989unified,ling2007efficient}, is to compute the Wasserstein distance between two histograms. To compare two grey-scale figures (2D, size $n_0 \times n_0$), we first normalize the grey scale such that the values of cells of each picture sum to one. We denote centers of the cell as $\{y^i\}_{i=1}^{n}$ and $\{\tilde y^i\}_{i=1}^{n}$, then we can use two probability measures to represent the two figures:
\begin{align*}
	\mu=\sum_{i=1}^{n}r_{1,i}\delta_{\tilde y^i} \quad \text{and}\quad  
	\nu=\sum_{j=1}^{n}r_{2,j}\delta_{y^j},
	\end{align*}
	where $r_{1,i},r_{2,j} \ge 0, \forall 1 \le i,j \le n$, $\sum\limits_{i=1}^n r_{1,i}=\sum\limits_{j=1}^n r_{2,j}=1$. The discrete Wasserstein distance \eqref{def:Wdistance_approx} keeps the same form while the transition matrix follows different constraints:
		\begin{align}
	\label{def:bistochasticityEMD}
		\gamma_{i,j} \geq 0; \quad \quad
	 \forall j, ~\sum_{i=1}^{n}\gamma_{i,j}=r_{2,j};
	 \quad \quad  
	 \forall i, ~\sum_{j=1}^{n}\gamma_{i,j}=r_{1,i}.
	\end{align}	

Note that in both settings, the computation of Wasserstein distance is reduced to an LP, i.e.,
\begin{align}\label{LP-OT}
    \begin{array}{rl}
        \min & \sum\limits_{1 \le i,j \le n} C_{i,j}\gamma_{i,j} \\
        \st & \sum\limits_{j=1}^n \gamma_{i,j} = r_{1,i}, \quad \sum\limits_{j=1}^n \gamma_{i,j} = r_{2,i}, \quad \gamma_{i,j} \ge 0,
    \end{array}
\end{align}
where $r^1 \triangleq (r_{1,1},...,r_{1,n})^T$ and $r^2 \triangleq (r_{2,1},...,r_{2,n})^T$ are two probability distributions, and $C_{i,j} = dist(\tilde x^i,x^j)^p$. More generally, we can let $r^1$ and $r^2$ be two nonnegative vectors and $C_{i,j} = C(\tilde{y}^i, y^j)$ be any appropriate transportation cost from $\tilde{y}^i$ to $y^j$, so \eqref{LP-OT} also captures the discrete OT.

However, when the number of particles $n$ becomes large, the number of variables (entries of $\gamma$) scales like $n^2$, which leads to costly computation. Therefore, we will discuss random block coordinate descent methods to keep the computational workload in each iteration reasonable.

\section{Random block coordinate descent for standard LP} \label{sec: converg.} In this section, we first generalize the LP problem \eqref{LP-OT} to a standard LP (see Eq.\eqref{linprog}). Then we propose a random block coordinate descent algorithm for resolution. Its almost sure convergence and linear convergence rate in expectation are analyzed. \rev{Last we introduce an inexact version that is implementable.}

We consider the following standard LP problem:
\begin{align} \label{linprog}
\begin{aligned}
\min_{x \in \bR^N} \quad & c^T x \\
\st \quad & Ax = b,\; x \ge 0,
\end{aligned}
\end{align}
where $A \in \bR^{M \times N}$, $b \in \bR^M$, $c \in \bR^N$, hence $M$ is the number of constraint and $N$ is the total degree of freedom. Assume throughout that $M \le N$. Suppose that $\sN \triangleq \{1,\hdots,N\}$ and denote $\sX \triangleq \{ x \in \bR^N \mid Ax = b, x \ge 0 \}$ as the feasible set. Assume that \eqref{linprog} is finite and has an optimal solution. For any $x \in \sX$ and $\sI \subseteq \sN$, denote
\begin{align}
\label{def: setD}
    \sD(x;\sI) & \triangleq \mbox{argmin}_{d \in \bR^N} \left\{ c^T d \mid x + d \ge 0, Ad = 0, d_i = 0, \forall i \in \sN \setminus \sI \right\}. \\
    \label{def: sd}
    q(x;\sI) & \triangleq \min_{d \in \bR^N} \left\{ c^T d \mid x + d \ge 0, Ad = 0, d_i = 0, \forall i \in \sN \setminus \sI \right\}.
\end{align}
Namely, $\sD(x;\sI)$ is the optimal solution set of the linear program in \eqref{def: setD} and $q(x;\sI)$ is the optimal function value. We have that $q(x;\sI) = c^Td$ for any $d \in \sD(x;\sI)$. Denote $\sX^*$ as the optimal solution set of \eqref{linprog}. Then the following equations hold for any $x \in \sX$:
\begin{align}
    \label{eq:Xstar}
    \sX^* & = x + \sD(x;\sN), \\ 
    \label{eq:qN}
    q(x;\sN) & = c^T x^* - c^T x, \quad \forall x^* \in \sX^*.
\end{align}
{\color{black}
\begin{remark}\label{rmk:negative}
    It is worthy to mention that since $d=0$ follows the conditions in \eqref{def: sd}, $\forall (x,\sI)$, $q(x;\sI)\leq 0$. Furthermore, $ q(x;\sN)\leq q(x;\sI)$.
\end{remark}
}
Consider the block coordinate descent (BCD) method for \eqref{linprog}:
\begin{align}\label{alg: bcd}
\begin{aligned}
& \mbox{find}\; d^k \in \sD(x^k,\sI_k), \\
& x^{k+1} := x^k + d^k,
\end{aligned}
\end{align}
where $\sI_k \subset \sN$ is the working set chosen at iteration $k$. Next, we describe several approaches to select the working set $\sI_k$.
\paragraph{Gauss-Southwell-q rule}
Motivated by the {\it Gauss-Southwell-q} rule introduced in \cite{tseng2009coordinate}, we desire to select $\sI_k$ such that
\begin{align}\label{GSq}
    q(x^k;\sI_k) \le v q(x^k;\sN),
\end{align}
for some constant $v \in (0,1]$. Note that by \eqref{eq:qN}, we have
\begin{align}\label{qkN}
    q(x^k;\sN) = c^T (x^* - x^k),
\end{align}
where $x^*$ is an optimal solution of \eqref{linprog}. Therefore, \eqref{def: sd}-\eqref{qkN} imply that
\begin{align}\notag
c^T d^k  & \le vc^T ( x^* - x^k ) \\
\notag
\overset{\eqref{alg: bcd}}{\implies} c^T (x^{k+1} - x^k) & \le vc^T ( x^* - x^k ) \\
\label{recurs-ineq0}
\implies c^T (x^{k+1} - x^*) & \le (1-v)c^T ( x^k -  x^* ).
\end{align}
\eqref{recurs-ineq0} indicates that the gap of function value decays exponentially with rate $1-v$, as long as we choose $\sI_k$ according to the {Gauss-Southwell-q} rule \eqref{GSq} at each iteration $k$. A trivial choice of $\sI_k$ to satisfy \eqref{GSq} is $\sN$ and $v = 1$. However, this choice results in a potential large-scale subproblem in the BCD method \eqref{alg: bcd}, contradicting the purpose of using BCD. Instead, we should set an upper bound on the cardinality of $\sI_k$, namely, a reasonable batch size to balance the computational effort in each iteration and convergence performance of BCD. Next, we discuss the existence of such an $\sI_k$ given an upper bound $l$ on $|\sI_k|$, which necessitates the following concept.
\begin{definition}\label{def: cr}
A vector $\bar d \in \bR^N$ is conformal to $d \in \bR^N$ if 
\begin{align*}
    \supp(\bar d) \subseteq \supp(d), \; \bar d_i d_i \ge 0, \forall i \in \sN.
\end{align*}
\end{definition}

The following Theorem confirms the existence of such an $\sI_k$ that satisfies \eqref{GSq}, the proof of which follows closely to \cite[Proposition 6.1]{tseng2009block}.
\begin{theorem}\label{thm: existGsq}
Suppose that $\rank(A)+1 \le N$. Given any $x \in \sX$, $l \in \{ \rank(A)+1,\hdots,N  \}$ and $d \in \sD(x;\sN)$. There exist a set $\sI \in \sN$ satisfying $|\sI| \le l$ and a vector $\bar d \in \nul(A)$ conformal to $d$ such that
\begin{align}
    \label{eq: supp}
    \sI & = \supp(\bar d).\\
    \label{GSq1}
    q(x;\sI) & \le \frac{1}{N-l+1}q(x; \sN).
\end{align}
\end{theorem}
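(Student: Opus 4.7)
The plan is to invoke the \emph{conformal realization theorem} from the theory of linear systems (Rockafellar), decompose the optimal direction $d$ into a conformal sum of pieces whose supports fit into the budget $l$, and then apply a pigeonhole argument on $c^T d$ to extract one piece that achieves the desired fraction of $q(x;\sN)$. This mirrors the strategy of \cite{tseng2009block}, Proposition~6.1.

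First I would record the basic properties of $d$: since $d \in \sD(x;\sN)$, we have $Ad = 0$, $x + d \ge 0$, and $c^T d = q(x;\sN) \le 0$ (Remark~\ref{rmk:negative}). Next, using the elementary-vector structure of $\nul(A)$ — every elementary vector of $\nul(A)$ has support of size at most $\rank(A)+1 \le l$ — I would produce a conformal decomposition
\begin{equation*}
d = \sum_{j=1}^{J} d_j, \quad d_j \in \nul(A),\ d_j \text{ conformal to } d,\ |\supp(d_j)| \le l,
\end{equation*}
and argue that the decomposition can be arranged so that $J \le N - l + 1$. The support bound follows directly from the elementary-vector characterization. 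The count bound is the delicate point: one starts from an elementary conformal decomposition (where support sizes are tight at $\rank(A)+1$) and then merges pieces to exploit the slack $l - \rank(A) - 1 \ge 0$ in the budget, reducing the number of summands until the target $J \le N-l+1$ is reached. This merging uses the fact that the total support $\bigcup_j \supp(d_j) \subseteq \supp(d) \subseteq \sN$ has cardinality at most $N$.

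Once such a decomposition is in hand, I would verify that each $d_j$ is actually feasible for the subproblem defining $q(x; \sI_j)$ with $\sI_j := \supp(d_j)$. The constraints $A d_j = 0$ and $(d_j)_i = 0$ for $i \notin \sI_j$ are immediate. For the sign constraint $x + d_j \ge 0$, I would use that in a conformal decomposition $d = \sum_j d_j$ with all $d_j$ conformal to $d$, each coordinate splits as a sum of same-sign terms, hence $|(d_j)_i| \le |d_i|$; combined with $x + d \ge 0$ this gives $x + d_j \ge 0$. Therefore $q(x;\sI_j) \le c^T d_j$ for every $j$.

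Finally, pigeonhole: since $\sum_{j=1}^J c^T d_j = c^T d = q(x;\sN)$, there exists $j^\star$ with
\begin{equation*}
c^T d_{j^\star} \le \frac{q(x;\sN)}{J} \le \frac{q(x;\sN)}{N-l+1},
\end{equation*}
where the last inequality uses $q(x;\sN) \le 0$ and $J \le N-l+1$. Setting $\bar d := d_{j^\star}$ and $\sI := \sI_{j^\star} = \supp(\bar d)$ yields $|\sI| \le l$, $\bar d \in \nul(A)$ conformal to $d$, and $q(x;\sI) \le c^T \bar d \le \tfrac{1}{N-l+1} q(x;\sN)$, as required. The principal obstacle is the construction in Step~2 — simultaneously enforcing $|\supp(d_j)| \le l$ and $J \le N-l+1$ through a constructive merging of elementary vectors while preserving conformality — since the size and count bounds interact in opposite directions as $l$ varies.
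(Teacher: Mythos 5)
Your overall architecture---conformal decomposition of $d$ into pieces of support at most $l$, feasibility of each piece via the same-sign splitting argument giving $x + d_j \ge 0$, then pigeonhole on $c^T d = \sum_j c^T d_j$ using $q(x;\sN) \le 0$---is exactly the paper's proof, which likewise defers the decomposition to Proposition~6.1 of \cite{tseng2009block}. The feasibility check and the pigeonhole step are correct as you wrote them.

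The gap sits precisely at the point you yourself flag: your proposed mechanism for achieving $J \le N-l+1$, namely \emph{merging} pieces of an elementary conformal decomposition, does not work. Distinct pieces conformal to $d$ can have heavily overlapping supports, so $\sum_j |\supp(d_j)|$ is not bounded by $N$, and pairwise non-mergeability (every union of two supports exceeding $l$) does not cap the number of groups at $N-l+1$: one can have far more than $N-l+1$ distinct subsets of $\supp(d)$, each of size at most $l$, whose pairwise unions all exceed $l$. The construction behind Tseng--Yun's Proposition~6.1 (and behind the paper's proof) is sequential \emph{extraction}, not merging: given the current residual $d'$ (initialized at $d$) with $|\supp(d')| > l$, take an elementary vector $v \in \nul(A)$ conformal to $d'$ (hence to $d$), which automatically satisfies $|\supp(v)| \le \rank(A)+1 \le l$, and subtract $\alpha v$ with $\alpha > 0$ chosen maximal so that $d' - \alpha v$ stays conformal to $d$; this zeroes at least one coordinate of the residual per step, so after at most $|\supp(d)| - l$ extractions the residual has support at most $l$ and is kept as the final piece, yielding $r \le |\supp(d)| - l + 1 \le N - l + 1$ pieces. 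Note the count bound comes from coordinates annihilated in the residual, not from packing supports into the budget; the slack $l - \rank(A) - 1$ you try to exploit plays no role. Finally, handle the degenerate cases separately as the paper does, since your pigeonhole presupposes $J \ge 1$ nontrivial pieces: if $d = 0$ take $\bar d = 0$ and $\sI = \emptyset$; if $0 < |\supp(d)| \le l$ take $\bar d = d$, where \eqref{GSq1} still holds because $q(x;\sI) = q(x;\sN) \le \frac{1}{N-l+1}\, q(x;\sN)$ when $q(x;\sN) \le 0$.
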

\begin{proof}
If $d = 0$, then let $\bar d = 0$ and $\sI = \emptyset$. We have $q(x;\sI) = q(x;\sN) = 0$. Therefore, both \eqref{eq: supp} and \eqref{GSq1} are satisfied. If $d \neq 0$ and $|\supp(d)| \le l$, then let $\bar d = d$. Thus, $\sI = \supp(\bar d)$ satisfies $|\sI| \le l$ and $q(x;\sI) = q(x;\sN)$. 
If $|\supp(d)| > l$, then similar to the discussion in \cite[Proposition 6.1]{tseng2009block}, we have that
\begin{align*}
    d = d^{(1)} + \hdots + d^{(r)},
\end{align*}
for some $r \le |\supp(d)| - l + 1$ and some nonzero $d^{(s)} \in \nul(A)$ conformal to $d$ with $|\supp(d^{(s)})| \le l$, $s = 1,...,r$. Since $|\supp(d)| \le N$, we have $r \le N - l + 1$. Since $Ad^{(s)} = 0$ and $x_i + d^{(s)}_i \ge x_i + d_i \ge 0, \forall s = 1,...,r$ and $\forall i \in \{i \mid d_i < 0\}$, we have that $x + d^{(s)} \in \sX, \forall s = 1,...,r$.  Therefore,
\begin{align*}
    q(x;\sN) & = c^T d = \sum_{s = 1}^r c^T d^{(s)} \ge r \min_{s = 1,\hdots,r} \{ c^T d^{(s)} \}.
\end{align*}
Denote $\bar s \in \mbox{argmin}_{s=1,\hdots,r} \{ c^T d^{(s)} \}$ and let $\sI = \supp(d^{(\bar s)})$, then $|\sI| \le l$ and
\begin{align*}
    q(x;\sN) \ge r c^T d^{(\bar s)} \ge r q(x;\sI) \ge (N-l+1) q(x;\sI),
\end{align*}
{\color{black} where the last  inequalities holds due to $r\leq N-l+1$ and $q(x;\sI)\leq 0$.}

Therefore \eqref{eq: supp} and \eqref{GSq1} hold for this $\sI$ and $\bar d = d^{(\bar s)}$.\qed
\end{proof}

However, it is not clear how to identify the set $\sI$ described in Theorem~\ref{thm: existGsq} with little computational effort for a general $A$. Therefore, we introduced the following.

\paragraph{Expected Gauss-Southwell-q rule}
We introduce randomness in the selection of $\sI_k$ to reduce the potential computation burden in identifying an $\sI_k$ that satisfies \eqref{GSq}. Consider an {\it expected Gauss-Southwell-q rule}:
\begin{align} \label{EGSq}
\bE[ q(x^k;\sI_k) \mid \sF_k ] \le v q(x^k;\sN),
\end{align}
where $v \in (0,1]$ is a constant, and $\sF_k \triangleq \{ x^0, \hdots, x^k \}$ denotes the history of the algorithm. Therefore, using the notations of LP \eqref{linprog} and BCD method \eqref{alg: bcd}:
\begin{align}\notag
\eqref{def: sd}\eqref{qkN}\eqref{EGSq} & \implies \bE[ c^T d^k \mid \sF_k ] \le vc^T ( x^* - x^k )\\
\notag
& \implies \bE[ c^T(x^{k+1} - x^k) \mid \sF_k ] \le vc^T ( x^* - x^k ) \\
\notag
& \implies \bE[c^T (x^{k+1} - x^*) \mid \sF_k] - c^T (x^k - x^*) \le vc^T ( x^* - x^k ) \\
\label{recurs-ineq}
& \implies \bE[c^T (x^{k+1} - x^*) \mid \sF_k] \le (1-v)c^T ( x^k -  x^* ),
\end{align}
where $x^*$ is an optimal solution of \eqref{linprog}. According to \cite[Lemma 10, page 49]{polyak1987introduction}, $c^T(x^k - x^*) \to 0$ almost surely. Moreover, if we take expectations on both sides of \eqref{recurs-ineq},
\begin{align*}
\bE[c^T (x^{k+1} - x^*)] & \le (1-v)\bE[ c^T ( x^k -  x^* ) ] \\
\implies \bE[c^T ( x^k -  x^* )] & \le (1-v)^k \bE[ c^T ( x^0 -  x^* ) ].
\end{align*}
i.e., the expectation of function value gap converges to $0$ exponentially with rate $1-v$. 

\paragraph{Vanilla random block coordinate descent}
Based on the expected Gauss-Southwell-$q$ rule, we formally propose a vanilla random block coordinate descent ({\rbcd}) algorithm (Algorithm~\ref{alg:rbcd}) to solve the LP \eqref{linprog}. Specifically, we choose the working set $\sI_k$ with full randomness, that is, randomly choose an index set of cardinality $l$ out of $\sN$. Then with probability at least $\frac{1}{\binom{N}{l}}$, the index set will be the same as or cover the working set suggested by Theorem~\ref{thm: existGsq}. As a result, \eqref{EGSq} will be satisfied with $v \ge \frac{1}{\binom{N}{l}(N-l+1)}$.

\begin{algorithm}[H]\caption{Vanilla random block coordinate descent ({\rbcd})}\label{alg:rbcd}
\begin{algorithmic}
\STATE {\bf (Initialization)} Choose feasible $x^0 \in \bR^N$ and the batch size $l$ such that $\rank(A) + 1 \le l \le N$.
\FOR{$k=0,1,2,\dotsc$}
\STATE {\bf Step 1. } Choose $\sI_k$ uniformly randomly from $\sN$ with $| \sI_k | = l$.
\STATE{\bf Step 2. } 
Find $
d^k \in \sD(x^k;\sI_k).
$
\STATE{\bf Step 3.}
$
\;\; x^{k+1} := x^k + d^k.
$
\ENDFOR
\end{algorithmic}
\end{algorithm}
Based on the previous discussions, Algorithm~\ref{alg:rbcd} generates a sequence $\{ x^k \}$ such that the value of $c^T x^k$  converges to the optimal with probability 1. Moreover, the expectation of the optimality gap converges to $0$ exponentially. It is important to note that $\frac{1}{\binom{N}{l}(N-l+1)}$ is only a loose lower bound of $v$. This bound can become quite small when $N$ grows large due to the binomial coefficient $\binom{N}{l}$. However, in our numerical experiments (c.f. Sec.~\ref{sec:num}), this lower bound is rarely reached. In Section~\ref{sec:RBCD-OT}, we will discuss how to further improve this bound given the specific structure of the OT problem. \rev{Before that, we first investigate an inexact extension of {\rbcd}.}

\paragraph{\bf Inexact extension of Algorithm~\ref{alg:rbcd}.}
\rev{For any $x$ and $\sI \subseteq \sN$, still denote $\sD(x;\sI)$ and $q(x;\sI)$ as in \eqref{def: setD} and \eqref{def: sd}. However, note that now $x$ does not have to be feasible. We consider the inexact version of Algorithm~\ref{alg:rbcd}, where Step 2 in \eqref{alg:rbcd} is only approximately solved. We compute $d^k$ such that for any $d^k_* \in \sD(x^k; \sI_k)$,
\begin{align}\label{def: dk-inex}
c^T d^k \le (1-\epsilon_k) c^T d^k_*, \quad x^k + d^k \ge 0,\quad  \| A d^k \| \le \hat \epsilon_k, \quad d^k_i = 0, \; \forall i \notin \sI_k.
\end{align}
where the inexactness sequences $\{ \epsilon_k \}$ and $\{ \hat \epsilon_k \}$ should be nonnegative. The inexact algorithm is described as follows.
\renewcommand{\thealgorithm}{1(a)} 
\begin{algorithm}[H]\caption{Inexact random block coordinate descent ({IRBCD})}\label{alg:rbcd-inex}
\begin{algorithmic}
\STATE {\bf (Initialization)} Choose feasible $x^0 \in \bR^N$, the batch size $l$ such that $\rank(A) + 1 \le l \le N$, and inexactness sequences $\{ \epsilon_k \}$ and $\{ \hat \epsilon_k \}$.
\FOR{$k=0,1,2,\dotsc$}
\STATE {\bf Step 1. } Choose $\sI_k$ uniformly randomly from $\sN$ with $| \sI_k | = l$.
\STATE{\bf Step 2. } 
Find $
d^k $ such that \eqref{def: dk-inex} holds.
\STATE{\bf Step 3.}
$
\;\; x^{k+1} := x^k + d^k.
$
\ENDFOR
\end{algorithmic}
\end{algorithm}
Next, we analyze the sequence generated by Algorithm~\ref{alg:rbcd-inex} and summarize the results in the following theorem.
\begin{theorem}\label{thm: inex}
Consider Algorithm~\ref{alg:rbcd-inex}. Given $\delta > 0$, suppose that $0 \le \epsilon_k \le \epsilon < 1$ and $\sum_{k=1}^\infty \he_k \le \delta$. Then we have that 
\be
\item $\| Ax^k - b \| \le \delta$ and $x^k \ge 0$ a.s. for any $k \ge 0$,
\item $c^T x^k \ge c^T x^* - \kappa \delta$ for any $k \ge 0$,
\item $\bE[ c^T x^k - (c^T x^* + \kappa \delta) ] \le (1-(1-\epsilon)v)^k\bE[ c^T x^0 - (c^T x^* + \kappa \delta)  ] $, 
\ee
where $\kappa$ is a constant only depending on the matrix $A$ and vector $c$, and $v$ is the constant in the expected Gauss-Southwell-q rule.
\end{theorem}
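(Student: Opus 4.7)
I would prove the three claims in sequence, with parts (2) and (3) both resting on an LP duality sensitivity estimate. Part (1) follows by a simple induction on $k$: the base case uses the feasibility of $x^0$, and the inductive step uses $x^{k+1}=x^k+d^k\ge 0$ from \eqref{def: dk-inex} together with
\begin{align*}
\|Ax^{k+1}-b\| \le \|Ax^k-b\|+\|Ad^k\| \le \sum_{i=0}^{k}\he_i \le \delta.
\end{align*}

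For part (2), I would apply LP weak duality. Let $\mu^*$ be a dual optimum of the original LP chosen at a vertex of $\{\mu:A^T\mu\le c\}$, so $A^T\mu^*\le c$ and $b^T\mu^*=c^Tx^*$. Since $x^k\ge 0$,
\begin{align*}
c^Tx^k \ge (A^T\mu^*)^Tx^k = b^T\mu^*+(Ax^k-b)^T\mu^* \ge c^Tx^* - \|\mu^*\|\,\delta,
\end{align*}
and taking $\kappa$ to be the largest norm among vertices of $\{A^T\mu\le c\}$ (which depends only on $A$ and $c$) yields (2).

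For part (3), the entry point is that $d=0$ is feasible in \eqref{def: sd}, so Remark~\ref{rmk:negative} gives $q(x^k;\sI_k)\le 0$; together with \eqref{def: dk-inex} and $1-\epsilon_k\ge 1-\epsilon>0$ this produces $c^Td^k \le (1-\epsilon)\,q(x^k;\sI_k)$. Crucially, the proof of Theorem~\ref{thm: existGsq} uses only $x\ge 0$ and $x+d\ge 0$ (never $Ax=b$), so the expected Gauss--Southwell-$q$ inequality $\bE[q(x^k;\sI_k)\mid\sF_k]\le v\,q(x^k;\sN)$ remains valid for the possibly infeasible iterate $x^k$. Letting $\tilde x^{*,k}$ be an optimum of $\min\{c^Ty:Ay=Ax^k,\,y\ge 0\}$, a short substitution argument gives $q(x^k;\sN)=c^T\tilde x^{*,k}-c^Tx^k$, and a dual-vertex argument in the opposite direction to part (2) delivers $c^T\tilde x^{*,k}\le c^Tx^*+\kappa\delta$. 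Hence
\begin{align*}
\bE[c^Td^k\mid\sF_k] \le (1-\epsilon)v\,(c^Tx^*+\kappa\delta-c^Tx^k).
\end{align*}
With $e_k:=c^Tx^k-c^Tx^*-\kappa\delta$, this rearranges to $\bE[e_{k+1}\mid\sF_k]\le (1-(1-\epsilon)v)\,e_k$; taking expectations and iterating yields (3).

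\paragraph{Main obstacle.} The most delicate step is the two-sided LP sensitivity estimate with a single constant $\kappa$ depending only on $A$ and $c$: the lower bound used in (2) follows cleanly from weak duality, but the upper bound $c^T\tilde x^{*,k}\le c^Tx^*+\kappa\delta$ required in (3) demands a uniform norm bound on dual optima of the perturbed LPs. The standard remedy is to choose these dual optima at vertices of the shared dual feasible region $\{A^T\mu\le c\}$ and set $\kappa$ to the largest vertex norm, which is finite once $A$ is taken to have full row rank (any redundant equality constraints can be dropped without loss of generality). A routine secondary point is verifying that the proof of Theorem~\ref{thm: existGsq} survives the weakening from $x\in\sX$ to merely $x\ge 0$, which is what allows the Gauss--Southwell-$q$ machinery to be reused inside the inexact recursion.
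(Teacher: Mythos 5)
Your proposal is correct and follows essentially the same route as the paper's proof: a triangle-inequality induction for part (1), an LP sensitivity bound with $\kappa$ equal to the largest norm over vertices of the dual polyhedron $\{p : A^T p \le c\}$ for part (2), and for part (3) the observation that the expected Gauss--Southwell-$q$ rule survives infeasible iterates, combined with the perturbed-LP optimum $\tilde x^{*,k} \in \sD(x^k;\sN) + x^k$, the sign condition $q \le 0$ to absorb $(1-\epsilon_k)$ into $(1-\epsilon)$, and the same one-step recursion on $e_k = c^Tx^k - c^Tx^* - \kappa\delta$. Your only deviations are cosmetic improvements: part (2) uses weak duality directly on the infeasible $x^k$ rather than detouring through $x^{k+1}_*$ as the paper does, and you explicitly verify that Theorem~\ref{thm: existGsq} never uses $Ax=b$ (only $x \ge 0$ and $x+d \ge 0$), a point the paper asserts without justification.
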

\begin{proof}
The proof of 1 is straightforward. Note that for any $k \ge 0$, we have in a.s. sense for any $k \ge 0$,
\begin{align*}
    \| A x^k - b \| = \left\| A x^0 - b + \sum_{t=0}^{k-1} Ad^t \right\| \le \| A x^0 - b \| + \sum_{t=0}^{k-1} \| A d^t \| \overset{\eqref{def: dk-inex}}{\le} \sum_{t=0}^{k-1} \he_t \le \delta.
\end{align*}
$x^k \ge 0$ a.s. is also a direct result of \eqref{def: dk-inex} ($x^k + d^k \ge 0$). Now we prove 2. First we argue that $\sD(x^k; \sN) \neq \emptyset$. This is true by the duality theory in linear programming and the fact that $Ax^k \in S$ and the dual feasibility set $A^T p \le c$ is nonempty. Suppose that $x^{k+1}_* \in \sD(x^k; \sN)$. Then we have
\begin{align*}
    \| A x^{k+1}_* - b \| = \| A x^k - b \| \le \delta.
\end{align*}
We want to estimate $c^T x^{k+1}_*$. Suppose that $p^1,\hdots,p^s$ are all the extreme points of the dual feasible region $\{ p \mid A^T p \le c \}$ and denote $b^k = A x^k$. Then by the theory of LP, we have
\begin{align*}
    c^T x^{k+1}_* = \max_{i = 1,\hdots,s} (b^k)^T p^i  = b^T p^{i^*}
\end{align*}
Note that we also have $c^T x^* = \max_{i=1,\hdots,s} b^T p^i$, then
\begin{align} \label{ineq: objerr}
    c^T x^{k+1}_* - c^T x^*  \le (b^k - b)^T p^{i^*} \le \| b^k - b \| \| p^{i^*} \| \le  \left(\max_{i=1,\hdots,s} \| p^i \| \right) \delta.
\end{align}
Similarly we can show
\begin{align*}
    c^T x^k - c^T x^* \ge c^T x^{k+1}_* - c^T x^* \ge -\left(\max_{i=1,\hdots,s} \| p^i \| \right) \delta,
\end{align*}
which concludes 2 if we let $\kappa \triangleq \max_{i=1,\hdots,s} \| p^i \|$. According to Theorem~\ref{thm: existGsq} and discussion about the exact algorithm, we still have that the expected Gauss-Southwell-q rule \eqref{EGSq} holds with $v \ge \frac{1}{\binom{N}{l}(N-l+1)}$.
Then
\begin{align*}
 & \bE[ c^T d^k \mid \sF_k ] \overset{\eqref{def: dk-inex}}{\le} (1-\epsilon_k)\bE[ c^T d^k_* \mid \sF_k ] \overset{\eqref{EGSq}}{\le} (1-\epsilon_k)vq(x^k;\sN) \\
\implies & \bE[ c^T x^{k+1} - c^T x^k \mid \sF_k] \le (1-\epsilon)v( c^T x^{k+1}_* - c^T x^k ) \\
\overset{\eqref{ineq: objerr}}{\implies} & \bE[ c^T x^{k+1} - c^T x^k \mid \sF_k] \le (1-\epsilon)v( c^T x^* + \kappa \delta - c^T x^k )\\
\implies & \bE[ c^T x^{k+1} - (c^T x^* + \kappa \delta) \mid \sF_k ] \le (1-(1-\epsilon)v)[ c^T x^k - (c^T x^* + \kappa \delta) ], \\
\implies & \bE[ c^T x^k - (c^T x^* + \kappa \delta) ] \le (1-(1-\epsilon)v)^k\bE[ c^T x^0 - (c^T x^* + \kappa \delta)  ], \mbox{ for any } k \ge 0.
\end{align*}\qed
\end{proof}
\begin{remark}
    (i). Theorem~\ref{thm: inex} justifies the inexact algorithm. In particular, if we let $\delta$ be small, then we showed that the expected objective function value sequence also converges to the vicinity of the optimal one linearly. During the implementation, we could either let $\he_k$ to be a sequence proportional to $1/k^{\alpha}, \alpha > 1$, or choose them equally as a sufficiently small number so that their accumulation is also insignificant. Choice of the sequence $\he_k$ is less stringent if we occasionally project the iterates to the feasible region so that the accumulated error is offset. \\
    (ii). In the proof we implicitly assume that the dual feasible region exists at least one extreme point. This can be implied if $A$ has linearly independent rows. In fact, we can eliminate redundant rows of $A$. After this operation, the feasible region remains the same; the analysis and implementation of the algorithms are also similar. \\
    (iii). Inexact versions of the algorithms proposed in Section~\ref{sec:RBCD-OT} are of similar fashion and we will omit detailed and repetitive explanations.
\end{remark}}

\section{Random block coordinate descent and optimal transport}\label{sec:RBCD-OT}

Denote the cost matrix $C \triangleq (C_{i,j})_{i,j}$ in \eqref{LP-OT}. Then calculating the OT between two measures with finite support (problem \eqref{LP-OT}) is a special case of \eqref{linprog}, where $c=\ve(C)$, and $N=n^2$. The constraint matrix $A$ has the following structure:
\begin{align}\label{def: OTcoeff}
    A \triangleq \underbrace{\left(
\begin{array}{llll}
I_n & I_n & \hdots & I_n \\
{\bf 1}_n^T & & & \\
& {\bf 1}_n^T  & & \\
&  &  \ddots & \\
&  &  & {\bf 1}_n^T
\end{array}\right)}_{\text{$n$ blocks}},
\end{align}
where $I_n$ is an $n \times n$ identity matrix, ${\bf 1}_n$ is an $n$ dimensional vector of all $1$'s (then $M = 2n$). Right hand side $b$ in \eqref{linprog} has the form $b \triangleq ((r^1)^T,(r^2)^T)^T$, where $r^1, r^2 \in \bR^n_+$ can be two discrete probability distributions. Next, we discuss the property of matrix $A$ and $\nul(A)$. 

\paragraph{Property of matrix $A$}
A nonzero $d \in \bR^N$ is an {\it elementary vector} of $\nul(A)$ if $d \in \nul(A)$ and there is no nonzero $d' \in \nul(A)$ that is conformal to $d$ and $\supp(d') \neq \supp(d)$. 
According to the definition in \eqref{def: OTcoeff}, we say that a nonzero matrix $X$ is an {\it elementary matrix} of $\nul(A)$ if $\ve(X)$ is an elementary vector of $\nul(A)$. For simplicity, a matrix $M^1$ being conformal to $M^2$ means $\ve(M^1)$ being conformal to $\ve(M^2)$ for the rest of this paper. 

Now we define a set $\EA$:
\begin{align*}
    & X \in \EA \subseteq \bR^{n \times n} \Longleftrightarrow X \neq 0,\; \mbox{and after row and column permutations, X is }\\
    & \mbox{a multiple of one of the following matrices: }\\
    & E^2=\pmat{1 & -1 & & & \\ -1 & 1 & & & \\ && 0 && \\ &&& \ddots & \\ &&&&0}_{n \times n}, 
    E^3=\pmat{1 & & -1 & & & \\ -1 & 1 & & & & \\ & -1 & 1 && & \\ & &  & 0 & & \\ &&& & \ddots & \\ &&&& & 0}_{n \times n},...,\\
    & E^{n-1}=\pmat{1 &  & & & -1 & \\ -1 & 1 & & & & \\ & -1 & 1 && & \\ &&& \ddots && \\ &&& -1 & 1 & \\ &&&&&0}_{n \times n},
     E^{n}=\pmat{1 &  & & & -1 \\ -1 & 1 & & & \\ & -1 & 1 && \\ &&& \ddots & \\ &&& -1 & 1}_{n \times n}.
\end{align*}
First, we state a Lemma about $\EA$, the proof of which is trivial and thus omitted.
\begin{lemma}
Every matrix in $\EA$ is an elementary matrix of $\nul(A)$.
\end{lemma}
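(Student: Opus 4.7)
The plan is to prove that for every $X \in \EA$, any nonzero element of $\nul(A)$ conformal to $X$ must have the same support as $X$ (in fact, must be a nonnegative scalar multiple of $X$), which by definition makes $X$ elementary.

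First I would reduce to the canonical representatives. Row and column permutations of an $n\times n$ matrix $X$ correspond bijectively to permutations of the ``row-sum'' rows and the ``column-sum'' rows of $A$, so they preserve membership in $\nul(A)$ and preserve the conformality relation. Hence it suffices to prove the claim for each $E^k$, $k=2,\ldots,n$, and the result for arbitrary $X \in \EA$ follows by pulling permutations and the scalar factor back through.

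Next, fix $k$ and suppose $X' \in \nul(A)$ is conformal to $E^k$. The support of $E^k$ consists of $k$ positive positions $(i,i)$ (diagonal of the active $k\times k$ block) and $k$ negative positions (the subdiagonal together with the corner $(1,k)$), forming a single cycle in the bipartite row-column incidence graph. Parametrize $X'$ by $\alpha_1,\ldots,\alpha_k \ge 0$ at the positive positions and $-\beta_1,\ldots,-\beta_k$ with $\beta_i\ge 0$ at the negative positions; conformality forces these sign choices and forces $X'$ to vanish on all other entries. Writing the zero-row-sum conditions on the $k$ active rows yields $\alpha_i = \beta_{i-1}$ (indices cyclic), and the zero-column-sum conditions on the $k$ active columns yield $\alpha_i = \beta_i$. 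Chaining these equalities around the cycle gives
\begin{equation*}
\alpha_1 = \beta_1 = \alpha_2 = \beta_2 = \cdots = \alpha_k = \beta_k =: t,
\end{equation*}
so $X' = t\,E^k$ for some $t\ge 0$. Rows and columns outside the active block impose no extra constraints because $X'$ is zero there. Consequently either $X' = 0$ or $\supp(X') = \supp(E^k)$, which is exactly the condition that $E^k$ is an elementary matrix of $\nul(A)$; combined with the first paragraph, the same holds for every $X \in \EA$.

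The only real care needed is bookkeeping of the paper's convention ($X(i,j)$ is the column-$i$, row-$j$ entry, and $\ve$ stacks rows), so that the first $n$ rows of $A$ are correctly identified with ``column sums'' and the last $n$ rows with ``row sums'' of $X$; once this is pinned down, the cycle structure of each $E^k$ collapses the $2k$ linear equations to a one-parameter family, which is the heart of the argument. I do not anticipate a genuine obstacle beyond this indexing check.
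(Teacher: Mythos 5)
The paper gives you nothing to compare against here: the authors dismiss this lemma with ``the proof of which is trivial and thus omitted,'' and your argument is a correct, complete version of the verification they evidently had in mind. Your reductions are all sound --- row/column permutations permute the row-sum and column-sum constraints defining $\nul(A)$ and commute with conformality, nonzero scaling preserves elementarity, and for the canonical $E^k$ the cyclic support makes the $2k$ balance equations chain into $\alpha_1=\beta_1=\alpha_2=\cdots=\alpha_k=\beta_k$, so any conformal element of $\nul(A)$ supported in $\supp(E^k)$ is a nonnegative multiple of $E^k$, which is precisely elementarity. The only cosmetic omission is the one-line observation that $E^k$ itself lies in $\nul(A)$ (each active row and column carries exactly one $+1$ and one $-1$), which your computation already yields at $t=1$; and, as you note, the paper's column-first indexing convention is immaterial since the null-space condition ``all row sums and all column sums vanish'' is symmetric in rows and columns.
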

{\color{black}
Then we show every element in the null space of $A$ is related to a matrix in $\EA$.
\begin{lemma}\label{lem:cr}
For any nonzero $D$ such that $\ve(D) \in \nul(A)$, there exists $X \in \EA$ such that $X$ is conformal to $D$.
\end{lemma}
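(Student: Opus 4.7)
The plan is to interpret the support of $D$ as (the edge set of) a bipartite graph on row-vertices $\{r_1,\dots,r_n\}$ and column-vertices $\{c_1,\dots,c_n\}$, where $(r_i,c_j)$ is an edge iff $D_{ij}\neq 0$, and to locate an even cycle in this graph along which the signs of $D$ alternate. Extracting such a cycle will give the desired $X \in \EA$ after suitable row and column permutations.

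To find the cycle, I will run an alternating walk. Since $D \neq 0$, pick an entry with $D_{i_1 j_1} > 0$ (WLOG, by symmetry). Because the $j_1$-th column of $D$ sums to zero (that is the second block row of $A \ve(D)=0$), there exists $i_2 \neq i_1$ with $D_{i_2 j_1} < 0$. Because the $i_2$-th row of $D$ sums to zero, there exists $j_2 \neq j_1$ with $D_{i_2 j_2} > 0$. Continue alternating: from $(i_s, j_{s-1})$ negative, pick $j_s$ with $D_{i_s j_s}>0$; from $(i_s, j_s)$ positive, pick $i_{s+1}$ with $D_{i_{s+1} j_s}<0$. Both choices are always possible because every row sum and column sum of $D$ vanishes and at least one nonzero entry of the appropriate sign must exist in the relevant row/column.

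Since there are only finitely many rows and columns, the walk eventually revisits some row $i_{s+1}$ equal to an earlier $i_t$ (or analogously a column). Truncate to the first such repetition to obtain a simple alternating closed walk on indices $i_t, j_t, i_{t+1}, j_{t+1}, \dots, i_{s}, j_{s}, i_t$. This yields a bipartite cycle of even length $2k$ with $k \in \{2,\dots,n\}$, and at each of its $2k$ edges the sign of the corresponding entry of $D$ is prescribed by the construction (alternating positive/negative). Define $X$ to be the matrix supported exactly on these $2k$ positions, with entry $+\epsilon$ at each positive-sign edge and $-\epsilon$ at each negative-sign edge, where $\epsilon := \min\{ |D_{ij}| : (i,j)\text{ on the cycle}\}>0$. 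By construction $\supp(X)\subseteq\supp(D)$ and $X_{ij}D_{ij}\ge 0$ entrywise, so $X$ is conformal to $D$ in the sense of Definition~\ref{def: cr}.

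It remains to observe that $X \in \EA$. Permute the rows so that the cycle visits them in the order $i_t, i_{t+1},\dots,i_{t+k-1}$ and permute the columns similarly for $j_t,\dots,j_{t+k-1}$; after these permutations the support of $X$ is exactly $\{(1,1),(2,1),(2,2),(3,2),\dots,(k,k-1),(k,k),(1,k)\}$ padded by zero rows/columns, with $X_{ii}=+\epsilon$ and $X_{i+1,i}=-\epsilon$ (and $X_{1,k}=-\epsilon$), which is precisely $\epsilon\,E^{k}$ (with an extra all-zero block if $k<n$). Hence $X \in \EA$, and the proof is complete. The main obstacle is bookkeeping in the second step: verifying that the first repeated vertex in the alternating walk is reached on a step that preserves the sign-alternation, so that the extracted cycle genuinely has length $2k$ and alternating signs; this is where the equality of walk-length parity with the bipartite structure is used.\qed
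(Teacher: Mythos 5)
Your proposal is correct, but it proves Lemma~\ref{lem:cr} by a genuinely different route than the paper. The paper argues by contradiction and induction on sign patterns: assuming no $X\in\EA$ is conformal to $D$, it shows that after suitable row/column permutations every $k\times k$ principal submatrix of $D$ must exhibit the rigid pattern \eqref{signmatk} (positive diagonal, negative subdiagonal, nonnegative above, nonpositive below), and at $k=n$ the last column then sums to a strictly positive number, contradicting \eqref{vecDinnullA}. You instead construct the conformal elementary matrix directly: viewing $\supp(D)$ as a bipartite graph and using the vanishing row and column sums to sustain a sign-alternating walk, you extract a simple alternating cycle at the first repeated vertex, and the $\pm\epsilon$ matrix supported on that cycle is visibly a permuted multiple of $E^k$, hence in $\EA$. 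The subtlety you flag at the end is genuinely harmless: since in your walk every row-to-column step uses a positive entry and every column-to-row step a negative one, the departing and returning edges at the first repeated vertex automatically carry opposite signs, and the degenerate closure (returning along the departing edge) would force an earlier repetition, so the truncated closed walk is a simple cycle of even length $2k$ with $2\le k\le n$ and proper alternation. Comparing the two: your argument is constructive and shorter, is the standard conformal-circuit argument from network flow theory (cf.\ the Rockafellar reference the paper invokes for Theorem~\ref{thm: cr}), and directly yields the cheap $O(n^2)$ search for a conformal elementary matrix that the paper only asserts in Remark~\ref{rmk:compare_with_tseng}; the paper's induction, by contrast, is self-contained matrix bookkeeping that makes explicit how the failure to contain each $E^t$ forces the sign pattern, at the cost of a longer case analysis. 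One cosmetic note: your $\epsilon=\min\{|D_{ij}|:(i,j)\text{ on the cycle}\}$ is more than the lemma needs (any $\epsilon>0$ gives conformality); that choice only becomes relevant in the support-reduction step of Theorem~\ref{thm: cr}.
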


    \begin{proof}
        We prove this by contradiction and induction. 
We assume a nonzero $D$ such that $\ve(D) \in \nul(A)$ and no $X \in \EA$ is conformal to $D$. 

Note that $\ve(D) \in \nul(A)$ implies 
\begin{align}\label{vecDinnullA}
    \sum_{i=1}^m D(i,\bar j) = \sum_{j=1}^n D(\bar i,j) = 0, \forall \bar i,\bar j.
\end{align} Then without loss of generality, suppose that $$D(1,1) > 0.$$ Otherwise since $D$ is nonzero we can permute row/column to let $D(1,1) \neq 0$ and by \eqref{vecDinnullA} all entries in the first row/column after permutation cannot have the same sign.

By \eqref{vecDinnullA}, the first column of $D$ must have one negative element. Suppose $D(2,1) < 0$ WLOG. The second row of $D$ must have one positive element, so suppose $D(2,2) > 0$ WLOG. Since no $X \in \EA$ is conformal to $D$, we must have $D(1,2) \ge 0$ otherwise $D$ is conformal to $E^2$. Therefore, the $2\times 2$ principal matrix of $D$ has the following sign arrangement (after appropriate row/column permutations),
\begin{align*}
    \pmat{+ & +/0 \\ - & +},
\end{align*}
where we use $+$, $+/0$, $-$, and $-/0$ to indicate that the corresponding entry is positive, nonnegative, negative, and nonpositive respectively. 

If $n = 2$, then the above pattern is impossible as the first column sums to some positive number, leading to a contradiction with \eqref{vecDinnullA}. Suppose that $n \ge 3$. For math induction, we assume  that,

\noindent $(\mathbf{H}^k):$ after appropriate row/column permutations, the $k \times k$ principal matrix of $D$ has the following sign arrangement ($2 \le k \le n-1$), 
\begin{align}\label{signmatk}
    \pmat{+ & +/0 & +/0 & \hdots & +/0 \\ - & + & +/0 & \ddots & \vdots \\ -/0 & - & + & \ddots & +/0 \\ \vdots & \ddots & \ddots & \ddots & +/0 \\ -/0 & \hdots & -/0 & - & + },
\end{align}
namely, 
\begin{align*}
    \left\{\begin{array}{ll}
    D(i,i) > 0, &  1 \le i \le k; \\
        D(i+1,i) < 0, \quad  &  1 \le i \le k-1;\\
        D(i,j) \ge 0,&  1 \le i \le j \le k; \\
        D(i,j) \le 0, &  1 \le j < i \le k.
    \end{array}\right.
\end{align*}

$k$th column of $D$ needs to have at least one negative element, otherwise, it contradicts with \eqref{vecDinnullA}.  WLOG, we suppose $D(k+1,k)<0$.

We now claim that the rest of the elements in the $(k+1)$th row has to be non-positive, namely $D(k+1,i) \le 0$, $\forall i = 1,...,k-1$. Otherwise, let $i_0$ be the largest index in $\{1,\cdots, k-1\}$ such that $D(k+1,i_0)>0$. Then the submatrix  $D(i_0+1:k+1,i_0:k)$ takes the form,
\begin{align}\label{signmatk-sub}
    \pmat{ - & + & +/0 & \hdots & +/0 \\ -/0 & - & + & \ddots & \vdots \\ \vdots & \ddots & \ddots & \ddots & +/0 \\ -/0 & \hdots & -/0 & - & + \\ + & -/0 &\hdots &-/0 & - },
\end{align}
and it is conformal to $E^{k-i_0+1}$ after row/column permutations. To see this, we can move the first column of \eqref{signmatk-sub} to the last. For the whole matrix $D$, it is equivalent to move the $i_0$th column and insert it between $k$ and $k+1$th column and shift the resulting submatrix to the upper left corner through permutation operations.

While due to \eqref{vecDinnullA}, $(k+1)$th row of $D$ needs to have at least one positive element and we have just shown that $D(k+1,i) \le 0$, $\forall i = 1,...,k$, so WLOG we suppose $D(k+1,k+1)>0$ . Similar argument shows if there is no $X \in \EA$ is conformal to $D$, so $D(i,k+1) \ge 0$, $\forall i = 1,...,k$. 

Therefore, the $(k+1)\times(k+1)$ principal matrix of $D$ has exactly the same sign pattern as indicated by \eqref{signmatk}, after appropriate row/column permutations. So we have the induction $(\mathbf{H}^k)\Rightarrow(\mathbf{H}^{k+1})$.

However, when the $n\times n$ matrix $D$ follows $(\mathbf{H}^{n})$, it has the sign pattern as \eqref{signmatk} after row/column permutations. Then the summation of the last column is strictly positive as indicated by the sign pattern, which contradicts with \eqref{vecDinnullA}.\qed
    \end{proof}
Finally, we show that $\EA$ characterizes all the elementary matrices. 
\begin{theorem}\label{thm: cr}
Given any $D \in \bR^{n \times n}$, if $\ve(D) \in \nul(A)$, then $D$ has a conformal realization \cite[Section 10B]{rockafellar1999network}, namely:
\begin{align}\label{eq: cr}
    D =  D^{(1)} + D^{(2)} + \hdots +  D^{(s)},
\end{align}
where $D^{(1)},\hdots,D^{(s)}$ are elementary matrices of $\nul(A)$ and $D^{(i)}$ is conformal to $D$, for all $i = 1,\hdots,s$. In particular, $D^{(i)} \in \EA$, $\forall i = 1,...,s$. Therefore, $\EA$ includes all the elementary matrices of $\nul(A)$.
\end{theorem}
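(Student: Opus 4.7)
The plan is to first establish the existence of the conformal decomposition \eqref{eq: cr} by invoking the classical conformal realization theorem for the null space of a linear map \cite[Section 10B]{rockafellar1999network}. That theorem gives $D = D^{(1)} + \cdots + D^{(s)}$ where each $D^{(i)}$ is an elementary vector of $\nul(A)$ conformal to $D$. The remaining and substantive content is to verify that every elementary matrix of $\nul(A)$ must lie in $\EA$; this simultaneously yields $D^{(i)} \in \EA$ for each $i$ and the final assertion that $\EA$ exhausts the elementary matrices of $\nul(A)$.

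To show that any elementary matrix $E$ of $\nul(A)$ belongs to $\EA$, I would apply Lemma~\ref{lem:cr} to produce some $X \in \EA$ that is conformal to $E$; in particular $\supp(X) \subseteq \supp(E)$ and $X_{ij} E_{ij} \ge 0$ for every $(i,j)$. I then consider the one-parameter family $E - \alpha X$ and set
\[
\alpha^{*} \;\triangleq\; \min_{(i,j)\,\in\,\supp(X)} \frac{|E_{ij}|}{|X_{ij}|} \;>\; 0.
\]
For all $0 \le \alpha \le \alpha^{*}$, $E - \alpha X$ remains conformal to $E$: entries outside $\supp(X)$ are unchanged, while entries inside $\supp(X)$ shrink monotonically in absolute value without flipping sign because $X_{ij}$ and $E_{ij}$ have the same sign. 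At $\alpha = \alpha^{*}$, at least one entry on $\supp(X)$ reaches zero, so $\supp(E - \alpha^{*} X) \subsetneq \supp(E)$.

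Since $E - \alpha^{*} X \in \nul(A)$ is conformal to $E$ yet has strictly smaller support than $E$, the defining property of an elementary vector forces $E - \alpha^{*} X = 0$. Hence $E = \alpha^{*} X$ is a positive scalar multiple of an element of $\EA$ and therefore $E \in \EA$, which completes the proof. The main obstacle, modest though it is, is the sign-chasing that simultaneously certifies (i) conformality of $E - \alpha X$ with $E$ up to $\alpha = \alpha^{*}$ and (ii) strict shrinkage of the support at $\alpha^{*}$; both rely on the inclusion $\supp(X) \subseteq \supp(E)$ together with the componentwise inequality $X_{ij}E_{ij} \ge 0$ provided by Lemma~\ref{lem:cr}.
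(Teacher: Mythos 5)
Your proof is correct, but it is packaged differently from the paper's. The paper never invokes Rockafellar's general conformal realization theorem as a black box (the citation in the statement is only for the terminology): it proves existence of the decomposition constructively, by repeatedly applying Lemma~\ref{lem:cr} to the current remainder, peeling off $\alpha_k X^{(k)}$ with $X^{(k)} \in \EA$ and $\alpha_k$ chosen maximal, and terminating because $|\supp(\bar D^{(k)})|$ strictly decreases; this yields $D^{(i)} \in \EA$ directly, and elementarity of each piece then follows from the (unnumbered) lemma that every matrix in $\EA$ is elementary. You instead import the existence of a decomposition into elementary vectors of $\nul(A)$ from the general theorem---which is legitimate, since it holds for any subspace of $\bR^N$---and reduce everything to the identification ``elementary $\Rightarrow \EA$'', which you establish by exactly the same one-step peeling mechanism: choose $X \in \EA$ conformal to $E$ via Lemma~\ref{lem:cr}, subtract the maximal multiple $\alpha^* X$, and let elementarity force the remainder to vanish. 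Two remarks on the trade-off. First, your treatment of the elementary case is arguably tighter than the paper's: you run the contradiction on the remainder $E - \alpha^* X$, whose support satisfies $\supp(E - \alpha^* X) \subsetneq \supp(E)$ by construction, whereas the paper argues through the peeled piece $D^{(1)}$, and its claim $\supp(D^{(1)}) \neq \supp(D)$ is slightly elliptical (when $\supp(X^{(1)}) = \supp(D)$ one must instead derive the contradiction from $\bar D^{(1)}$, which is what your argument does uniformly). Second, the paper's constructive route is not idle: the explicit peeling procedure is what powers Remark~\ref{rmk:compare_with_tseng} (an $O(n^2)$-per-step, $O(n^4)$-overall algorithm for computing a conformal realization) and the working-set selection ideas in Section~\ref{sec:RBCD-OT}, so outsourcing existence to the general theorem sacrifices that algorithmic byproduct---though iterating your own peeling step, exactly as the paper does, would recover it at no extra cost.
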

\begin{proof}
By Lemma \ref{lem:cr}, we suppose that $X^{(1)} \in \EA$ and $X^{(1)}$ is conformal to $D$. Then $X^{(1)}$ can be scaled properly by $\alpha_1=\min_{(i,j)\in \supp(X^{(1)})\cap \supp(D)}|D(i,j)/X^{(1)}(i,j)| > 0$ such that $|\supp(D - \alpha_1 X^{(1)})| < |\supp(D)|$ and $D - \alpha_1 X^{(1)}$ is conformal to $D$. Denote $D^{(1)} \triangleq \alpha_1 X^{(1)}$ and $\bar D^{(1)} = D - D^{(1)}$, both of them are conformal to $D$. 

By apply the same procedure to $\bar D^{(1)}$, we get $\bar D^{(2)}$ such that $|\supp (\bar D^{(2)})|<|\supp (\bar D^{(1)})|$. We can repeat this process and eventually, we have that the conformal realization \eqref{eq: cr} holds since $|\supp(D)| \le n^2$ and $|\supp (\bar D^{(k)})|$ is strictly decreasing. 

If $D$ is an elementary matrix, in previous construction of \eqref{eq: cr}, we notice that $D=D^{(1)}+\bar D^{(1)}$ and both of them are conformal to $D$. If $\bar D^{(1)}\not=0$, we have $\supp(D^{(1)})\not = \supp(D)$ while $D^{(1)}$ is conformal to $D$. This contradicts the assumption of $D$ as an elementary matrix. When $\bar D^{(1)}=0$, $D=D^{(1)}$ is a multiple of the special matrix in the description of $\EA$ after a certain row/column permutation. Summing up, $\EA$ includes all the elementary matrices of $\nul(A)$.
\end{proof}}

\begin{remark}\label{rmk:compare_with_tseng}
For a given $D$ such that $\ve(D) \in \nul(A)$, a simple algorithm following the proof in Theorem~\ref{thm: cr} to find an elementary matrix $X$ conformal to $D \neq 0$ will cost at most $O(n^2)$ operations. We can select an appropriate $\alpha > 0$ such that $D - \alpha X$ is conformal to $D$ and $| \supp(D - \alpha X) | < | \supp(D) |$. By repeating this process, we can find the conformal realization in $\supp(D) \leq n^2$ steps. Therefore, the total number of operations needed to find the conformal realization is $O(n^4)$. In comparison, the approach proposed by \cite{tseng2010coordinate} finds a conformal realization with support cardinality less than $l$ (usually, $l$ is much smaller than $n^2$) and requires $O(n^3(n^2 - l)^2)$ operations.
\end{remark}
 
%

\paragraph{Working set selection} By analyzing the structure of elementary matrices of $\nul(A)$, we will have a better idea of potential directions along which the transport cost is minimized by a large amount. This is supported by the following theorem, where we continue using notations introduced in Section~\ref{sec: converg.}.
\begin{theorem}\label{thm: GSqOT}
Consider the linear program \eqref{linprog} where $A \in \bR^{M \times N}$ and $b \in \bR^M$ are defined as in \eqref{def: OTcoeff} ($M=2n$, $N = n^2$). Given any $X \in \bR^{n \times n}$ and $D \in \bR^{n \times n}$ such that $\ve(X) \in \sX$, and $\ve(D) \in \sD(\ve(X);\sN)$. There exists an elementary matrix $\bar D$ of $\nul(A)$ conformal to $D$ such that for any set $\sI \in \sN$ satisfying
\begin{align*}
    \sI \supseteq \supp(\ve(\bar D)),
\end{align*}
We have 
\begin{align}\label{ineq：GSqOT}
    q(\ve(X);\sI) \le \left( \frac{1}{n^2 - 3} \right) q(\ve(X); \sN).
\end{align}
\end{theorem}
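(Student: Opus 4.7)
The plan is to follow the template of the proof of Theorem~\ref{thm: existGsq} while exploiting the OT-specific conformal decomposition supplied by Theorem~\ref{thm: cr}, which will sharpen the generic factor $\frac{1}{N-l+1}$ (with $l = 2n$) to $\frac{1}{n^2-3}$. If $D = 0$ then $\ve(X)$ is already optimal and the claim is vacuous for any choice of $\bar D$, so I would assume $D \neq 0$. Theorem~\ref{thm: cr} then supplies a conformal decomposition $D = D^{(1)} + \cdots + D^{(r)}$ with each $D^{(s)} \in \EA$ conformal to $D$, obtained by the Rockafellar-style peel-off procedure used in that proof.

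Given such a decomposition, I would pick $\bar s \in \argmin_s c^T \ve(D^{(s)})$ and set $\bar D := D^{(\bar s)}$. A standard averaging argument yields $c^T \ve(\bar D) \le \frac{1}{r} c^T \ve(D) = \frac{1}{r} q(\ve(X);\sN)$. Next I would verify that for any $\sI \supseteq \supp(\ve(\bar D))$, the displacement $\ve(\bar D)$ is feasible for the linear program defining $q(\ve(X);\sI)$: the identity $A \ve(\bar D) = 0$ follows from $\bar D \in \EA \subset \nul(A)$; the condition $\ve(\bar D)_i = 0$ for $i \notin \sI$ is built in; and $\ve(X) + \ve(\bar D) \ge 0$ follows from the no-cancellation property of conformal sums ($|D^{(s)}(i,j)| \le |D(i,j)|$ entrywise) together with $\ve(X) + \ve(D) \ge 0$. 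Consequently $q(\ve(X);\sI) \le c^T \ve(\bar D) \le \frac{1}{r} q(\ve(X);\sN)$.

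The main obstacle---and the only step where the OT-specific structure really does work---is the bound $r \le n^2 - 3$. I would combine two facts. First, as in the proof of Theorem~\ref{thm: cr}, the peel-off procedure strictly reduces the residual support at every step, so $|\supp(\bar D^{(s)})| \le |\supp(D)| - s$, where $\bar D^{(s)}$ denotes the residual after removing $s$ pieces. Second, I would record a short lemma: every nonzero $Y$ with $\ve(Y) \in \nul(A)$ satisfies $|\supp(Y)| \ge 4$, which follows from a quick case check since vanishing row and column sums force any row (or column) meeting $\supp(Y)$ to contain at least two nonzero entries, ruling out $|\supp(Y)| \in \{1,2,3\}$. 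Applying this lemma to the penultimate nonzero residual $\bar D^{(r-1)}$ gives $4 \le |\supp(\bar D^{(r-1)})| \le |\supp(D)| - (r-1)$, hence $r \le |\supp(D)| - 3 \le n^2 - 3$.

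Chaining these estimates with $q(\ve(X);\sN) \le 0$ (Remark~\ref{rmk:negative}) then yields the claimed inequality $q(\ve(X);\sI) \le \frac{1}{n^2-3}\, q(\ve(X);\sN)$.
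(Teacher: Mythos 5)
Your proposal is correct and follows essentially the same route as the paper: apply the conformal realization of Theorem~\ref{thm: cr}, select the summand minimizing $c^T \ve(D^{(s)})$, and run the averaging argument of Theorem~\ref{thm: existGsq} with the generic factor sharpened to $\frac{1}{n^2-3}$ via the minimum support size $4$ of nonzero null-space elements. Your residual-support count ($4 \le |\supp(\bar D^{(r-1)})| \le |\supp(D)| - (r-1)$, hence $r \le n^2-3$) in fact makes rigorous what the paper states only tersely (``because the support of $D^{(i)}$ has cardinality at least $4$''), so no gap remains.
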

\begin{proof}
Since $\ve(D) \in \sD(\ve(X);\sN)$, $\ve(D) \in \nul(A)$. Then based on Theorem~\ref{thm: cr}, we have the conformal realization:
\begin{align*}
    D = D^{(1)} + D^{(2)} + ... + D^{(s)}.
\end{align*}
Moreover, proof of Theorem~\ref{thm: cr} indicates that we can construct this realization with $s \le n^2 - 3$, because the support of $D^{(i)}$ has cardinality at least $4$. Then similar to discussion in Theorem~\ref{thm: existGsq}, we may find $\bar s \in \{1,\hdots, s\}$ such that $\bar D = D^{(\bar s)}$, $\sI \supseteq \supp(\ve(D^{(\bar s)}))$, and
\begin{align*}
    q(\ve(X);\sN) \ge (n^2 - 3) q(\ve(X);\sI).
\end{align*}\qed
\end{proof}

Now we discuss two approaches to carefully select the support set $\sI_k$ at iteration $k$ of the block coordinate descent method \eqref{alg: bcd}:\\
\be
\item[1.] {\it Diagonal band.} Given $3 \le p \le n$, denote
\begin{align*}
\mathcal{G} \triangleq 
\left\{ (i,j) \in \mathbb{Z}^2 \Big | \begin{array}{ll}
   i \in [j,j+p-1] & \text{if} \; j \in [1,n-p+1] \\
   i \in [1,...,j+p-n-1] \cup [j,n] & \text{if} \; j \in [n-p+2,n] \\
\end{array}
\right\},
\end{align*}
and construct matrix $G \in \bR^{n \times n}$ such that
\begin{align}\label{def: G}
    G(i,j) = \begin{cases}
1, & \text{if} (i,j) \in \mathcal{G},\\
0,  & \text{otherwise.}
\end{cases}
\end{align}
Therefore, $G$ has the following structure:
\begin{align*}
    \begin{array}{r} p \; \left\{ \begin{array}{r} \\ \\ \\ \\
    \end{array} \right.\\
    \begin{array}{r}
        \\ \\ \\ \\
    \end{array}
    \end{array} \hspace{-0.3cm}
    \pmat{
    \;1\; &&&& 1 & \hdots & 1 \\ 
    \vdots & 1\; &&&& \ddots & \vdots \\
    1 & \vdots & \ddots & &&& 1 \\
    1 & 1 && 1\; &&&\\
    & 1 & \ddots & \vdots & 1\; &&\\
    && \ddots & 1 & \vdots & \ddots &\\
    &&& 1 & 1 & \hdots & 1\;
    }_{n \times n}
    \hspace{-1 cm}
    \begin{array}{r} \left. \begin{array}{r} \\ \\ \\
    \end{array} \right\} (p-1) \; \\
    \begin{array}{r}
        \\ \\ \\ \\ \\
    \end{array}
    \end{array}
\end{align*}
It is like a band of width $p$ across the diagonal, hence the name. Then we may construct $\bar D^k \in \bR^{n \times n}$ and $\sI_k$ as follows:
\begin{align}\label{diagbd}
    \begin{aligned}
& \text{Obtain $\bar D^k$ by uniformly randomly permuting all columns and rows of $G$.} \\
& \text{Let} \sI_k \triangleq \supp(\ve(\bar D^k)).
    \end{aligned}
\end{align}
Note that $|\sI_k| = np$.

\item[2.] {\it Submatrix.} Given $m < n$, obtain $\bar D^k$ and $\sI_k$ such that
\begin{align}\label{submat}
    \begin{aligned}
    & \text{Uniformly randomly pick two sets of $m$ different numbers out of $\{1,...n\}$:} \\ 
    & \quad i_1,...,i_m \text{and} j_1,...,j_m. \\
    & \text{Let} \bar D^k(i,j) = 
    \begin{cases}
    1 & \text{if} i \in \{i_1,...,i_m\} \text{and} j \in \{j_1,...,j_m\},\\
    0 & \text{otherwise.}
    \end{cases}
    \\
    & \text{Let} \sI_k \triangleq \supp(\ve(\bar D^k)).
    \end{aligned}
\end{align}
In this case, the support of $\bar D^k$ is a submatrix of size $m \times m$. Therefore, $| \sI_k | = m^2$.
\ee
Next, we discuss two random block coordinate descent algorithms to solve {\color{black}the LP problem \eqref{linprog} with $A$ given in \eqref{def: OTcoeff}} whose working set selections are based on the two approaches discussed above. 

\setcounter{algorithm}{1}
\begin{algorithm}[H]\caption{Random block coordinate descent - diagonal band ({\rbcddb})}\label{alg:rbcddb}
\begin{algorithmic}
\STATE {\bf (Initialization)} Choose feasible $X^0 \in \bR^{n \times n}$ and band width $p \in [3,n]$. Let $x^0 = \ve(X^0)$.
\FOR{$k=0,1,2,\dotsc$}
\STATE {\bf Step 1. } Choose $\sI_k$ according to \eqref{diagbd}.
\STATE{\bf Step 2. } 
Find $
d^k \in \sD(x^k;\sI_k).
$
\STATE{\bf Step 3.}
$
\;\; x^{k+1} := x^k + d^k.
$
\ENDFOR
\end{algorithmic}
\end{algorithm}

The following result describes the convergence property of Algorithm~\ref{alg:rbcddb}.
\begin{theorem}\label{thm: db}
Consider {\color{black}the LP problem \eqref{linprog} with $A$ given in \eqref{def: OTcoeff}}. Then sequence $\{ x^k \}$ and $\{ \sI_k \}$ generated by Algorithm~\ref{alg:rbcddb} satisfies the expected Gauss-Southwell-$q$ rule \eqref{EGSq}, i.e.,
\begin{align*}
    \bE[ q(x^k;\sI_k) \mid \sF_k] \le v q(x^k;\sN),
\end{align*}
with $v \ge \frac{n(p-2)}{(n^2 - 3)(n!)^2}$. Therefore, $c^T(x^k - x^*) \to 0$ almost surely and $\bE[c^T(x^k - x^*)]$ converges to $0$ exponentially with rate $1-v$.
\end{theorem}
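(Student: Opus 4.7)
The plan is to verify that the expected Gauss--Southwell-$q$ rule \eqref{EGSq} holds with the stated constant $v$; once this is done, the almost-sure convergence of $c^T(x^k-x^*)$ to zero and the exponential decay of $\bE[c^T(x^k-x^*)]$ at rate $1-v$ follow verbatim from the argument in Section~\ref{sec: converg.}, namely from \eqref{EGSq}, \eqref{recurs-ineq} and \cite[Lemma 10, page 49]{polyak1987introduction}. Thus the entire proof reduces to the probabilistic statement on $v$.

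Fix an iteration $k$ and condition on $\sF_k$. Reshape $x^k=\ve(X^k)$, pick any $D$ with $\ve(D)\in\sD(x^k;\sN)$, and invoke Theorem~\ref{thm: GSqOT} to obtain an elementary matrix $\bar D\in\EA$ conformal to $D$ such that $q(x^k;\sI)\le q(x^k;\sN)/(n^2-3)$ for every $\sI\supseteq\supp(\ve(\bar D))$. Because $q(x^k;\sN)\le 0$ (Remark~\ref{rmk:negative}) and $q(x^k;\sI_k)\le 0$ surely (since $d=0$ is feasible in \eqref{def: sd}), splitting on whether or not $\supp(\ve(\bar D))$ is contained in $\sI_k$ gives
\[
\bE[\,q(x^k;\sI_k)\mid \sF_k\,]\;\le\;\bP[\,\supp(\ve(\bar D))\subseteq \sI_k\mid \sF_k\,]\cdot\frac{q(x^k;\sN)}{n^2-3}.
\]
Hence it suffices to lower-bound the above probability by $n(p-2)/(n!)^2$ uniformly over all possible $\bar D\in\EA$.

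For this combinatorial lower bound, note that by \eqref{diagbd} we have $\bar D^k=P_r G P_c^T$ with $P_r,P_c$ corresponding to independent uniformly random permutations $\pi_r,\pi_c\in S_n$, and the inclusion $\supp(\ve(\bar D))\subseteq\sI_k$ is equivalent to $G(\pi_r^{-1}(r),\pi_c^{-1}(c))=1$ for every $(r,c)\in\supp(\ve(\bar D))$. Since $\bar D$ is a scalar multiple of some $E^j$, $j\in\{2,\dots,n\}$, after a fixed pair of row/column permutations, and since the distribution of $(\pi_r,\pi_c)$ is invariant under left composition by any fixed permutation, we may assume WLOG that $\supp(\ve(\bar D))=\supp(\ve(E^j))$. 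Exploiting the circulant identity $G(i,j)=1\Leftrightarrow(i-j)\bmod n\in\{0,\dots,p-1\}$, we produce $n(p-2)$ ordered favorable pairs by choosing a cyclic shift $a\in\{0,\dots,n-1\}$ together with a band offset $\kappa$ in a subset $K_j\subseteq\{0,\dots,n-1\}$ of cardinality at least $p-2$, and setting $\pi_r(i)\equiv i+a\pmod n$ and $\pi_c(i)\equiv i+a+\kappa\pmod n$ on $\{1,\dots,j\}$ (extended arbitrarily on the complement). A direct verification shows that the three families of modular inequalities associated with the diagonal, subdiagonal and wrap-around entries of $\supp(\ve(E^j))$ are simultaneously met for every such $(a,\kappa)$, and the additional multiplicity $((n-j)!)^2\ge 1$ coming from the extension to the complement (when $j<n$) only inflates the count.

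Chaining the two pieces yields $\bE[q(x^k;\sI_k)\mid\sF_k]\le\frac{n(p-2)}{(n^2-3)(n!)^2}q(x^k;\sN)$, which is \eqref{EGSq} with $v\ge n(p-2)/[(n^2-3)(n!)^2]$. The technical heart of the argument, and the main obstacle, is the uniform lower bound $|K_j|\ge p-2$: the diagonal and subdiagonal constraints on $\kappa$ already force it into a set of size $p-1$, but the wrap-around constraint $(1-j-\kappa)\bmod n\in\{0,\dots,p-1\}$ depends on $j$ and is most restrictive for intermediate values where the linear-shift family is tightest; in such regimes one either augments the construction with a non-linear correction on the last few indices or, when $j$ is small, trades base embeddings for the ample padding factor $((n-j)!)^2$. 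Once $|K_j|\ge p-2$ is established uniformly, the remainder of the argument is purely mechanical.
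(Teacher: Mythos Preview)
Your overall strategy matches the paper's: invoke Theorem~\ref{thm: GSqOT}, split the expectation on the event $\{\supp(\ve(\bar D))\subseteq\sI_k\}$, and lower-bound its probability by $n(p-2)/(n!)^2$. The divergence---and the gap---is in how you obtain that combinatorial bound.

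You attempt to embed $\supp(E^j)$ into $G$ using only cyclic shifts $\pi_r(i)=i+a$, $\pi_c(i)=i+a+\kappa$. The diagonal and subdiagonal entries of $E^j$ have row--column offsets $0$ and $1$, but the wrap-around entry $(1,j)$ has offset $(1-j)\bmod n$, which ranges over all of $\{1,\dots,n-1\}$. Your three modular constraints therefore fail to give $|K_j|\ge p-2$ uniformly: for instance $j=n-2$, $p=3$ yields the constraints $\kappa\in\{0,n-1\}$ from the diagonal/subdiagonal and $\kappa\in\{1,2,3\}$ from the wrap-around, so $|K_{n-2}|=0$. The padding factor $((n-j)!)^2=4$ cannot salvage this, since $0\cdot 4<p-2=1$. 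Your final paragraph acknowledges the obstruction and gestures at a ``non-linear correction on the last few indices,'' but this is exactly the missing piece; as written the proof is incomplete.

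The paper avoids the wrap-around problem altogether by first applying a \emph{fixed} pair of row/column permutations to $E^t$ that rewrites its support as the staircase pattern~\eqref{def: Dt}, every entry of which satisfies $(i-j)\bmod n\in\{0,1,2\}$. This width-$3$ pattern sits inside any band of width $p\ge 3$, and one then obtains $n(p-2)$ favorable (row-permutation, column-permutation) pairs simply by sliding it cyclically along the diagonal (factor $n$) and vertically within the band (factor $p-2$). The idea you are missing is that pre-composing by a fixed permutation costs nothing---the uniform distribution on $(\pi_r,\pi_c)$ is invariant under it---so one is free to tame the awkward wrap-around entry before counting.
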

\begin{proof}
Given $x^k$, Theorem~\ref{thm: GSqOT} guarantees that there exists $D^k \in \sE_A$ such that if $\sI_k \supseteq \supp(\ve(D^k))$, then \eqref{ineq：GSqOT} holds for $\sI = \sI_k$ and $\ve(X) = x^k$, i.e.,
\begin{align}
    q(x^k;\sI_k) \le \left( \frac{1}{n^2 - 3} \right) q(x^k; \sN).
\end{align}
Next, we will estimate the probability that $\sI_k \supseteq \supp(\ve(D^k))$ holds.

Suppose that after row/column permutations and scaling of $D^k$, we obtain $E^t$, $2 \le t \le n$. Then after appropriate row and column swapping, $D^k$ can be written as
\begin{align}\label{def: Dt}
\begin{array}{r}
    \begin{array}{c}
        \\
    \end{array} \\
    t \left\{ \begin{array}{c}
         \\ \\ \\ \\ \\ \\
    \end{array} \right.   \\
    \begin{array}{c}
         \\ \\
    \end{array} 
\end{array} \hspace{-0.3cm}
    \pmat{ 0\; &&&&&&&&\\ 
    *\; & *\; &&&&&&& \\
    *\; && *\; &&&&&& \\
    & *\; && \ddots\; &&&&& \\
    && *\; && *\; &&&& \\
    &&& \ddots\; && *\; &&& \\
    &&&& *\; & *\; & 0\; && \\
    &&&&&&& \ddots\; & \\
    &&&&&&&& 0\;
    }_{n \times n}.
\end{align}
That is, elements $(2,1)$ and $(3,1)$ are nonzeros; elements $(j,j)$ and $(\mo(j+2,n),j)$ are nonzeros, for all $j = 2,...,t-1$; elements $(t,t)$ and $( \mo(t+1,n),t )$ are nonzeros; all other elements are zeros. Obviously, support of this matrix is covered by the support of $G$ in \eqref{def: G}. Moreover, by moving the whole support in matrix \eqref{def: Dt} downwards or to the bottom right corner, we can create at least $n(p-2)-1$ more different matrices whose support are all covered by $G$. These $n(p-2)$ matrices can be obtained by permuting rows and columns of $D^k$ in $n(p-2)$ in different ways. Therefore, the probability that $\sI_k$ will cover the support of $D^k$ is at least $\frac{n(p-2)}{(n!)^2}$, and we have that
\begin{align*}
    \bE[q(x^k;\sI_k) \mid x^k] & = \sum_{\supp(\ve(D^k)) \subseteq \sI} q(x^k;\sI)P(\sI_k = \sI) + \sum_{\supp(\ve(D^k)) \nsubseteq \sI} q(x^k;\sI)P(\sI_k = \sI) \\
    & \le \left( \frac{1}{n^2 - 3} \right) q(x^k;\sN) P(\supp(\ve(D^k)) \subseteq \sI_k ) +  0 \\
    & \le \left( \frac{n(p-2)}{(n^2 - 3)(n!)^2} \right) q(x^k;\sN)
\end{align*}
Therefore, the expected Gauss-Southwell-$q$ rule \eqref{EGSq} holds with $v$ at least $\frac{n(p-2)}{(n^2 - 3)(n!)^2}$.\qed
\end{proof}
\begin{remark}
It can be shown that if $n$ is large enough and $p$ is chosen between $O(log(n))$ and $O(n)$, then the lower bound for constant $v$ derived in Theorem~\ref{thm: db} is better than the one estimated for Algorithm~\ref{alg:rbcd}, i.e., $\frac{1}{\binom{N}{l}(N - l +1)}$. In fact,
we have the following results.
\begin{lemma}\label{lm: prange}
Suppose that $ \bar K \ge 2 $ and $\eta > 0$ satisfies 
\begin{align*}
     \frac{ 2 \bar K - 3 }{2 (\bar K - 1)} + \log \left(\frac{\bar K}{2} \right) > 2 / \eta,
\end{align*}
and $n$ satisfies
\begin{align*}
    n \ge \frac{4}{ \left( \frac{ 2 \bar K - 3 }{2 (\bar K - 1)} + \log \left(\frac{\bar K}{2} \right) \right) \eta - 2}, \quad \frac{n}{\log(n)} \ge \eta \bar K.
\end{align*}
Then for any $p \in [\eta \log(n), \frac{n}{\bar K}]$, and $p \ge 3$, we have $\frac{n(p-2)}{(n^2 - 3)(n!)^2} \ge \frac{1}{\binom{n^2}{np}(n^2 - np +1)}$.
\end{lemma}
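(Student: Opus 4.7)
The plan is to take logarithms on both sides of the desired inequality, apply the two-sided Stirling bound $\sqrt{2\pi k}(k/e)^k \le k! \le \sqrt{2\pi k}(k/e)^k e^{1/(12k)}$ to bound $(n!)^2$ from above and $\binom{n^2}{np}$ from below, and reduce the statement to a one-dimensional inequality in $p$. After these substitutions, the principal (exponential) contribution surviving in the ratio $\binom{n^2}{np}/(n!)^2$ is
$$\left(\frac{n}{p}\right)^{np}\left(\frac{n^2}{n^2-np}\right)^{n^2-np}\cdot\frac{e^{2n}}{n^{2n}},$$
multiplied by polynomial-in-$n$ corrections from the Stirling square-root factors and from the prefactors $n(p-2)$, $n^2-np+1$, $n^2-3$ on either side of the inequality.

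The crucial step is to lower-bound the factor $\left(\frac{n^2}{n^2-np}\right)^{n^2-np}$. Writing $x=p/(n-p)$ and applying the Taylor-type inequality $\log(1+x)\ge x-x^2/2$ for $x\ge 0$, one obtains
$$(n^2-np)\log\frac{n^2}{n^2-np} \;\ge\; np-\frac{np^2}{2(n-p)} \;=\; np\cdot\frac{2n-3p}{2(n-p)}.$$
Since $p\mapsto(2n-3p)/(2(n-p))$ is strictly decreasing on $(0,n)$, for $p\le n/\bar K$ the coefficient is bounded below by its value at $p=n/\bar K$, which equals precisely $\frac{2\bar K-3}{2(\bar K-1)}$. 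This identifies the origin of the first summand of $\alpha:=\frac{2\bar K-3}{2(\bar K-1)}+\log(\bar K/2)$. Collecting all estimates, taking logarithms, and dividing by $n$, the desired inequality reduces to showing
$$\varphi(p) \;:=\; p\log(n/p)+p\cdot\frac{2\bar K-3}{2(\bar K-1)} \;\ge\; 2\log(n/e)+O(\log n/n)$$
uniformly for $p\in[\eta\log n,\;n/\bar K]$.

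Differentiating, $\varphi'(p)=\log(n/p)-1+\frac{2\bar K-3}{2(\bar K-1)}$, and for $p\le n/\bar K$ one has $\log(n/p)\ge\log\bar K$, hence $\varphi'(p)\ge\log\bar K-\frac{1}{2(\bar K-1)}$. The elementary inequality $\log\bar K\ge\frac{1}{2(\bar K-1)}$ holds for all $\bar K\ge 2$ (at $\bar K=2$ it reads $\log 2\ge 1/2$, and the difference is increasing in $\bar K$), so $\varphi$ is monotone non-decreasing on the interval, with minimum attained at $p=\eta\log n$. The hypothesis $n/\log n\ge\eta\bar K$ then yields $\log(n/(\eta\log n))\ge\log\bar K=\log(\bar K/2)+\log 2$, so $\varphi(\eta\log n)\ge\eta\log n\cdot(\alpha+\log 2)$. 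Dividing the required inequality by $\log n$, it becomes $\eta(\alpha+\log 2)\ge 2-2/\log n+O(1/n)$, which is implied by the hypothesis $n\ge 4/(\alpha\eta-2)$ (\ie\ $\eta\alpha\ge 2+4/n$) together with the positive slack $\eta\log 2>0$. The main obstacle will be careful bookkeeping of the $O(\log n)$ polynomial corrections from Stirling and the various prefactors so that, after dividing by $n$, they can indeed be absorbed into the $O(\log n/n)$ error term; the hypothesis $n\ge 4/(\alpha\eta-2)$ is precisely the quantitative condition that makes this absorption work.
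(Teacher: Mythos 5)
Your proposal is correct, and its engine is the same as the paper's: both arguments reduce the claim to (essentially) $\binom{n^2}{np}\ge (n!)^2$ after disposing of the prefactors, and both extract the coefficient $\frac{2\bar K-3}{2(\bar K-1)}$ from the factor $\left(\frac{n^2}{n^2-np}\right)^{n^2-np}$ via the same inequality $\log(1+x)\ge x-x^2/2$; the paper writes $p=n/K$ and uses monotonicity of the coefficient in $K\ge \bar K$, which is your monotonicity of $(2n-3p)/(2(n-p))$ in $p$ in disguise. The packaging differs in two places. First, the paper avoids Stirling entirely: it lower-bounds $\log\frac{(n^2)!}{(n^2-np)!}$ by the integral comparison $\sum\log x\ge\int_{n^2-np}^{n^2}\log x\,dx$ and upper-bounds $2\log(n!)+\log((np)!)$ by the crude bounds $\log(m!)\le (m+1)\log(m+1)-m$ and $\log(m+1)\le\log m+\log 2$, so it carries no square-root Stirling corrections at all, whereas your route leaves an $O(\log n)$ bookkeeping debt. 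Second, where you prove $\varphi'\ge 0$ on the interval (via $\log\bar K\ge\frac{1}{2(\bar K-1)}$ for $\bar K\ge 2$) and evaluate at the endpoint $p=\eta\log n$, the paper simply substitutes the two bounds $p\ge\eta\log n$ and $K\ge\bar K$ separately into the final linear-in-$p$ inequality $\left(\frac{2K-3}{2K-2}+\log\left(\frac{K}{2}\right)\right)np\ge(2n+4)\log n$; your monotonicity lemma is a clean unification but not strictly needed. Your deferred absorption does close with plenty of room: after dividing by $n\log n$ the slack is the constant $\eta\log 2$ plus the $2/\log n$ term coming from $2\log(n/e)$ versus $2\log n$, against corrections of size $O(1/n)$, with the hypothesis $\eta\alpha\ge 2+4/n$ supplying a further $4/n$. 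One simplification worth adopting from the paper: the prefactors need not enter your error ledger at all, since $\frac{n(p-2)(n^2-np+1)}{n^2-3}\ge 1$ follows directly from $p\ge 3$, $p\le n/\bar K\le n/2$ and $n\ge 6$.
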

\begin{proof}
See Appendix.\qed
\end{proof}
\noindent Let $n \ge 30$, $\eta = 1$, $\bar K = 8$. Then according to Lemma~\ref{lm: prange}, for $log(n) \le p \le n/8$, the lower bound $\frac{n(p-2)}{(n^2 - 3)(n!)^2}$ is larger. We believe that this is a fairly reasonable range of $p$ when $n$ grows large. This lower bound is improved because we have knowledge of the structure of the elementary matrix when solving OT problems.
\end{remark}
As for the submatrix approach, we often find it quite efficient in numerical experiments. The optimal solution in the submatrix approach can also be decomposed in $\EA$. More precisely, consider that $\sI_k$ is chosen according to \eqref{submat} associated with a submatrix of size $m \times m$. 
Then, the decomposition of $d^k \in \sD(x^k;\sI_k)$ (more rigorously its matrix form) only involves multiples and permutations of the elementary matrices in $\{E^2,E^3,\cdots, E^m\}$.
However, global convergence with a fixed-width submatrix is not guaranteed. In fact, there is a counterexample (see \ref{app: counterex}). Therefore, we propose an algorithm that combines these two approaches together.

\begin{algorithm}[H]\caption{Random block coordinate descent - submatrix and diagonal Band ({\rbcdsdb})}\label{alg:rbcdsdb}
\begin{algorithmic}
\STATE {\bf (Initialization)} Choose feasible $X^0 \in \bR^{n \times n}$, submatrix row/column dimension $m$, band width $p \in [3,n]$ and selection parameter $s \in [0,1]$. Let $x^0 = \ve(X^0)$.
\FOR{$k=0,1,2,\dotsc$}
\STATE {\bf Step 1. } With probability $s$, choose $\sI_k$ according to \eqref{diagbd}; otherwise, choose $\sI_k$ according to \eqref{submat}.
\STATE{\bf Step 2. } 
Find $
d^k \in \sD(x^k;\sI_k).
$
\STATE{\bf Step 3.}
$
\;\; x^{k+1} := x^k + d^k.
$
\ENDFOR
\end{algorithmic}
\end{algorithm}

The convergence of Algorithm~\ref{alg:rbcdsdb} is guaranteed by the next theorem.
\begin{theorem}
Consider {the LP problem \color{black}\eqref{linprog} with $A$ given in \eqref{def: OTcoeff}}. Then sequence $\{ x^k \}$ and $\{ \sI_k \}$ generated by Algorithm~\ref{alg:rbcdsdb} satisfies the expected Gauss-Southwell-$q$ rule \eqref{EGSq}, with $v \ge \frac{sn(p-2)}{(n^2 - 3)(n!)^2}$. Therefore, $c^T(x^k - x^*) \to 0$ almost surely and $\bE[c^T(x^k - x^*)]$ converges to $0$ exponentially with rate $1-v$.
\end{theorem}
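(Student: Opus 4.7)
The plan is to mirror the proof of Theorem~\ref{thm: db} and then absorb the submatrix branch of the algorithm as a harmless contribution via Remark~\ref{rmk:negative}. Condition on $\sF_k$ and split the random choice of $\sI_k$ in Algorithm~\ref{alg:rbcdsdb} into its two independent sources of randomness. Letting $\sI_k^{\mathrm{db}}$ denote a working set drawn by the diagonal band rule \eqref{diagbd} and $\sI_k^{\mathrm{sub}}$ one drawn by the submatrix rule \eqref{submat}, the total expectation decomposes as
\begin{align*}
\bE[\,q(x^k;\sI_k)\mid\sF_k\,]
 \;=\; s\,\bE[\,q(x^k;\sI_k^{\mathrm{db}})\mid\sF_k\,]
 \;+\;(1-s)\,\bE[\,q(x^k;\sI_k^{\mathrm{sub}})\mid\sF_k\,].
\end{align*}

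For the first term, I would directly reuse the argument in Theorem~\ref{thm: db}: pick $d^k \in \sD(x^k;\sN)$, invoke Theorem~\ref{thm: GSqOT} to obtain an elementary matrix $D^k \in \EA$ conformal to $d^k$ with $q(x^k;\sI) \le \frac{1}{n^2-3}q(x^k;\sN)$ whenever $\supp(\ve(D^k))\subseteq\sI$, and then count the number of row/column permutations of the canonical matrix $G$ in \eqref{def: G} whose support covers $\supp(\ve(D^k))$. That count is at least $n(p-2)$ out of $(n!)^2$ equally likely permutations, producing
\begin{align*}
\bE[\,q(x^k;\sI_k^{\mathrm{db}})\mid\sF_k\,]
 \;\le\; \frac{n(p-2)}{(n^2-3)(n!)^2}\,q(x^k;\sN).
\end{align*}

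For the second term, by Remark~\ref{rmk:negative} we have $q(x^k;\sI_k^{\mathrm{sub}})\le 0$ almost surely (taking $d=0$ is always feasible for the subproblem), so $\bE[\,q(x^k;\sI_k^{\mathrm{sub}})\mid\sF_k\,]\le 0$. Because $q(x^k;\sN)\le 0$ as well, combining the two bounds and using that we are multiplying a nonpositive quantity yields
\begin{align*}
\bE[\,q(x^k;\sI_k)\mid\sF_k\,]
 \;\le\; s\,\frac{n(p-2)}{(n^2-3)(n!)^2}\,q(x^k;\sN)
 \;+\;0
 \;=\; v\,q(x^k;\sN),
\end{align*}
with $v\ge \frac{sn(p-2)}{(n^2-3)(n!)^2}$, establishing the expected Gauss-Southwell-$q$ rule \eqref{EGSq}. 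The almost-sure convergence of $c^T(x^k-x^*)$ to $0$ and the geometric decay of $\bE[c^T(x^k-x^*)]$ at rate $1-v$ then follow immediately from the generic recursion \eqref{recurs-ineq} derived in Section~\ref{sec: converg.} together with \cite[Lemma 10, p.~49]{polyak1987introduction}.

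There is no real obstacle here; the only subtle point to flag carefully is the sign bookkeeping: dropping the submatrix term is legitimate precisely because both $q(x^k;\sI_k^{\mathrm{sub}})$ and $q(x^k;\sN)$ are nonpositive, so ``bounding above by $0$'' on the left is equivalent to ``bounding above by $0\cdot q(x^k;\sN)$'' on the right without flipping any inequality. Everything else is a direct reduction to Theorem~\ref{thm: db}.
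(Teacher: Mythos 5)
Your proposal is correct and follows essentially the same route as the paper's proof: both reduce to the covering probability of the elementary matrix supplied by Theorem~\ref{thm: GSqOT}, reuse the count of at least $n(p-2)$ favorable band permutations from Theorem~\ref{thm: db}, and discard the submatrix branch's contribution (the paper does this by computing per-type covering probabilities $p_t$, including the term $(1-s)\bigl(\tfrac{m!/(m-t)!}{n!/(n-t)!}\bigr)^2$ for $t\le m$, and then taking $\min_t p_t \ge \tfrac{sn(p-2)}{(n!)^2}$, while you drop that branch directly via $q(x^k;\sI_k^{\mathrm{sub}})\le 0$ from Remark~\ref{rmk:negative}). Your sign bookkeeping for why the submatrix term can be bounded by zero without flipping the inequality is exactly right, so the two arguments yield the identical bound $v \ge \tfrac{sn(p-2)}{(n^2-3)(n!)^2}$.
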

\begin{proof}
Given $x^k$, Theorem~\ref{thm: GSqOT} shows that there exists $D^k \in \sE_A$ such that if $\sI_k \supseteq \supp(\ve(D^k))$, then \eqref{ineq：GSqOT} holds with $\sI = \sI_k$ and $\ve(X) = x^k$. We estimate the probability that $\sI_k \supseteq \supp(\ve(D^k))$.

First, consider the case that after row/column permutations and scaling of $D^k$, we obtain $E^t$, $2 \le t \le m$. If $\sI_k$ is chosen according to \eqref{diagbd}, then similar to discussion in Theorem~\ref{thm: db}, $\sI_k$ will cover the support of $D^k$ with probability at least $ \frac{n(p-2)}{(n!)^2} $. If $\sI_k$ is chosen according to \eqref{submat}, then $\sI_k$ will cover the support of $D^k$ with probability
\begin{align*}
    \frac{ \binom{n-t}{m-t}^2 }{ \binom{n}{m}^2 } = \left( \frac{(n-t)!/((m-t)!(n-m)!)}{n!/(m!(n-m)!)} \right)^2 = \left( \frac{m!/(m-t)!}{n!/(n-t)!} \right)^2.
\end{align*}
Therefore, in this case, the probability $p_t$ that $\sI_k$ cover the support of $D^k$ is:
\begin{align*}
    p_t \ge \frac{sn(p-2)}{(n!)^2} + (1-s)\left( \frac{m!/(m-t)!}{n!/(n-t)!} \right)^2.
\end{align*}
Then we consider the case that when we get $E^t$, $m+1 \le t \le n$ after row/column permutations and rescaling of $D^k$. In this case, if $\sI_k$ is chosen according to \eqref{diagbd}, $\sI_k$ will cover the support of $D^k$ with probability at least $ \frac{n(p-2)}{(n!)^2} $; if $\sI_k$ is chosen according to \eqref{submat}, this probability is $0$. Therefore, in this case we have $p_t \ge \frac{sn(p-2)}{(n!)^2}$. In general, the probability that $\sI_k$ cover the support of $D^k$ is at least $\min\{ p_t \} \ge \frac{sn(p-2)}{(n!)^2}$. Similar to discussion in Theorem~\ref{thm: GSqOT}, \eqref{EGSq} will hold with $v \ge \frac{sn(p-2)}{(n^2 - 3)(n!)^2}$.\qed
\end{proof}

\paragraph{Accelerated random block coordinate descent} Algorithm~\ref{alg:arbcd} is an accelerated random block coordinate descent ({\arbcd}) algorithm. It selects the working set $\sI_k$ in a different way from Algorithm~\ref{alg:rbcdsdb} intermittently for acceleration purposes. At certain times, we construct $\sI_k$ based on the iterates generated by the algorithm in the past, i.e., $x^{end} - x^{start}$. This vector reflects the progress achieved by running the {\rbcdsdb} for a few iterations and predicts the direction in which the algorithm could potentially make further improvements. This choice is analogous to the momentum concept often employed in acceleration techniques in optimization, such as in the heavy ball method and Nesterov acceleration. Algorithm~\ref{alg:arbcd} has a similar convergence rate as Algorithm~\ref{alg:rbcdsdb} (note that the acceleration iteration occurs occasionally). We will verify its improved performance in the numerical experiments.

\begin{algorithm}[H]
\caption{Accelerated random block coordinate descent ({\arbcd})}\label{alg:arbcd}
\begin{algorithmic}
\STATE {\bf (Initialization)} Choose feasible $X^0 \in \bR^{n \times n}$, submatrix row/column dimension $m$, band width $p \in [3, n]$, selection parameter $s \in [0, 1]$, and acceleration interval $T$. Let $x^0 = \ve(X^0)$, $x^{start} = x^{end} = x^0$. Binary variable $acc$.
\FOR{$k=0,1,2,\hdots$}
\STATE{\bf Step 1. } Choose $\sI_k$ as following.
\IF{$\mo(k+1,T) \neq 0$ or $|\supp(x^{end} - x^{start})| \le m^2$}
\STATE $acc = \texttt{false}$. With probability $s$, choose $\sI_k$ according to \eqref{diagbd}; otherwise, choose $\sI_k$ according to \eqref{submat}.
\ELSE
\STATE $acc = \texttt{true}$. Choose $\sI_k$ uniformly randomly from $\supp(x^{end} - x^{start})$ so that $|\sI_k| = m^2$.
\ENDIF
\STATE{\bf Step 2. } 
Find $
d^k \in \sD(x^k;\sI_k).
$
\STATE{\bf Step 3. } Update 
$
x^{k+1} := x^k + d^k;
$
\STATE{\bf Step 4. } Update $x^{end} = x^{k+1}.$
\IF {$acc = \texttt{true}$.}
\STATE Update $ x^{start} = x^{k+1}.$
\ENDIF
\ENDFOR
\end{algorithmic}
\end{algorithm}

\section{Numerical experiments}\label{sec:num}
In this section, we conduct numerical experiments on various examples of OT problems\footnote{All experiments are conducted using Matlab R2021b on Dell OptiPlex 7090 with CPU: Intel(R) Core(TM) i9-10900 @ 2.80GHz (20 CPUs), $\sim$2.8GHz and RAM: 65536Mb. Data and codes are uploaded to \texttt{https://github.com/yue-xie/RBCDforOT}.}. In Section~\ref{subsec: numerics-rbcds}, we compare various random block coordinate descent methods with different working set selections proposed in this article. Then, we compare the one with the best performance - {\arbcd} with {\sinkhorn} in Section~\ref{subsec: sink} and an interior point inspired algorithm in Section~\ref{subsec: ipm}. Finally, a large-scale OT problem is solved using {\arbcd} in Section~\ref{subsec: large}.

\subsection{Comparison between various random block coordinate descent methods with the different working set selection rules}\label{subsec: numerics-rbcds}
In this subsection, we apply the proposed random block coordinate descent methods (Alg.~\ref{alg:rbcd} - Alg.~\ref{alg:arbcd}) to calculate the Wasserstein distance between three pairs of distributions. We compare these algorithms to illustrate the difference between various working set selection rules. Additionally, we inspect the differences between the theoretical and actual convergence rates, as well as the solution sparsity.

\paragraph{Experiment settings} We compute the Wasserstein distance between a pair of 1-dim probability distributions (standard normal and uniform over $[-1,1]$), a pair of 2-dim probability distributions (uniform over $[-\pi,\pi]^2$ and an empirical invariant measure obtained from IPM simulation of reaction-diffusion particles in advection flows, detailed configurations can be found in \cite{wang2022deepparticle}, Section 4.2, 2D cellular flow, $\kappa=2^{-4}$), and a pair of 3-dim distributions (uniform over $[-1,1]^3$ and 3-dimensional multivariate normal distribution). When computing the Wasserstein distance between the pair of 1-dim probability distributions, we utilize their histograms (c.f. Section~\ref{sec:WdistanceOT}): Let $n = 1001$. Centers of the cells are $y^i = \tilde y^i = \frac{i-501}{500}$, $i = 1,..,1001$; $C_{i,j} = dist(\tilde{y}^i, y^j)^2$, $1 \le i,j \le 1001$; $r_{1,i} = \frac{\phi(y^i)}{\sum_{i=1}^{1001} \phi(y^i)}$, $i = 1,...,1001$, where $\phi(y)$ is the pdf of standard normal; $ r^2 = (1/1001,...,1/1001)^T \in \bR^{1001}$.  When calculating the Wasserstein distance between the second and third pairs, we apply the point cloud setting (c.f. Section~\ref{sec:WdistanceOT}): Let $n = 1000$. For each pair, use i.i.d. samples $\{ \tilde{y}^i \}$ and $\{ y^j \}$, $1\le i, j \le 1000$ to approximate the two continuous probability measure respectively. Let $C_{i,j} = dist(\tilde{y}^i, y^j)^2$ and $r^1 = r^2 = (1/1000,...,1/1000)^T \in \bR^{1000}$. \rev{In all cases, we normalize the cost matrix $C$ such that its maximal element is $1$.} Figure~\ref{fig: distr.} captures these three pairs of distributions. For all cases, we first use the {\texttt linprog} in {\texttt Matlab} to find a solution with high precision (dual-simplex, constraint tolerance 1e-9, optimality tolerance 1e-10). 
  
\begin{figure}[ht!]
    \begin{center}
    \includegraphics[width = .3\textwidth]{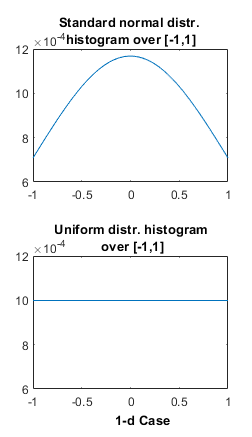}
    \includegraphics[width = .3\textwidth]{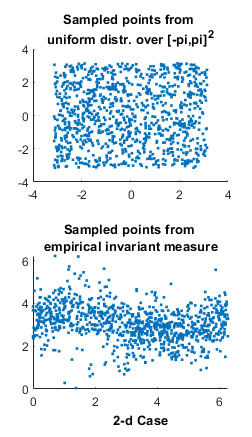}
    \includegraphics[width = .3\textwidth]{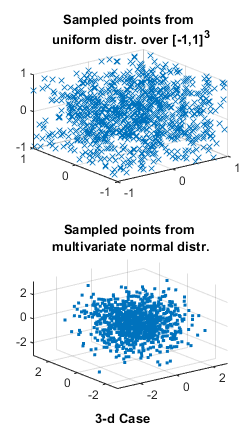}
    \end{center}
    \caption{Three pairs of distributions}
    \footnotesize
    In 1-d case, we compare the histograms of two distributions; in 2-d and 3-d settings, we compare the samples/point clouds of two distributions.
    \label{fig: distr.}
\end{figure}

\paragraph{Methods} We specify the settings of the four algorithms.  All algorithms are started at the same feasible $x^0 = \ve(r^1(r^2)^T)$ in each experiment. We solve the LP subproblems via {\texttt linprog} in {\texttt Matlab} with high precision (dual-simplex, constraint tolerance 1e-9, optimality tolerance 1e-7).\\
\noindent{{\rbcd}.} Algorithm~\ref{alg:rbcd}: Vanilla random block coordinate descent.  Let $l = 100^2$.  Stop the algorithm after $5000$ iterations. \\
\noindent{{\rbcddb}.} Algorithm~\ref{alg:rbcddb}: Random block coordinate descent - diagonal band.  Let $p = \lfloor 100^2/n \rfloor$.  Stop the algorithm after $5000$ iterations. \\
\noindent{{\rbcdsdb}.} Algorithm~\ref{alg:rbcdsdb}: Random block coordinate descent - submatrix and diagonal band. Let $m = 100$, $p = \lfloor m^2/n \rfloor$ and $s = 0.1$. Stop the algorithm after $5000$ iterations. \\
\noindent{\bf ARBCD.} Algorithm~\ref{alg:arbcd}: Accelerated random block coordinate descent. Let $m = 100$, $p = \lfloor m^2/n \rfloor$, $s = 0.1$ and $T = 10$. Stop the algorithm after $5000$ iterations. Note that the degree of freedom of the subproblem per iteration is $100^2$, about $1/100$ the size of the original one.\\

\begin{figure}[ht!]
    \begin{center}
    \includegraphics[width = .3\textwidth]{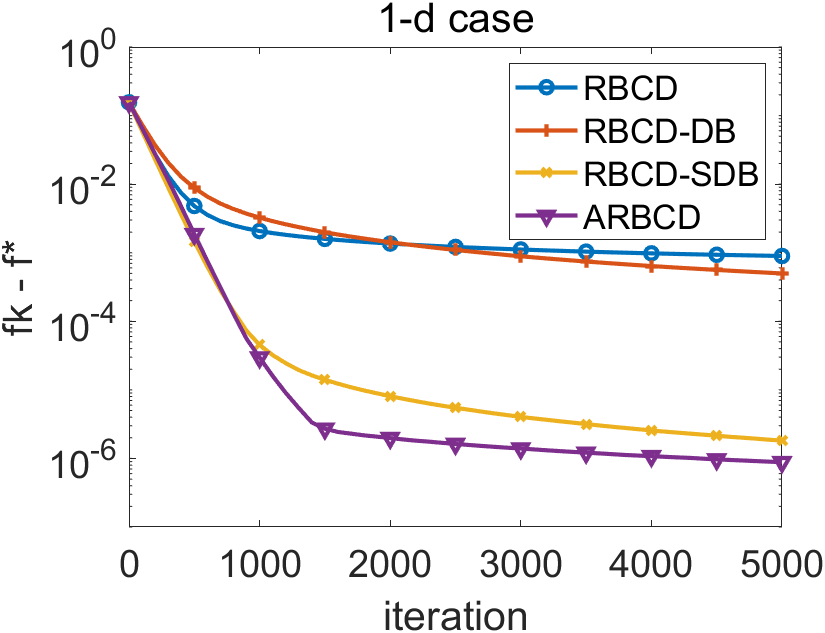}
    \includegraphics[width = .3\textwidth]{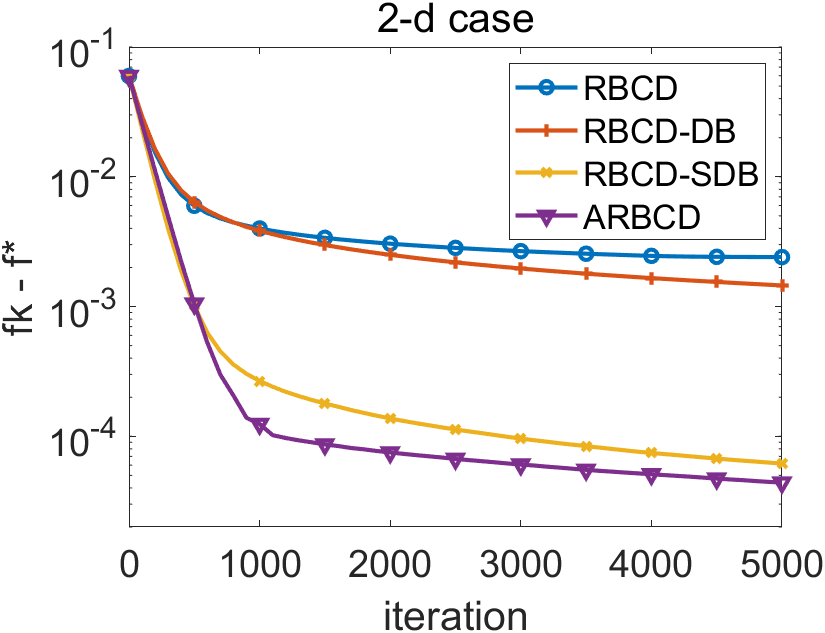}
    \includegraphics[width = .3\textwidth]{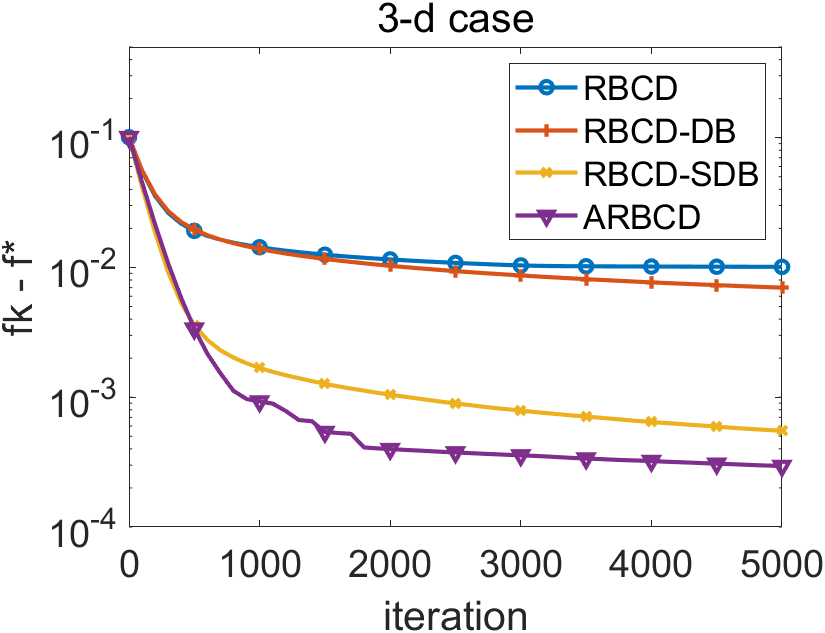}
    \includegraphics[width = .3\textwidth]{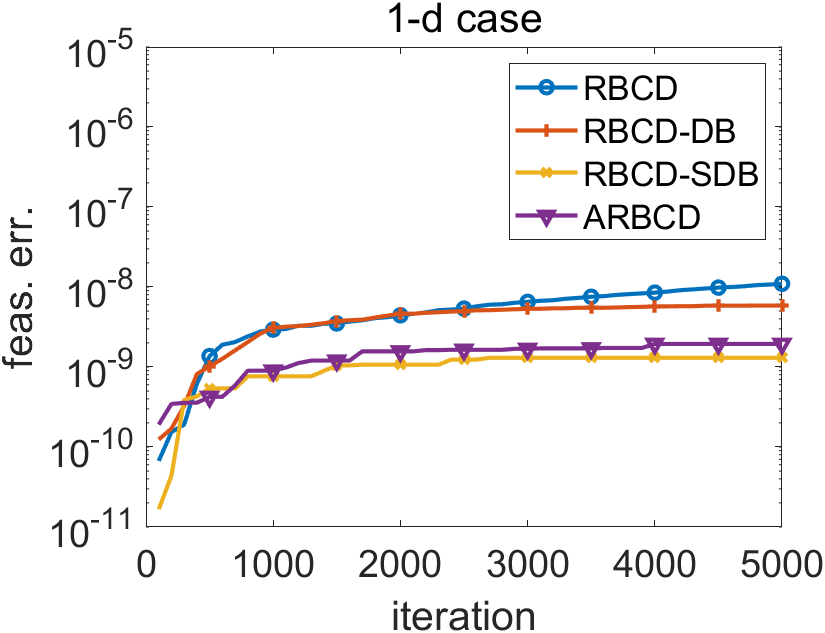}
    \includegraphics[width = .3\textwidth]{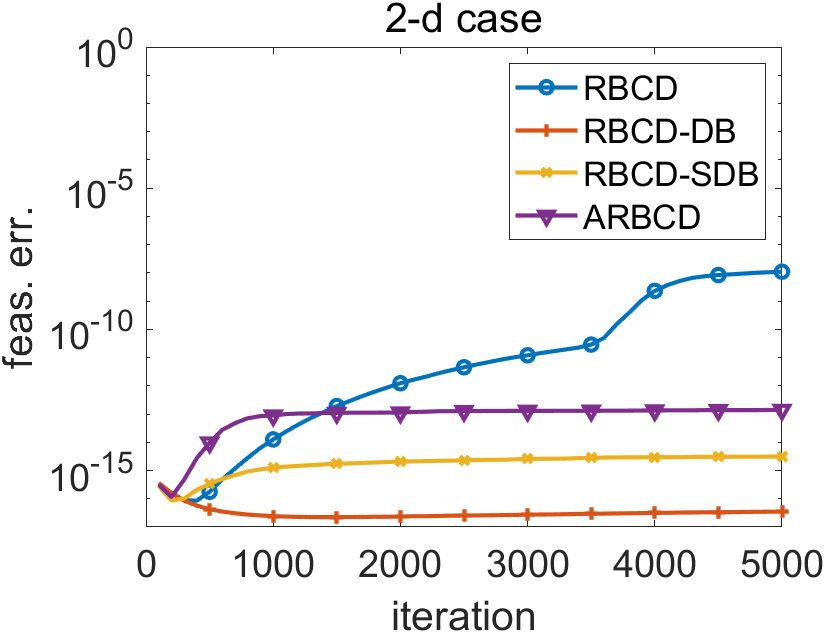}
    \includegraphics[width = .3\textwidth]{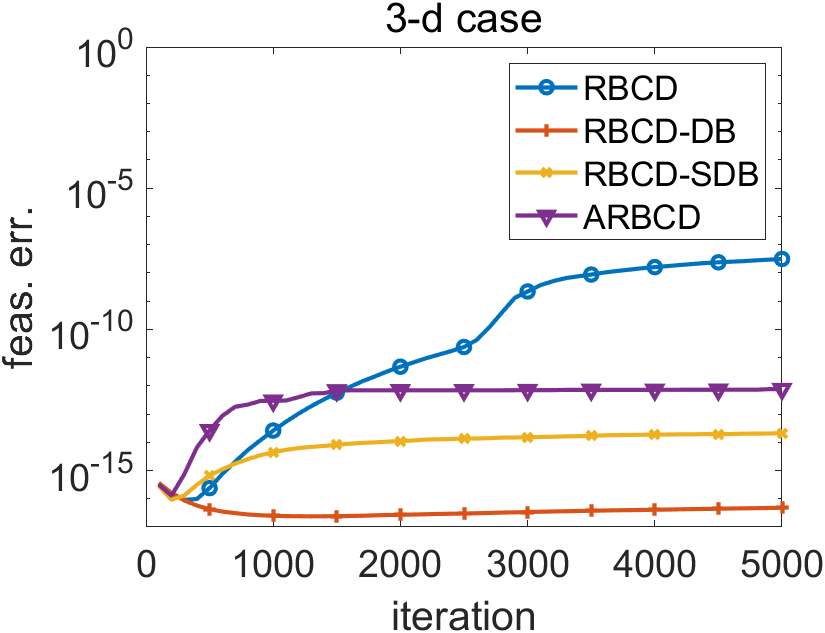}
    \end{center}
    \caption{Comparison of algorithms to compute Wasserstein distance I}
    \footnotesize
    X-axis is the number of operated iterations. Y-axis is the optimality gap $f_k - f^* = c^T x^k - c^T x^*$ (first row) or corresponding feasibility error $\| X^k \cdot {\bf 1} - r^1 \| + \| (X^k)^T \cdot {\bf 1} - r^2 \|$ (second row).  First row of subplots in this figure shows the trajectory/progress of Alg.~\ref{alg:rbcd}, Alg.~\ref{alg:rbcddb}, Alg.~\ref{alg:rbcdsdb} and Alg.~\ref{alg:arbcd} when computing the Wasserstein distance between the three pairs of prob. in 1-d, 2-d, and 3-d respectively. Each algorithm is run 5 times and the curves showcase the average behavior. \rev{Second row of subplots shows the corresponding average feasibility error against computation time for the algorithms in this experiment.}
    \label{fig: traj}
\end{figure}

\paragraph{Comments on Figure~\ref{fig: traj}} We can see from Figure~\ref{fig: traj} that different approaches to choosing the working set of the same size can significantly affect the performance of random BCD types of methods. The curves of {\rbcddb} are below those of {\rbcd} in the long run, demonstrating that {\rbcddb} has a better average performance. The reason for this is that {\rbcd} generates the working set with full randomness, while {\rbcddb} takes the structure of the elementary matrices into account. The latter makes an educated guess at the working set that decreases the objective function by a large amount. The submatrix approach \eqref{submat} works very well in practice, as illustrated by the better performances of {\rbcdsdb} and {\arbcd} compared to {\rbcddb}. In the long run, {\arbcd} outperforms {\rbcdsdb}, verifying the acceleration effect. It is important to note that the algorithm settings are set by default. We expect and have observed similar behaviors of the algorithms when changing their algorithm settings. \rev{Also note that the feasibility error of algorithms are controlled at a low level according to the figure.} On the other hand, the curves in these numerical experiments suggest sublinear convergence rates. This observation does not contradict the theoretical linear convergence rate as long as $v$ is small enough. We will verify that the numerical experiments do not violate the lower bounds we derived for the constant $v$ in the linear convergence rates.

\begin{table}
\footnotesize
  \centering
  \begin{tabular}{ccc|ccc|ccc}
    \toprule
   \multicolumn{3}{c}{ 1-d case } & \multicolumn{3}{c}{ 2-d case} & \multicolumn{3}{c}{ 3-d case} \\
   \midrule
  iter. & $(\bar f_k - f^*)C_{\max}$ & $\hat v$ &  iter. & $(\bar f_k - f^*)C_{\max}$ & $\hat v$ & iter. & $(\bar f_k - f^*)C_{\max}$ & $\hat v$ \\
  \midrule
  0 & 0.6237 & N/A & 0 & 9.3538 & N/A & 0 & 3.3456 & N/A \\ 
   1000 & 0.0083 & 4.3e-3 & 1000 & 0.6254 & 2.7e-3 & 1000 & 0.4746 & 2.0e-3 \\ 
    2000 & 0.0054 & 4.3e-4 & 2000 & 0.4773 & 2.7e-4 & 2000 & 0.3821 & 2.2e-4 \\ 
     3000 & 0.0045 & 1.8e-4 & 3000 & 0.4183 & 1.3e-4 & 3000 & 0.3438 & 1.1e-4 \\ 
      4000 & 0.0039 & 1.4e-4 & 4000 & 0.3846 & 8.3e-5 & 4000 & 0.3371 & 2.0e-5 \\ 
       5000 & 0.0036 & 8.0e-5 & 5000 & 0.3762 & 2.2e-5 & 5000 & 0.3350 & 6.2e-6 \\ 
    \bottomrule
  \end{tabular}
\caption{Data of {\rbcd}, $C_{\max}$ is the maximal value of elements in $C$} \label{tab: RBCD}
\end{table} 

\begin{table}
\footnotesize
  \centering
  \begin{tabular}{ccc|ccc|ccc}
    \toprule
   \multicolumn{3}{c}{ 1-d case } & \multicolumn{3}{c}{ 2-d case} & \multicolumn{3}{c}{ 3-d case} \\
   \midrule
  iter. & $(\bar f_k - f^*)C_{\max}$ & $\hat v$ &  iter. & $(\bar f_k - f^*)C_{\max}$ & $\hat v$ & iter. & $(\bar f_k - f^*)C_{\max}$ & $\hat v$ \\
  \midrule
  0 & 0.6237 & N/A & 0 & 9.3538 & N/A & 0 & 3.3456 & N/A \\ 
   1000 & 0.0130 & 3.9e-3 & 1000 & 0.6002 & 2.7e-3 & 1000 & 0.4601 & 2.0e-3 \\ 
    2000 & 0.0057 & 8.2e-4 & 2000 & 0.3920 & 4.3e-4 & 2000 & 0.3414 & 3.0e-4 \\ 
     3000 & 0.0036 & 4.6e-4 & 3000 & 0.3074 & 2.4e-4 & 3000 & 0.2878 & 1.7e-4 \\ 
      4000 & 0.0026 & 3.3e-4 & 4000 & 0.2592 & 1.7e-4 & 4000 & 0.2544 & 1.2e-4\\ 
       5000 & 0.0020 & 2.6e-4 & 5000 & 0.2276 & 1.3e-4 & 5000 & 0.2316 & 9.4e-5 \\ 
    \bottomrule
  \end{tabular}
    \caption{Data of {\rbcddb}}  \label{tab: RBCD-DB}
\end{table} 

\paragraph{About Table~\ref{tab: RBCD} \& \ref{tab: RBCD-DB}} In these two tables we record the optimality gap $\bar f_k - f^*$ every 1000 iterations for both {\rbcd} and {\rbcddb}. $\bar f_k$ is the average function value at iteration $k$, as we run the algorithms repeatedly for 5 times. The column $\hat v$ denotes the estimation of the constant $v$ in the expected Gauss-Southwell-$q$ rule \eqref{EGSq}. It is calculated by the formula: $ \hat v = 1 - \sqrt[1000]{(\bar f_k - f^*)/( \bar f_{k-1000} - f^* ) } $. Values of $\hat v$ in both Table~\ref{tab: RBCD} \& \ref{tab: RBCD-DB} are far larger than the lower bounds for $v$: $\frac{1}{\binom{N}{l}(N-l+1)}$ and $\frac{(n-1)(p-2)}{(n^2-3)(n!)^2}$, corresponding to {\rbcd} and {\rbcddb} respectively, where $N = n^2$. They also decrease as we run more iterations, indicating that the optimality gap shrinkage becomes less when the iterate is closer to the solution. We intend to study this phenomenon in our future work.

\begin{figure}[ht!]
    \begin{center}
    \includegraphics[width = .3\textwidth]{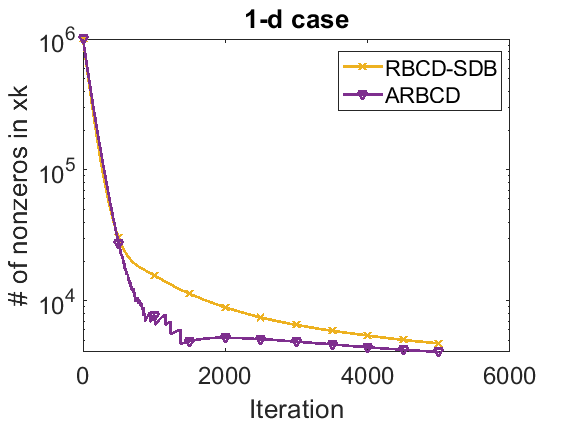}
    \includegraphics[width = .3\textwidth]{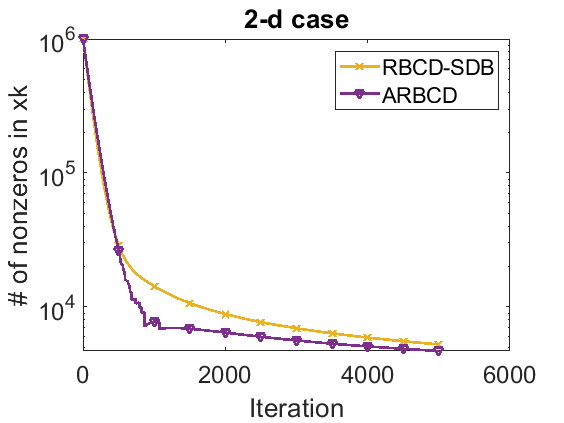}
    \includegraphics[width = .3\textwidth]{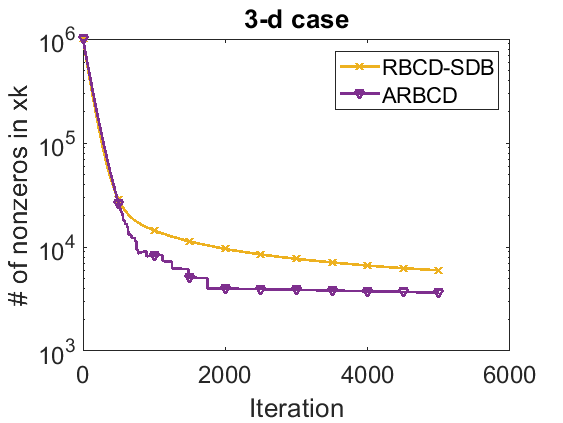}
    \end{center}
    \caption{Sparsity of solutions}
    \footnotesize
    Y-axis records $\| x_k \|_0$, i.e., the number of nonzero elements in $x_k$.  This figure shows the sparsity of $x_k$ in {\rbcdsdb} and {\arbcd} when computing the Wasserstein distance given the three pairs of probability distributions. Each curve represents the average over 5 repetitions.
    \label{fig: card}
\end{figure}

\paragraph{Sparse solutions} We can observe from Figure~\ref{fig: card} that the iterates in {\rbcdsdb} and {\arbcd} become sparse quickly and remain so. The reason for this is that the solutions of OT problems are usually sparse (for the point cloud setting, at least one of the optimal solutions satisfies $\| x^* \| = n$ because extreme points of the LP in this setting are permutation matrices divided by $n$), and these two algorithms can locate solutions with high accuracy relatively fast. As a result, the storage need for these two algorithms is considerably reduced after they have been run for a while. In the point cloud setting, storage complexity is typically expected to decrease from $O(n^2)$ to $O(n)$ (note that the degree of freedom of the subproblem per iteration is typically chosen as $O(n)$ because of the diagonal band approach with $p \ge 3$). 

\paragraph{\rev{Choice of $m$}} \rev{We run {\rbcdsdb} and {\arbcd} with different $m$ for a fixed $n$ to test the optimal setting of subproblem size. As is shown in Figure~\ref{fig: m}, the best choice of $m$ happens at a smaller value (10 or 30 percent of $n$) in 1-d case. In Figure~\ref{fig: m2}, we observe similar phenomenon in 2-d case. The optimal setting of $m$ shifts to a larger value in 3-d case. We conclude that the optimal setting of $m$ may vary depending on many factors such as specific problem and subproblem solver efficiency. Based on the given results, we suggest choosing a smaller $m$ in practice, especially when there is limitation of computation resources.}

\begin{figure}[ht!]
    \begin{center}
    \includegraphics[width = .3\textwidth]{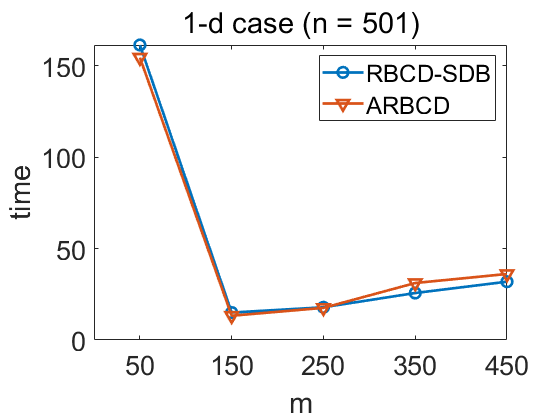}
    \includegraphics[width = .3\textwidth]{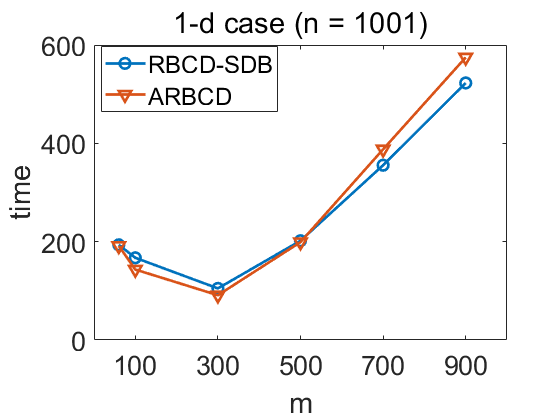}
    \includegraphics[width = .3\textwidth]{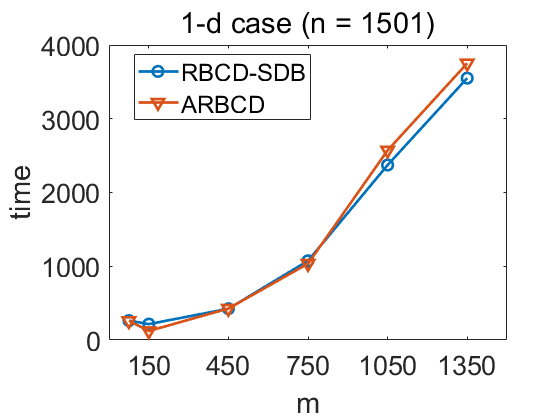}
    \end{center}
    \caption{Choice of $m$ under various $n$}
    \footnotesize
    Wall-clock time of algorithm {\rbcdsdb} and {\arbcd} to compute solutions of accuracy level $f^* \times 10^{-3}$. Algorithms are stopped if the solution accuracy is within the tolerance. Repetition is 3 and the average time is reported. $p = \lfloor m^2/n \rfloor$.
    \label{fig: m}
\end{figure}

\begin{table}[ht!]
\footnotesize
  \begin{center}
  \begin{tabular}{cc|cccccc}
    \toprule
  \multicolumn{8}{c}{ $n = 501$ } \\
  \midrule
   & $m$ & & 50 & 150 & 250 & 350 & 450 \\
  {\rbcdsdb} & iter. & & 2.00e+04 & 3.55e+02 & 55.0 & 17.7 & 7.67 \\
  & err. & & 7.28e-07 & 4.95e-07 & 4.88e-07 & 4.01e-07 & 1.71e-07 \\
  {\arbcd} & iter. & & 1.95e+4 & 3.01e+02& 58.0 & 22.3 & 8.33 \\
  & err. & & 5.68e-07 & 4.90e-07 & 4.92e-07 & 9.08e-08 & 9.49e-08 \\
  \midrule
  \multicolumn{8}{c}{ $n = 1001$ } \\
  \midrule
   & $m$ & 60 & 100 & 300 & 500 & 700 & 900 \\
  {\rbcdsdb} & iter. & 2.00e+04 & 1.14e+04 & 2.21e+02 & 47.7 & 20 & 7.67 \\
  & err. & 3.55e-06 & 4.93e-07 & 4.93e-07 & 2.96e-07 & 3.51e-07 & 1.65e-07 \\
  {\arbcd} & iter. & 2.00e+04 & 9.67e+03 & 1.74e+02 & 47.3 & 22.3 & 8.33 \\
  & err. & 2.06e-06 & 4.94e-07 & 4.60e-07 & 4.09e-07 & 3.39e-07 & 1.17e-08 \\
  \midrule
  \multicolumn{8}{c}{ $n = 1501$ } \\
  \midrule
   & $m$ & 75 & 150 & 450 & 750 & 1050 & 1350 \\
  {\rbcdsdb} & iter. & 2.00e+04 & 6.80e+03 & 1.69e+02 & 50.3 & 19.3 & 7.67 \\
  & err. & 3.90e-06 & 4.93e-07 & 4.89e-07 & 4.18e-07 & 3.63e-07 & 1.70e-07 \\
  {\arbcd} & iter. & 2.00e+04 & 3.11e+03 & 1.67e+02 & 46.3 & 20 & 8 \\
  & err. & 1.85e-06 & 4.93e-07 & 4.86e-07 & 4.15e-07 & 3.08e-07 & 2.35e-07 \\
    \bottomrule
  \end{tabular}
  \end{center}
\caption{Data of Figure~\ref{fig: m}}
The average iterations and solution errors are recorded for reference. Feasibility errors are kept at a low level (below 1e-15) and omitted.
\label{tab: m}
\end{table} 

\begin{figure}[ht!]
    \begin{center}
    \includegraphics[width = \textwidth]{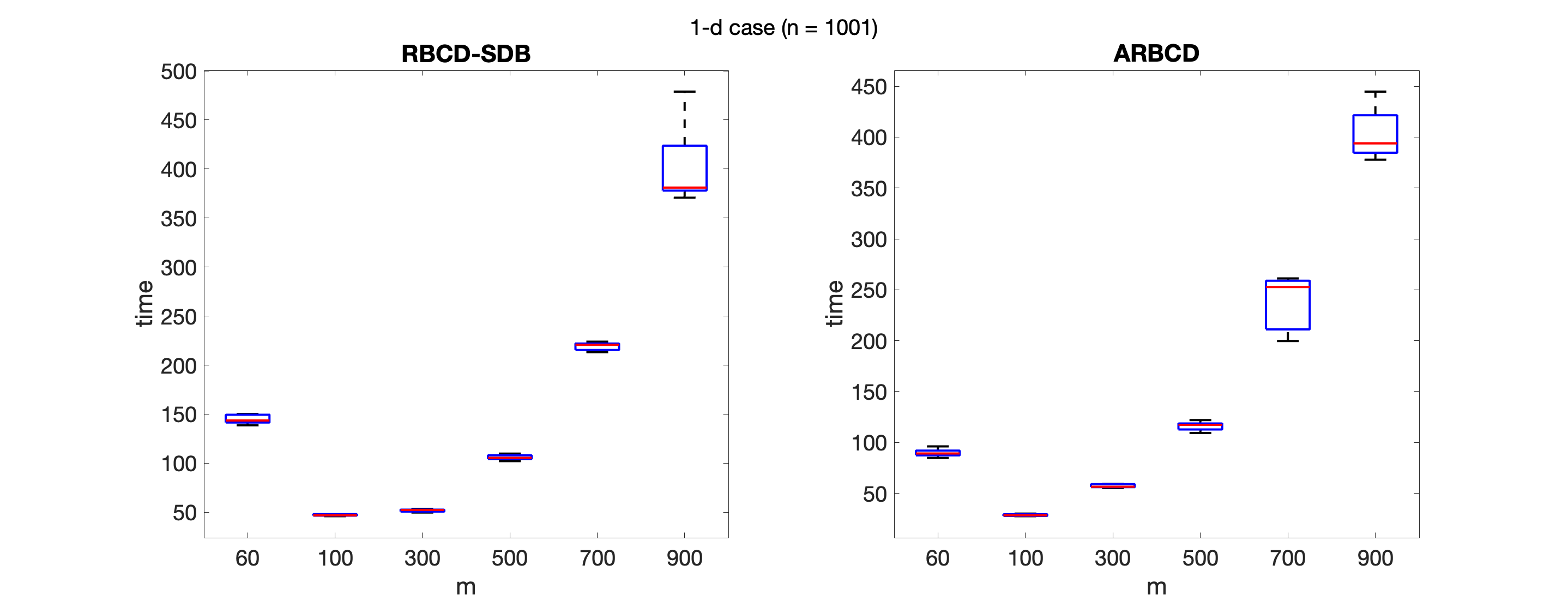}
    \includegraphics[width = \textwidth]{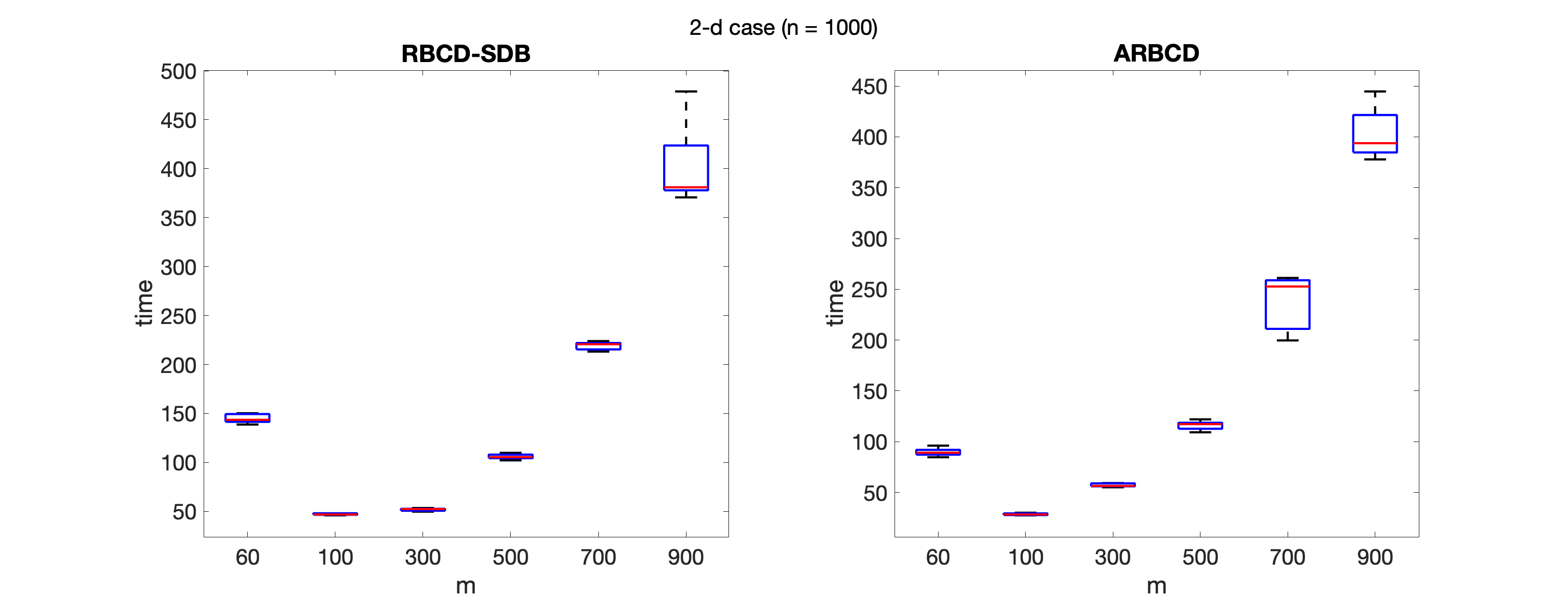}
    \includegraphics[width = \textwidth]{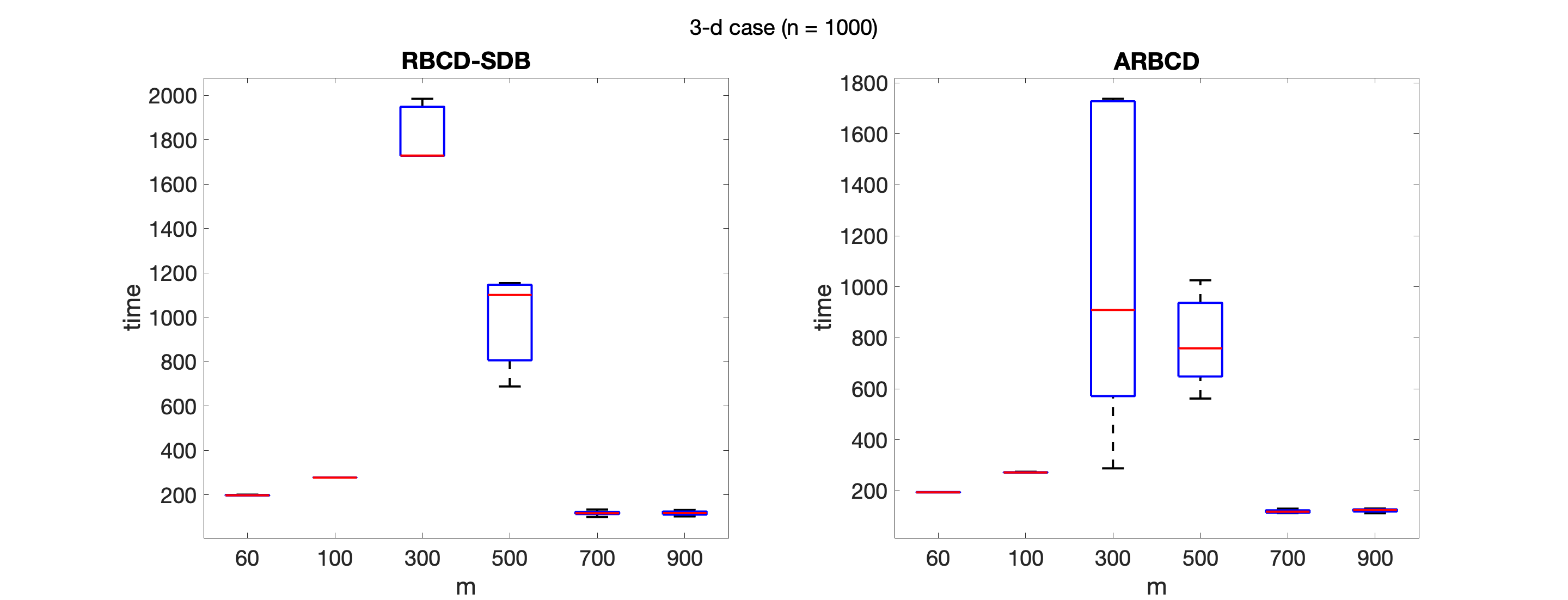}
    \end{center}
    \caption{Choice of $m$ in various dimensions}
    \footnotesize
    Numerical experiments from 1-d to 3-d cases. Solutions of accuracy level $f^* \times 10^{-3}$. Repetition is 5 and the box plot of time is shown. $p = \lfloor m^2/n \rfloor$.
    \label{fig: m2}
\end{figure}

\subsection{Comparison between {\arbcd} and \rev{algorithms in the literature}}\label{subsec: sink}

\paragraph{Experiment settings} We generated 8 pairs of distributions/patterns based on synthetic and real datasets. Descriptions are as follows. Note that we use histogram settings (c.f. Section~\ref{sec:WdistanceOT}) for datasets 1 and 2, and point cloud settings (c.f. Section~\ref{sec:WdistanceOT}) for other datasets. \rev{We use cost function $C(x,y) = \| x - y \|^2$.}\\
{\bf Dataset 1}: Uniform distribution to standard normal distribution over $[-1,1]$. Similar to the 1-d case in Section~\ref{subsec: numerics-rbcds}. $n = 200.$ \\
{\bf Dataset 2}: Uniform distribution to a randomly shuffled standard normal distribution over $[-1,1]$.\footnote{Similar to the 1-d case in Section~\ref{subsec: numerics-rbcds}. We randomly shuffled the weights of the normal distribution histogram.} $n = 1000.$\\
{\bf Dataset 3}: Uniform distribution over $[-\pi,\pi]^2$ to an empirical invariant measure. Similar to the 2-d case in Section~\ref{subsec: numerics-rbcds}. $n = 1000.$\\
{\bf Dataset 4}: Distribution of $\sqrt{\Sigma}u$ to distribution of $2\sqrt{\Sigma}v - (1;1;1)$, where $\Sigma = \pmat{1 & 0.5 & 0.25 \\ 0.5 & 1 & 0.5 \\ 0.25 & 0.5 & 1 }$, $u$ and $v$ conform uniform distributions on $[0,1]^3$ and are independent. $n = 1000.$\\
{\bf Dataset 5}: Similar to Dataset 4, with $\Sigma = \pmat{1 & 0.8 & 0.64 \\ 0.8 & 1 & 0.8 \\ 0.64 & 0.8 & 1 }$. $n = 1000.$\\
{\bf Dataset 6}: Distribution of $\Sigma u$ to distribution of $\Sigma v$, where $\Sigma = \pmat{ 1 & 0 & 1 & 1 \\ 0 & 1 & 1 & -1 }^T$, $u$ conforms a uniform distribution on $[0,2\pi]^2$ and $v$ conforms a uniform distribution on $[-1,1]^2$. $n = 1000.$\\
{\bf Dataset 7}: Distribution of $\underbrace{(1;1;\hdots;1)^T}_{10} u$ to distribution of $(1;2;3;\hdots;10)^T v + (1;1;\hdots;1)^T$, where $u$ conforms uniform distribution over $[0,2\pi]$ and $v$ conforms uniform distribution over $[-1,1]$. $n = 1000.$\\
{\bf Dataset 8}: Distibution of a ``cylinder" to a "spiral", see Figure~\ref{fig: datasets}. $n = 1000.$\\
In all cases, we normalize the cost matrix $C$ such that its maximal element is $1$. For all cases, we use the {\texttt linprog} in {\texttt Matlab} to find a solution with high precision (dual-simplex, constraint tolerance 1e-9, optimality tolerance 1e-10). 

\begin{figure}[ht!]
    \begin{center}
    \includegraphics[width = .5\textwidth]{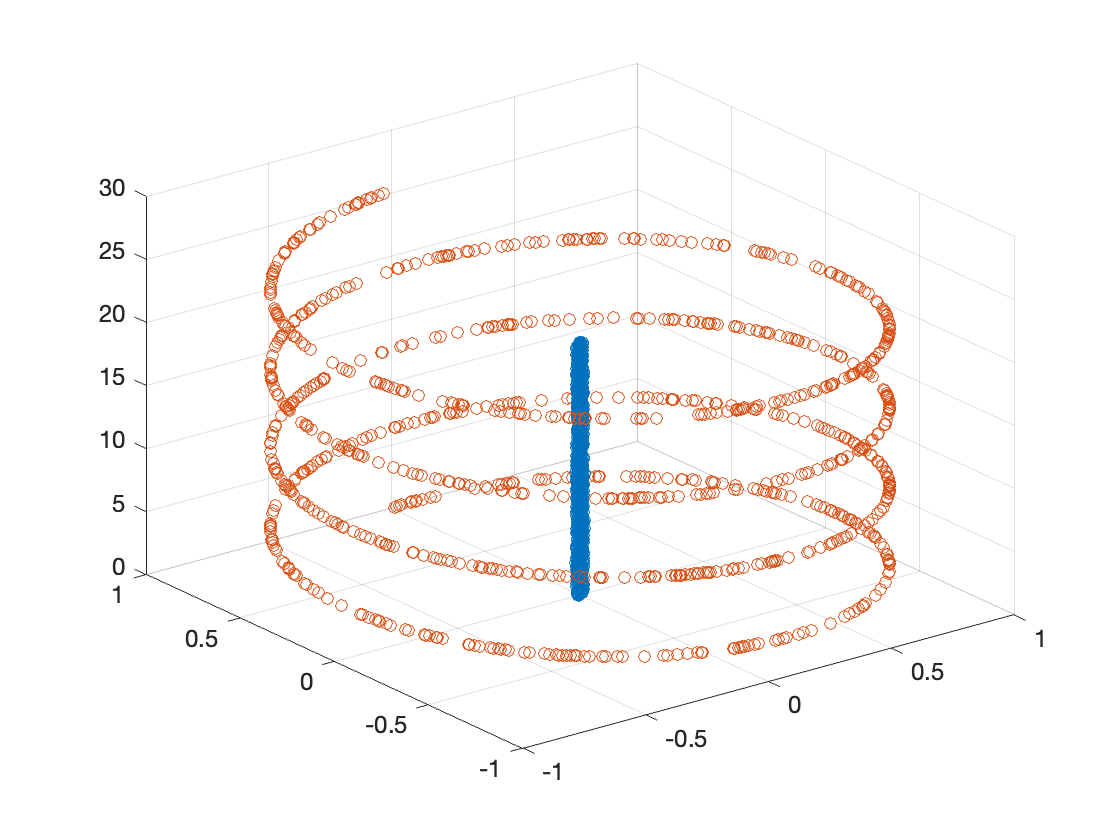}
    \end{center}
    \caption{Visualization of dataset 8}
    \footnotesize
    \label{fig: datasets}
\end{figure}

\subsubsection{Comparison with \sinkhorn}\label{subsec: sink}
\paragraph{Methods} Implementation of {\sinkhorn} and {\arbcd} are specified as follows.\\
\noindent{{\sinkhorn}.} The algorithm proposed in \cite{cuturi2013sinkhorn} to compute Wasserstein distance. Let $\gamma$ be the coefficient of the entropy term. We let $\gamma = \epsilon/(4 \log n)$ as suggested in \cite{dvurechensky2018computational}. We consider the settings $\epsilon = 10^{-4}, 10^{-3}, 0.01, 0.1$.  Iterations of {\sinkhorn} are projected onto the feasible region using a rounding procedure: Algorithm 2 in \cite{altschuler2017near}. Note that this projection step is added only for evaluation purposes because Sinkhorn does not provide feasible solutions if early stopped. It does not affect Sinkhorn's main steps or Sinkhorn's convergence at all. A similar approach is used for evaluation in \cite{jambulapati2019direct}. In addition, we take all the updates to log space and use the LogSumExp function to avoid numerical instability issues. We stop {\sinkhorn} after 300000 iterations when $n = 200$ and 100000 iterations if $n = 1000$.\\
\noindent{{\arbcd}.} Algorithm~\ref{alg:arbcd}: Accelerated random block coordinate descent. Let $m = 40$ when $n = 200$ and $m = 100$ when $n = 1000$. Let $p = \lfloor m^2/n \rfloor$, $s = 0.1$ and $T = 10$. Stop the algorithm after $10000$ iterations. To be fair, we also project the solution in each iteration onto the feasible region via the rounding procedure. LP subproblems are solved via {\texttt linprog} in {\texttt Matlab} with high precision (dual-simplex, constraint tolerance 1e-9).

\begin{figure}[ht!]
    \begin{center}
    \includegraphics[width = .3\textwidth]{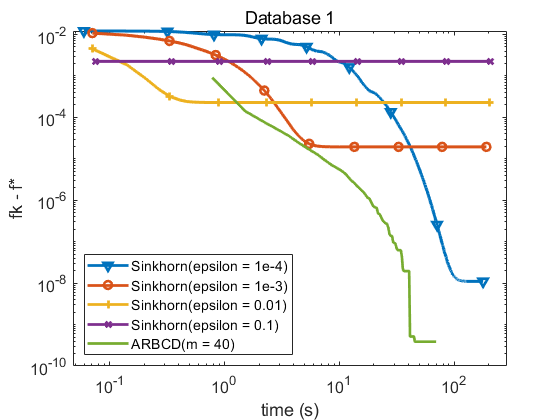}
    \includegraphics[width = .3\textwidth]{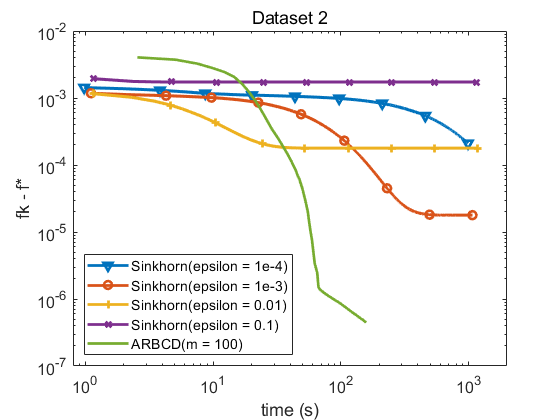}
    \includegraphics[width = .3\textwidth]{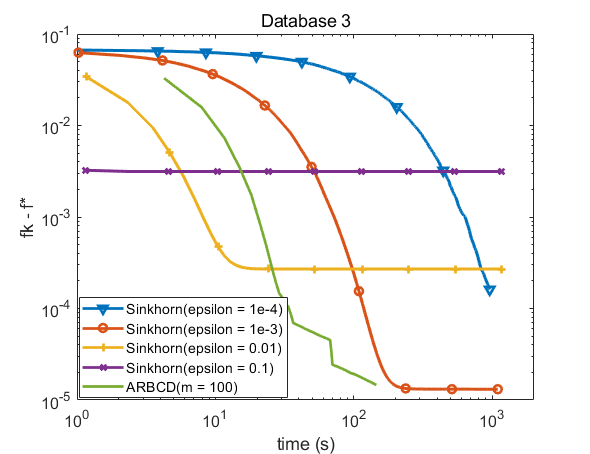}
    \includegraphics[width = .3\textwidth]{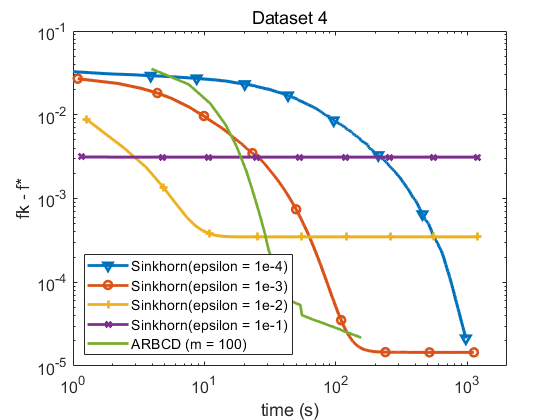}
    \includegraphics[width = .3\textwidth]{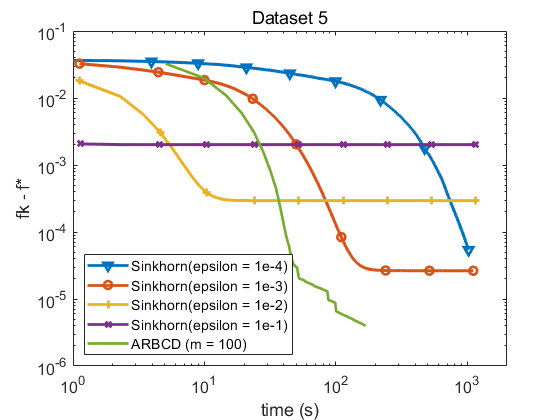}
    \includegraphics[width = .3\textwidth]{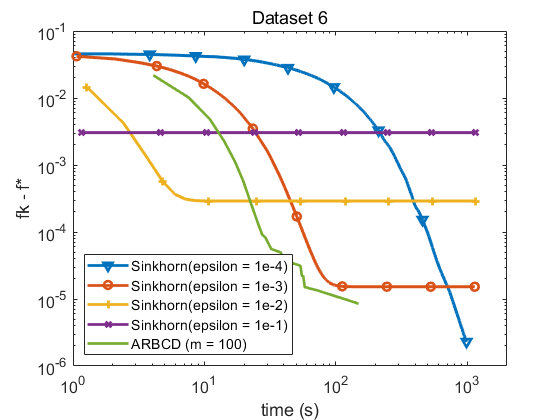}
    \includegraphics[width = .3\textwidth]{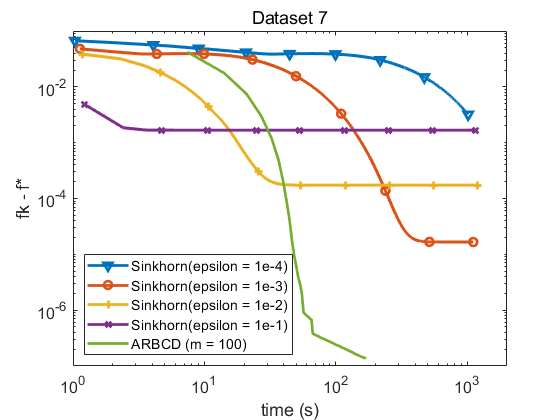}
    \includegraphics[width = .3\textwidth]{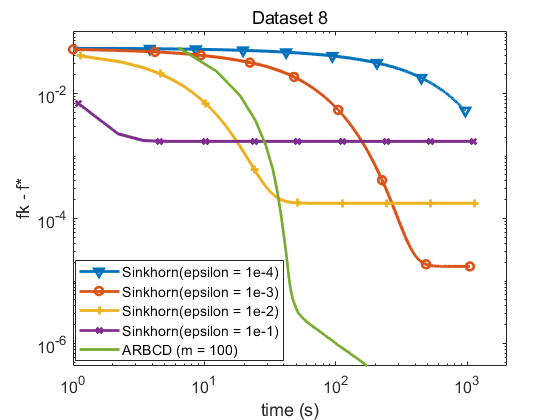}
    \end{center}
    \caption{Comparison of algorithms to compute Wasserstein distance II}
    \footnotesize
    X-axis is the wall-clock time in seconds. Y-axis is the optimality gap $f_k - f^* = c^T x^k - c^T x^*$.  This figure shows the trajectory/progress of Algorithm~\ref{alg:arbcd}: {\arbcd} and {\sinkhorn} with different settings when computing the Wasserstein distance between eight pairs of probability. {\arbcd} is run 5 times in each experiment and the curves showcase the average behavior.
    \label{fig: traj2}
\end{figure}

\begin{figure}[ht!]
    \begin{center}
    \includegraphics[width = 1\textwidth]{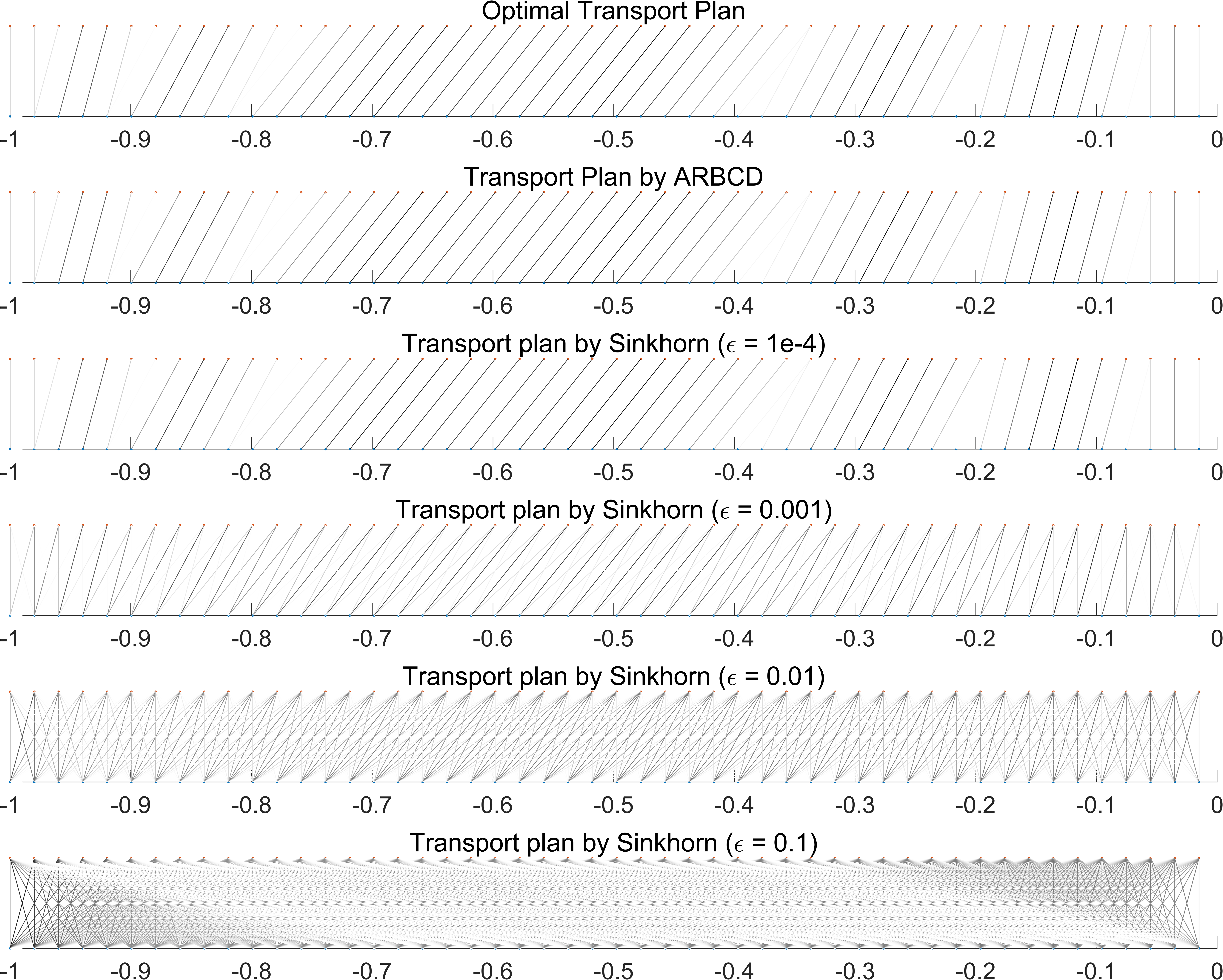}
    \end{center}
    \caption{Transport plan given by different algorithms (dataset 1)}
    \footnotesize
    This figure shows the transport plan for dataset 1. In each plot, the bottom distribution is uniform and the top is standard normal. Each line segment in between represents mass transported between a pair of points. The darker the line is, the more mass is transported. To plot the plans more clearly, we select every other mass point from $-1$ to $0$ (so only include $50$ mass points). The overall transport plans from $-1$ to $1$ are symmetric plots so we show them in this way due to presentation clarity.
    \label{fig: optplan}
\end{figure}

\begin{table}[ht!]
\footnotesize
  \begin{center}
  \begin{tabular}{ccccccc}
    \toprule
  Dataset $\#$ & method & time(s) & iter. & gap & feas.err. & subproblem size 
  \\
  \hline
   & \arbcd & 82.63 & 2326.7 & 4.9386e-07 & 4.1753e-17 & 22500 \\
  & \ipm$_1$ & 7.539 & 198  & 3.36713-08 & 1.8924e-17 & 20890 \\
  1 & \ipm$_2$ & 342.6 & 2000 & 4.3408-04 & 2.0889e-17 & 96550 \\
  & \ipm$_3$ & 5.044 & 19 & 1.2319-07 & 5.7525e-17 & 485194 \\
  & \ipm$_4$ & 11.01 & 21 & 2.8982e-07 & 8.3340e-17 & 1000000 \\
  \hline
  & \arbcd & 240.3 & 10000 & 8.6011e-09 & 8.4518e-17 & 22500 \\
  & \ipm$_1$ & 413.9 & 2000  & 1.0562-04 & 2.5704e-17 & 20890 \\
  2 & \ipm$_2$ & 0.8815 & 16 & 1.1066-10 & 2.6584e-17 & 96550 \\
  & \ipm$_3$ & 5.923 & 20 & 1.7153-09 & 5.5638e-17 & 485194 \\
  & \ipm$_4$ & 11.02 & 20 & 3.0871e-09 & 7.6026e-17 & 1000000 \\
  \hline
  & \arbcd & 30.80 & 375 & 1.2787e-04 & 2.3256e-17 & 22500 \\
  & \ipm$_1$ & N/A & N/A  & N/A & N/A & 21055 \\
  3 & \ipm$_2$ & 67.43 & 2000 & 0.0146 & 2.1107e-17 & 92666 \\
  & \ipm$_3$ & 313.7 & 2000 & 0.0110 & 1.3510e-17 & 465075 \\
  & \ipm$_4$ & 6.696 & 13 & 1.2655e-04 & 6.3473e-17 & 1000000 \\
  \hline
  & \arbcd & 145.2 & 4404.3 & 2.0519e-05 & 1.8927e-17 & 22500 \\
  & \ipm$_1$ & N/A & N/A  & N/A & N/A & 21312 \\
  4 & \ipm$_2$ & N/A & N/A  & N/A & N/A & 95639 \\
  & \ipm$_3$ & 345.7 & 2000 & 0.0083 & 1.5114e-17 & 474798 \\
  & \ipm$_4$ & 7.716 & 15 & 2.0397e-05 & 7.2654e-17 & 1000000 \\
  \hline
  & \arbcd & 56.24 & 569.3 & 2.0412e-05 & 2.1927e-17 & 22500 \\
  & \ipm$_1$ & N/A & N/A  & N/A & N/A & 19988 \\
  5 & \ipm$_2$ & N/A & N/A  & N/A & N/A & 94468 \\
  & \ipm$_3$ & 320.6 & 2000 & 0.0050 & 1.4802e-17 & 492856 \\
  & \ipm$_4$ & 7.616 & 15 & 8.6357e-06 & 6.6523e-17 & 1000000 \\
  \hline
  & \arbcd & 22.20 & 265 & 2.3475e-04 & 2.3622e-17 & 22500 \\
  & \ipm$_1$ & N/A & N/A  & N/A & N/A & 20143 \\
  6 & \ipm$_2$ & 417.6 & 2000 & 0.0032 & 1.4814e-17 & 94072 \\
  & \ipm$_3$ & 337.9 & 2000 & 0.0065 & 1.3566e-17 & 482269 \\
  & \ipm$_4$ & 5.438 & 11 & 1.4044e-04 & 6.2801e-17 & 1000000 \\
  \hline
  & \arbcd & 63.88 & 342.3 & 7.4215e-05 & 2.0341e-17 & 22500 \\
  & \ipm$_1$ & N/A & N/A  & N/A & N/A & 20139 \\
  7 & \ipm$_2$ & N/A & N/A & N/A & N/A & 99792 \\
  & \ipm$_3$ & 278.4 & 2000 & 0.0221 & 1.9256e-17 & 486651 \\
  & \ipm$_4$ & 5.136 & 11 & 5.7172e-05 & 5.5542e-17 & 1000000 \\
  \hline
  & \arbcd & 71.44 & 439.3 & 1.9715e-05 & 2.4507e-17 & 22500 \\
  & \ipm$_1$ & N/A & N/A  & N/A & N/A & 10702 \\
  8 & \ipm$_2$ & N/A & N/A  & N/A & N/A & 97305 \\
  & \ipm$_3$ & 219.1 & 2000 & 9.3439e-04 & 2.3205e-17 & 499786 \\
  & \ipm$_4$ & 5.303 & 10 & 1.7165e-05 & 7.5243e-17 & 1000000 \\
    \bottomrule
  \end{tabular}
  \end{center}
\caption{Comparison between {\arbcd} and \ipm} \label{tab: vsipm}
For {\arbcd}, the average iterations/time/gap/feasibility error are recorded for reference. For {\ipm}, results of four different settings are recorded. The initial reduced problem size is reflected by the subproblem size column.
\end{table}

\paragraph{Comments on Figure~\ref{fig: traj2}} We can observe the following from Figure~\ref{fig: traj2}: although {\sinkhorn} with larger $\epsilon$ may converge fast, the solution accuracy is also lower. In fact, this is true for all Sinkhorn-based algorithms because the optimization problem is not exact - it has an extra entropy term. Therefore, the larger $\gamma$ or $\epsilon$ is chosen, the less accurate the solution becomes. The discrepancy in accuracy does matter, as can be seen by inspecting the solution quality in Figure~\ref{fig: optplan}. On the other hand, when $\epsilon$ is set smaller, the convergence of {\sinkhorn} becomes slower. As can be seen from the plots, when $\epsilon = 0.1$ or $0.01$, {\sinkhorn} converges faster than {\arbcd}; when $\epsilon = 10^{-3}$, {\sinkhorn} is comparable to {\arbcd}; when $\epsilon = 10^{-4}$, {\sinkhorn} is slower than {\arbcd}. In conclusion, if relatively higher precision is desired, {\arbcd} is comparable with {\sinkhorn}. Moreover, note that here we solve the subproblems in {\arbcd} using {\texttt Matlab} built-in solver {\texttt linprog}. {\arbcd} can be faster if more efficient subproblem solvers are applied.

\subsubsection{Comparison with an interior point-inspired algorithm}\label{subsec: ipm}
In this experiment we continue using the 8 datasets, except that Dataset 1 has $n = 1000$.
\paragraph{Methods} \rev{Implementation of {\arbcd} and {\ipm} are specified as follows.\\
{\arbcd.} Algorithm~\ref{alg:arbcd}. Let $m = 150$, $p = \lfloor m^2/n \rfloor$, $s = 0.1$ and $T = 10$. We stop the algorithm when $(f_k - f^*)/f^* \le 10^{-3}$ or when iteration reaches the maximum 10000. Other settings are similar to the experiment in Section~\ref{subsec: sink}. The algorithm outputs the last iterate.\\
{\bf IPM}. A recent interior point-inspired algorithm proposed in \cite{zanetti2023interior}. The Matlab code is directly from the github page in this paper (https://github.com/INFORMSJoC/2022.0184). We have made minimal modification for comparison. We stop the algorithm when $(f_k - f^*)/f^* \le 10^{-3}$ or when the iteration hits 2000. Run four different settings separately, where the initial reduced problem's sizes are approximately $11(2n-1)$, $n^2/10$, $n^2/2$ and $n^2$ ($n = 1000$) correspondingly. For fairness, we also project the solution in each iteration onto the feasible region via the rounding procedure. The algorithm outputs the solution with the best function value gap.}

\paragraph{Comments on Table~\ref{tab: vsipm}} \rev{We could see that the performance of {\arbcd} is better than \ipm$_1$, comparable to {\ipm}$_2$ and {\ipm}$_3$. \ipm$_4$ has the best performance but its subproblem size is also large. In fact, our observation of the memory storage is \ipm$_4$ $\ge$ \ipm$_3$ $\ge$ \ipm$_2$ $\ge$ \ipm$_1$ $\approx$ {\arbcd}. This is consistent with the subproblem size column. In practice, memory consumption of \ipm$_4$ is large, contradicting the goal of the authors in \cite{zanetti2023interior} to save memory. In \cite{zanetti2023interior}, the authors suggest the setting of \ipm$_1$. We also find that when implementing \ipm$_1$,\ipm$_2$,\ipm$_3$, the algorithms develop numerical instabilities, which is reported as N/A in the table. 
This is consistent with the result in \cite{zanetti2023interior} (Figure~4) where ratio of the IPM algorithm finding an ideal solution is below $1$.}

\begin{figure}[ht!]
    \begin{center}
    \includegraphics[width = 1\textwidth]{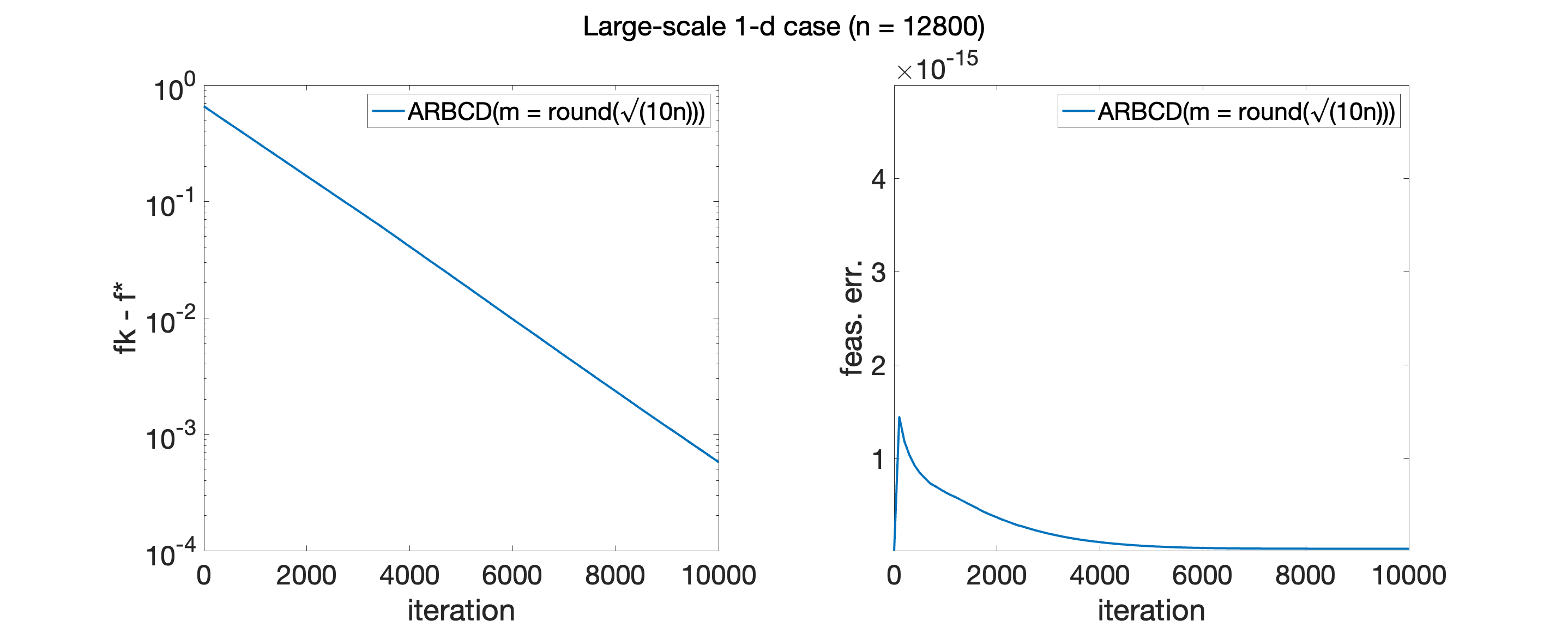}
    \end{center}
    \footnotesize
    \caption{Solving a large-scale problem via {\arbcd}}
    We apply {\arbcd} to solve the large-scale 1-d problem ($n = 12800$). \rev{y-axis of the left plot shows the optimality gap $f_k - f^*$, y-axis of the right plot shows the feasibility error $\| X^k \cdot {\bf 1} - r^1 \| + \| (X^k)^T \cdot {\bf 1} - r^2 \|$, and x-axis records the iteration number.} {\arbcd} is repeated for 3 times and average results are reported. 
    \label{fig: largescale}
\end{figure}

\subsection{Test on a large-scale OT problem}\label{subsec: large}
In this subsection, we generate a pair of 1-dim probability distributions with large discrete support sets ($n = 12800$). For the first distribution, locations of the discrete support ($x^i,i=1,\hdots,n$) are evenly aligned between $[-1,1]$, and their weights/probability are uniformly distributed (i.e., $1/n$). For the other distribution, locations of the discrete support are determined as $\tilde x^i = x^{\sigma(i)} + u^i$, where $\sigma(i)$ is a random permutation of $ i = 1,\hdots, n$, and $u^i$ is a random variable that conforms to a uniform distribution over $[-0.5,0.5]$. Weights/probability are determined as $w_i = \frac{\phi(\tilde x^i)}{\sum_{i=1}^n \phi(\tilde x^i)}$, where $\phi(x)$ is the pdf of the standard normal. The benchmark optimal solution is quickly computed via a closed-form formula for 1-d OT problem ($f^* = 6.236 \times 10^{-3}$). For {\arbcd}, we use the setting $m = \lceil \sqrt{10 n} \rceil$, $p = \lfloor m^2/n \rfloor$, $s = 0.1$ and $T = 10$.
\paragraph{Comments on Figure~\ref{fig: largescale}} The figure showcases the average behavior of {\arbcd} within $10000$ iterations. It is able to locate a solution such that $(f_k - f^*)/f^* \le 0.1$. The convergence is linear by observing the trajectory. We also want to point out that Gurobi 10.01 (academic license) runs out of memory on the desktop we use for numerical experiments. Indeed, saving memory is one of the merits that motivate us to consider RBCD methods.

\section{Conclusion}\label{sec:conclusion}

In this paper, we investigate the RBCD method to solve LP problems, including OT problems. In particular, an expected Gauss-Southwell-$q$ rule is proposed to select the working set $\sI_k$ at iteration $k$. It guarantees almost sure convergence and linear convergence rate and is satisfied by all algorithms proposed in this work. We first develop a vanilla RBCD, called {\rbcd}, to solve general LP problems. Its inexact version is also investigated. Then, by examining the structure of the matrix $A$ in the linear system of OT, characterizing elementary matrices of $\nul(A)$ and identifying conformal realization of any matrix $D \in \nul(A)$, we refine the working set selection. We use two approaches - diagonal band and submatrix - for constructing $\sI_k$ and employ an acceleration technique inspired by the momentum concept to improve the performance of {\rbcd}. In our numerical experiments, we compare all proposed RBCD methods and verify the acceleration effects as well as the sparsity of solutions. We also demonstrate the gap between the theoretical convergence rate and the practical one. We discuss the choice of the subproblem size. Furthermore, we run {\arbcd}, the best among all other methods, against others and also on large-scale OT problems. The results show the advantages of our method in finding relatively accurate solutions to OT problems and saving memory. For future work, we plan to extend our method to handle continuous measures and further improve it through parallelization and multiscale strategies, among other approaches.

\section*{Acknowledgements}
\noindent
The research of YX is supported by Guangdong Province Fundamental and Applied Fundamental Research Regional Joint Fund, (Project 2022B1515130009), and the start-up fund of HKU-IDS. The research of ZZ is supported by Hong Kong RGC grants (Projects 17300318 and 17307921), the National Natural Science Foundation of China  (Project 12171406), Seed Fund for Basic Research (HKU), an R\&D Funding Scheme from the HKU-SCF FinTech Academy, and Seed Funding for Strategic Interdisciplinary Research Scheme 2021/22 (HKU). 

\section*{Declarations}
The authors have no competing interests to declare that are relevant to the content of this article.

\section*{Data availability}
The datasets generated during the current study are available in the GitHub repository, \\
\texttt{https://github.com/yue-xie/RBCDforOT}.

\appendix

\section{Proof of Lemma~\ref{lm: prange}}

\begin{proof} 
Suppose that
\begin{align}\label{ineq1: pr}
    \log((n^2)!/(n^2 - np)! ) \ge 2 \log(n!) + \log(np)!
\end{align}
Then
\begin{align*}
\begin{array}{rrl}
& (n^2)!/(n^2 - np)! )/(np)! & \ge (n!)^2 \\
    \implies & \binom{n^2}{np} / (n!)^2 & \ge 1 \\
    \implies & \frac{\binom{n^2}{np}}{(n!)^2} \cdot \frac{ n(p-2)(n^2- np +1) }{ n^2 - 3 } & \ge 1 \\
    \implies & \frac{n(p-2)}{(n^2 - 3)(n!)^2} & \ge \frac{1}{\binom{n^2}{np}(n^2 - np +1)},
\end{array}
\end{align*}
where the third inequality holds because $p \le n/2$ and $n \ge p \bar K \ge 6$. So we only need to prove \eqref{ineq1: pr}. Note that
\begin{align}
\notag
    \log \frac{(n^2)!}{(n^2 - np)!}  & = \sum_{x = n^2 - np + 1}^{n^2} \log(x) \ge \int_{n^2 - np}^{n^2} (\log x ) dx \\
    \notag
    & = n^2 \log(n^2) - n^2 - \left( (n^2-np)\log(n^2 - np) - n^2 + np\right) \\
    \notag
    & = n^2 \log(n^2) - (n^2 - np) \log(n^2 - np) - np \\
     \notag
    & \overset{ (p = n/K) }{ = } 2np\log n + \frac{K-1}{K} \cdot n^2 \cdot \log\frac{K}{K-1} - np \\
    \label{ineq2: pr}
    & \ge 2np\log n + \frac{2K-3}{2K-2} \cdot np - np.
\end{align}
The last inequality holds because $\log (1+x) \ge x - x^2/2$ for $ x \in (0,1)$ and $p = n / K$. Meanwhile, right hand side of \eqref{ineq1: pr} satisfies the following:
\begin{align}
\notag
    & \quad 2 \log(n!) + \log(np)! \\
    \notag
    & \le 2(n+1) \log(n+1) - 2n + (np+1) \log (np + 1) - np \\
    \notag
    & \le 2(n+1) (\log n + \log 2) - 2n + (np+1) (\log (np) + \log 2) - np \\
    \notag
    & = (np + 2n + 3) \log n + (np+1) \log p + 2(n+1) \log 2 - 2n - np + (\log 2)(np+1) \\
    \notag
    & \overset{\left( p = \frac{n}{K} \right)}{=} 2np \log n + (2n+4) \log n + (\log 4) n + (\log 2)np + \log 8 - 2n - (1+ \log K)np - \log K \\
    \label{ineq3: pr}
    & \overset{ (K \ge \bar K \ge 2, n \ge 6) }{\le} 2np \log n + (2n+4) \log n + (\log 2)np - (1+ \log K)np 
\end{align}
In order to show \eqref{ineq1: pr}, we only need to confirm $\eqref{ineq3: pr} \le \eqref{ineq2: pr}$. By observation, this is equivalent to
\begin{align*}
\begin{array}{crl}
    & \left( \frac{2K-3}{2K-2} + \log \left( \frac{K}{2} \right) \right) np & \ge (2n + 4) \log n \\
    \overset{(p \ge \eta \log n, \bar K \le K)}{\Longleftarrow} & \left( \frac{2 \bar K-3}{2 \bar K-2} + \log \left( \frac{\bar K}{2} \right) \right) \eta n & \ge 2n + 4 \\
    \Longleftrightarrow &  \frac{4}{\left( \frac{2 \bar K-3}{2 \bar K-2} + \log \left( \frac{\bar K}{2} \right) \right) \eta - 2} & \le n.
    \end{array}
\end{align*}
The last inequality is assumed.\qed
\end{proof}

\section{A counterexample of interest}\label{app: counterex}
\begin{exmp}
Consider LP problem \eqref{LP-OT} with $n = 3$, $r^1 = r^2 = (1/3,1/3,1/3)^T$. Let $$C = \pmat{ (1+\epsilon_1)^2 & (2- \epsilon_3)^2 & (1-\epsilon_2)^2 \\ (1- \epsilon_2)^2 & (1 + \epsilon_1)^2 & (2 - \epsilon_3)^2 \\ (2 - \epsilon_3)^2 & (1 - \epsilon_2)^2 & (1 + \epsilon_1)^2 },$$ where $0 < \epsilon_i \ll 1$, $i = 1,2,3$ such that $2 ( 1 + \epsilon_1 )^2 < (1 - \epsilon_2)^2 + (2 - \epsilon_3)^2$. It can be easily seen that the optimal solution is $\gamma^* = \frac{1}{3}\pmat{ 0&0&1\\1&0&0\\0&1&0 }$. Suppose that $\gamma^0 = \frac{1}{3}\pmat{1&0&0\\0&1&0\\0&0&1}$. If we use the submatrix approach \eqref{submat} with $m = 2$ (largest number less than $n$) to select a working set $\sI_k$, then the algorithm will be stuck at $\gamma^0$. It is not globally convergent. This cost matrix corresponds to the following case of transporting a three-point distribution to another one:
\begin{figure}[H]
\begin{center}
\begin{tikzpicture}[scale=1.7]

\draw[dashed, thick] (0,0) -- (1,0);
\draw[dashed, thick] (1,0) -- (1.5,1.732/2);
\draw[dashed, thick] (1.5,1.732/2) -- (1,1.732);
\draw[dashed, thick] (1,1.732) -- (0,1.732);
\draw[dashed, thick] (0,1.732) -- (-0.5,1.732/2);
\draw[dashed, thick] (-0.5,1.732/2) -- (0,0);


\node [left] at (-0.5,1.732/2) {$\tilde{y}_1$};
\node [above] at (0.2,1.732) {$y_1$};
\node [above] at (1,1.732) {$\tilde{y}_2$};
\node [right] at (1.4,1.732/2.5) {$y_2$};
\node [right] at (1,0) {$\tilde{y}_3$};
\node [left] at (-0.1,0.1732) {$y_3$};
\draw [fill=green ,black](-0.1,0.1732) circle(0.6mm);

\draw [fill=green ,black](1,0) circle(0.6mm);

\draw [fill=green ,black](-0.5,1.732/2) circle(0.6mm);

\draw [fill=green ,black](0.2,1.732) circle(0.6mm);
\draw [fill=green ,black](1,1.732) circle(0.6mm);
\draw [fill=green ,black](1.4,1.732/2.5) circle(0.6mm);
\end{tikzpicture}
\end{center}
\caption{A counterexample of interest}
The dashed hexagon has an edge length of $1$. Transport point distribution $\tilde y_1$, $\tilde y_2$ and $\tilde y_3$ to that of $y_1$, $y_2$ and $y_3$.
\end{figure}
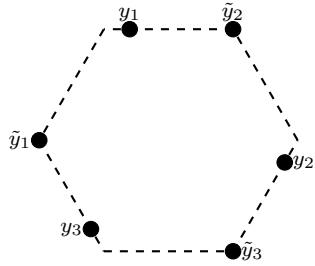
\end{exmp}


%
%

\bibliographystyle{spmpsci}      
\bibliography{ref}   

\begin{thebibliography}{10}
\providecommand{\url}[1]{{#1}}
\providecommand{\urlprefix}{URL }
\expandafter\ifx\csname urlstyle\endcsname\relax
  \providecommand{\doi}[1]{DOI~\discretionary{}{}{}#1}\else
  \providecommand{\doi}{DOI~\discretionary{}{}{}\begingroup
  \urlstyle{rm}\Url}\fi

\bibitem{altschuler2017near}
Altschuler, J., Niles-Weed, J., Rigollet, P.: Near-linear time approximation
  algorithms for optimal transport via {S}inkhorn iteration.
\newblock Advances in Neural Information Processing Systems \textbf{30} (2017)

\bibitem{arjovsky2017wasserstein}
Arjovsky, M., Chintala, S., Bottou, L.: Wasserstein generative adversarial
  networks.
\newblock In: International Conference on Machine Learning, pp. 214--223. PMLR
  (2017)

\bibitem{beck20142}
Beck, A.: The 2-coordinate descent method for solving double-sided simplex
  constrained minimization problems.
\newblock Journal of Optimization Theory and Applications \textbf{162}(3),
  892--919 (2014)

\bibitem{beck2013convergence}
Beck, A., Tetruashvili, L.: On the convergence of block coordinate descent type
  methods.
\newblock SIAM Journal on Optimization \textbf{23}(4), 2037--2060 (2013)

\bibitem{benamou2000computational}
Benamou, J., Brenier, Y.: A computational fluid mechanics solution to the
  {M}onge-{K}antorovich mass transfer problem.
\newblock Numerische Mathematik \textbf{84}(3), 375--393 (2000)

\bibitem{benamou2015iterative}
Benamou, J.D., Carlier, G., Cuturi, M., Nenna, L., Peyr{\'e}, G.: Iterative
  bregman projections for regularized transportation problems.
\newblock SIAM Journal on Scientific Computing \textbf{37}(2), A1111--A1138
  (2015)

\bibitem{benamou2016monotone}
Benamou, J.D., Collino, F., Mirebeau, J.M.: Monotone and consistent
  discretization of the {M}onge--{A}mp{\`e}re operator.
\newblock Mathematics of computation \textbf{85}(302), 2743--2775 (2016)

\bibitem{benamou2014numerical}
Benamou, J.D., Froese, B.D., Oberman, A.M.: Numerical solution of the optimal
  transportation problem using the {M}onge--{A}mp{\`e}re equation.
\newblock Journal of Computational Physics \textbf{260}, 107--126 (2014)

\bibitem{berahas2020investigation}
Berahas, A.S., Bollapragada, R., Nocedal, J.: An investigation of
  {N}ewton-sketch and subsampled {N}ewton methods.
\newblock Optimization Methods and Software \textbf{35}(4), 661--680 (2020)

\bibitem{blondel2018smooth}
Blondel, M., Seguy, V., Rolet, A.: Smooth and sparse optimal transport.
\newblock In: International Conference on Artificial Intelligence and
  Statistics, pp. 880--889. PMLR (2018)

\bibitem{bonafini2021domain}
Bonafini, M., Schmitzer, B.: Domain decomposition for entropy regularized
  optimal transport.
\newblock Numerische Mathematik \textbf{149}(4), 819--870 (2021)

\bibitem{brenier1991polar}
Brenier, Y.: Polar factorization and monotone rearrangement of vector-valued
  functions.
\newblock Communications on Pure and Applied Mathematics \textbf{44}(4),
  375--417 (1991)

\bibitem{chen2016direct}
Chen, C., He, B., Ye, Y., Yuan, X.: The direct extension of {ADMM} for
  multi-block convex minimization problems is not necessarily convergent.
\newblock Mathematical Programming \textbf{155}(1), 57--79 (2016)

\bibitem{chizat2018scaling}
Chizat, L., Peyr{\'e}, G., Schmitzer, B., Vialard, F.X.: Scaling algorithms for
  unbalanced optimal transport problems.
\newblock Mathematics of Computation \textbf{87}(314), 2563--2609 (2018)

\bibitem{cuturi2013sinkhorn}
Cuturi, M.: {S}inkhorn distances: Lightspeed computation of optimal transport.
\newblock Advances in Neural Information Processing Systems \textbf{26} (2013)

\bibitem{dvurechensky2018computational}
Dvurechensky, P., Gasnikov, A., Kroshnin, A.: Computational optimal transport:
  Complexity by accelerated gradient descent is better than by {S}inkhorn’s
  algorithm.
\newblock In: International Conference on Machine Learning, pp. 1367--1376.
  PMLR (2018)

\bibitem{facca2021fast}
Facca, E., Benzi, M.: Fast iterative solution of the optimal transport problem
  on graphs.
\newblock SIAM Journal on Scientific Computing \textbf{43}(3), A2295--A2319
  (2021)

\bibitem{gasnikov2016efficient}
Gasnikov, A.V., Gasnikova, E., Nesterov, Y.E., Chernov, A.: Efficient numerical
  methods for entropy-linear programming problems.
\newblock Computational Mathematics and Mathematical Physics \textbf{56}(4),
  514--524 (2016)

\bibitem{genevay2016stochastic}
Genevay, A., Cuturi, M., Peyr{\'e}, G., Bach, F.: Stochastic optimization for
  large-scale optimal transport.
\newblock Advances in neural information processing systems \textbf{29} (2016)

\bibitem{pmlr-v84-genevay18a}
Genevay, A., Peyre, G., Cuturi, M.: Learning generative models with sinkhorn
  divergences.
\newblock In: A.~Storkey, F.~Perez-Cruz (eds.) Proceedings of the Twenty-First
  International Conference on Artificial Intelligence and Statistics,
  \emph{Proceedings of Machine Learning Research}, vol.~84, pp. 1608--1617.
  PMLR (2018)

\bibitem{gerber2017multiscale}
Gerber, S., Maggioni, M.: Multiscale strategies for computing optimal
  transport.
\newblock Journal of Machine Learning Research \textbf{18} (2017)

\bibitem{gottschlich2014shortlist}
Gottschlich, C., Schuhmacher, D.: The shortlist method for fast computation of
  the earth mover's distance and finding optimal solutions to transportation
  problems.
\newblock PloS one \textbf{9}(10), e110214 (2014)

\bibitem{guminov2021combination}
Guminov, S., Dvurechensky, P., Tupitsa, N., Gasnikov, A.: On a combination of
  alternating minimization and {N}esterov’s momentum.
\newblock In: International Conference on Machine Learning, pp. 3886--3898.
  PMLR (2021)

\bibitem{gurbuzbalaban2017cyclic}
Gurbuzbalaban, M., Ozdaglar, A., Parrilo, P.A., Vanli, N.: When cyclic
  coordinate descent outperforms randomized coordinate descent.
\newblock Advances in Neural Information Processing Systems \textbf{30} (2017)

\bibitem{haker2004optimal}
Haker, S., Zhu, L., Tannenbaum, A., Angenent, S.: Optimal mass transport for
  registration and warping.
\newblock International Journal of Computer Vision \textbf{60}(3), 225--240
  (2004)

\bibitem{he20121}
He, B., Yuan, X.: On the $\mathcal{O}(1/n)$ convergence rate of the
  {Douglas--Rachford} alternating direction method.
\newblock SIAM Journal on Numerical Analysis \textbf{50}(2), 700--709 (2012)

\bibitem{huang2021riemannian}
Huang, M., Ma, S., Lai, L.: A {R}iemannian block coordinate descent method for
  computing the projection robust {W}asserstein distance.
\newblock In: International Conference on Machine Learning, pp. 4446--4455.
  PMLR (2021)

\bibitem{jambulapati2019direct}
Jambulapati, A., Sidford, A., Tian, K.: A direct $\tilde{O}(1/\epsilon)$
  iteration parallel algorithm for optimal transport.
\newblock Advances in Neural Information Processing Systems \textbf{32} (2019)

\bibitem{jordan1998variational}
Jordan, R., Kinderlehrer, D., Otto, F.: The variational formulation of the
  {F}okker--{P}lanck equation.
\newblock SIAM Journal on Mathematical Analysis \textbf{29}(1), 1--17 (1998)

\bibitem{lei2019geometric}
Lei, N., Su, K., Cui, L., Yau, S.T., Gu, X.D.: A geometric view of optimal
  transportation and generative model.
\newblock Computer Aided Geometric Design \textbf{68}, 1--21 (2019)

\bibitem{li2018computations}
Li, W., Yin, P., Osher, S.: Computations of optimal transport distance with
  {F}isher information regularization.
\newblock Journal of Scientific Computing \textbf{75}(3), 1581--1595 (2018)

\bibitem{lin2022efficiency}
Lin, T., Ho, N., Jordan, M.I.: On the efficiency of entropic regularized
  algorithms for optimal transport.
\newblock Journal of Machine Learning Research \textbf{23}(137), 1--42 (2022)

\bibitem{ling2007efficient}
Ling, H., Okada, K.: An efficient earth mover's distance algorithm for robust
  histogram comparison.
\newblock IEEE transactions on pattern analysis and machine intelligence
  \textbf{29}(5), 840--853 (2007)

\bibitem{liu2022multiscale}
Liu, Y., Wen, Z., Yin, W.: A multiscale semi-smooth {N}ewton method for optimal
  transport.
\newblock Journal of Scientific Computing \textbf{91}(2), 39 (2022)

\bibitem{lu2015complexity}
Lu, Z., Xiao, L.: On the complexity analysis of randomized block-coordinate
  descent methods.
\newblock Mathematical Programming \textbf{152}(1), 615--642 (2015)

\bibitem{mandad2017variance}
Mandad, M., Cohen-Steiner, D., Kobbelt, L., Alliez, P., Desbrun, M.:
  Variance-minimizing transport plans for inter-surface mapping.
\newblock ACM Transactions on Graphics (ToG) \textbf{36}(4), 1--14 (2017)

\bibitem{natale2021computation}
Natale, A., Todeschi, G.: Computation of optimal transport with finite volumes.
\newblock ESAIM: Mathematical Modelling and Numerical Analysis \textbf{55}(5),
  1847--1871 (2021)

\bibitem{necoara2016parallel}
Necoara, I., Clipici, D.: Parallel random coordinate descent method for
  composite minimization: Convergence analysis and error bounds.
\newblock SIAM Journal on Optimization \textbf{26}(1), 197--226 (2016)

\bibitem{necoara2017random}
Necoara, I., Nesterov, Y., Glineur, F.: Random block coordinate descent methods
  for linearly constrained optimization over networks.
\newblock Journal of Optimization Theory and Applications \textbf{173}(1),
  227--254 (2017)

\bibitem{necoara2021randomized}
Necoara, I., Tak{\'a}{\v{c}}, M.: Randomized sketch descent methods for
  non-separable linearly constrained optimization.
\newblock IMA Journal of Numerical Analysis \textbf{41}(2), 1056--1092 (2021)

\bibitem{nesterov2012efficiency}
Nesterov, Y.: Efficiency of coordinate descent methods on huge-scale
  optimization problems.
\newblock SIAM Journal on Optimization \textbf{22}(2), 341--362 (2012)

\bibitem{otto2001geometry}
Otto, F.: The geometry of dissipative evolution equations: the porous medium
  equation.
\newblock Taylor \& Francis  (2001)

\bibitem{peleg1989unified}
Peleg, S., Werman, M., Rom, H.: A unified approach to the change of resolution:
  Space and gray-level.
\newblock IEEE Transactions on Pattern Analysis and Machine Intelligence
  \textbf{11}(7), 739--742 (1989)

\bibitem{perrot2016mapping}
Perrot, M., Courty, N., Flamary, R., Habrard, A.: Mapping estimation for
  discrete optimal transport.
\newblock Advances in Neural Information Processing Systems \textbf{29} (2016)

\bibitem{COTFNT}
Peyr\'e, G., Cuturi, M.: Computational optimal transport.
\newblock Foundations and Trends in Machine Learning \textbf{11}(5-6), 355--607
  (2019)

\bibitem{polyak1987introduction}
Polyak, B.T.: Introduction to optimization.
\newblock Optimization Software, Inc., Publications Division, New York  (1987)

\bibitem{qu2016sdna}
Qu, Z., Richt{\'a}rik, P., Tak{\'a}c, M., Fercoq, O.: {SDNA}: stochastic dual
  {N}ewton ascent for empirical risk minimization.
\newblock In: International Conference on Machine Learning, pp. 1823--1832.
  PMLR (2016)

\bibitem{richtarik2014iteration}
Richt{\'a}rik, P., Tak{\'a}{\v{c}}, M.: Iteration complexity of randomized
  block-coordinate descent methods for minimizing a composite function.
\newblock Mathematical Programming \textbf{144}(1), 1--38 (2014)

\bibitem{richtarik2016parallel}
Richt{\'a}rik, P., Tak{\'a}{\v{c}}, M.: Parallel coordinate descent methods for
  big data optimization.
\newblock Mathematical Programming \textbf{156}(1), 433--484 (2016)

\bibitem{rockafellar1999network}
Rockafellar, R.T.: Network flows and monotropic optimization, vol.~9.
\newblock Athena scientific (1999)

\bibitem{schmitzer2016sparse}
Schmitzer, B.: A sparse multiscale algorithm for dense optimal transport.
\newblock Journal of Mathematical Imaging and Vision \textbf{56}, 238--259
  (2016)

\bibitem{schmitzer2019stabilized}
Schmitzer, B.: Stabilized sparse scaling algorithms for entropy regularized
  transport problems.
\newblock SIAM Journal on Scientific Computing \textbf{41}(3), A1443--A1481
  (2019)

\bibitem{sinkhorn1964relationship}
Sinkhorn, R.: A relationship between arbitrary positive matrices and doubly
  stochastic matrices.
\newblock The annals of mathematical statistics \textbf{35}(2), 876--879 (1964)

\bibitem{solomon2015convolutional}
Solomon, J., De~Goes, F., Peyr{\'e}, G., Cuturi, M., Butscher, A., Nguyen, A.,
  Du, T., Guibas, L.: Convolutional {W}asserstein distances: Efficient optimal
  transportation on geometric domains.
\newblock ACM Transactions on Graphics (ToG) \textbf{34}(4), 1--11 (2015)

\bibitem{sun2021worst}
Sun, R., Ye, Y.: Worst-case complexity of cyclic coordinate descent:
  $\mathcal{O}(n^2)$ gap with randomized version.
\newblock Mathematical Programming \textbf{185}(1), 487--520 (2021)

\bibitem{toselli2004domain}
Toselli, A., Widlund, O.: Domain decomposition methods-algorithms and theory,
  vol.~34.
\newblock Springer Science \& Business Media (2004)

\bibitem{tseng2009block}
Tseng, P., Yun, S.: Block-coordinate gradient descent method for linearly
  constrained nonsmooth separable optimization.
\newblock Journal of Optimization Theory and Applications \textbf{140}(3),
  513--535 (2009)

\bibitem{tseng2009coordinate}
Tseng, P., Yun, S.: A coordinate gradient descent method for nonsmooth
  separable minimization.
\newblock Mathematical Programming \textbf{117}(1), 387--423 (2009)

\bibitem{tseng2010coordinate}
Tseng, P., Yun, S.: A coordinate gradient descent method for linearly
  constrained smooth optimization and support vector machines training.
\newblock Computational Optimization and Applications \textbf{47}(2), 179--206
  (2010)

\bibitem{villani2021topics}
Villani, C.: Topics in optimal transportation, vol.~58.
\newblock American Math. Soc. (2021)

\bibitem{wang2022deepparticle}
Wang, Z., Xin, J., Zhang, Z.: Deep{P}article: learning invariant measure by a
  deep neural network minimizing {W}asserstein distance on data generated from
  an interacting particle method.
\newblock Journal of Computational Physics p. 111309 (2022)

\bibitem{wijesinghe2023matrix}
Wijesinghe, J., Chen, P.: Matrix balancing based interior point methods for
  point set matching problems.
\newblock SIAM Journal on Imaging Sciences \textbf{16}(3), 1068--1105 (2023)

\bibitem{wright1997primal}
Wright, S.: Primal-dual interior-point methods.
\newblock SIAM (1997)

\bibitem{xie2019si}
Xie, Y., Shanbhag, U.V.: {SI-ADMM}: A stochastic inexact {ADMM} framework for
  stochastic convex programs.
\newblock IEEE Transactions on Automatic Control \textbf{65}(6), 2355--2370
  (2019)

\bibitem{xie2021tractable}
Xie, Y., Shanbhag, U.V.: Tractable {ADMM} schemes for computing {KKT} points
  and local minimizers for $\ell_0$-minimization problems.
\newblock Computational Optimization and Applications \textbf{78}(1), 43--85
  (2021)

\bibitem{xie2020fast}
Xie, Y., Wang, X., Wang, R., Zha, H.: A fast proximal point method for
  computing exact {W}asserstein distance.
\newblock In: Uncertainty in artificial intelligence, pp. 433--453. PMLR (2020)

\bibitem{yang2021fast}
Yang, L., Li, J., Sun, D., Toh, K.C.: A fast globally linearly convergent
  algorithm for the computation of wasserstein barycenters.
\newblock The Journal of Machine Learning Research \textbf{22}(1), 984--1020
  (2021)

\bibitem{zanetti2023interior}
Zanetti, F., Gondzio, J.: An interior point--inspired algorithm for linear
  programs arising in discrete optimal transport.
\newblock INFORMS Journal on Computing  (2023)

\end{thebibliography}

%
%

\end{document}